\newtheorem{theorem}{Theorem}[section]
\newtheorem{lemma}[theorem]{Lemma}
\newtheorem{corollary}[theorem]{Corollary}
\newtheorem{question}{Question}[section]
\newtheorem{conjecture}{Conjecture}[section]
\newcommand{\fold}[1]{[#1]}
\newcommand{\foldzero}{[\overrightarrow{0}]}
\begin{document}

\title{A bridge between the minimal doubly resolving set problem in (folded) hypercubes and the coin weighing problem\thanks{Supported in part by National Natural Science Foundation of China (No. 11871222), Science and Technology Commission of Shanghai Municipality (Nos. 18dz2271000, 19JC1420100) and the Open Research Fund of Key Laboratory of Advanced Theory and Application in Statistics and Data Science-MOE, ECNU.}}
\author{Changhong Lu\ \ \ \  Qingjie Ye\thanks{Corresponding author}\\
School of Mathematical Sciences,\\
Shanghai Key Laboratory of PMMP,\\
East China Normal University,\\
Shanghai 200241, P. R. China\\
\\
Email: chlu@math.ecnu.edu.cn\\
Email: mathqjye@stu.ecnu.edu.cn}
\date{}
\maketitle

\begin{abstract}
    In this paper, we consider the minimal doubly resolving set problem in Hamming graphs, hypercubes and folded hypercubes. We prove that the minimal doubly resolving set problem in hypercubes is equivalent to the coin weighing problem. Then we answer an open question on the minimal doubly resolving set problem in hypercubes. We disprove a conjecture on the metric dimension problem in folded hypercubes and give some asymptotic results for the metric dimension and the minimal doubly resolving set problems in Hamming graphs and folded hypercubes by establishing connections between these problems. Using the Lindstr\"{o}m's method for the coin weighing problem, we give an efficient algorithm for the minimal doubly resolving set problem in hypercubes and report some new upper bounds. We also prove that the minimal doubly resolving set problem is NP-hard even restrict on split graphs, bipartite graphs and co-bipartite graphs.
\end{abstract}

{\small {\bf Keywords:} Metric dimension, Doubly resolving set, Coin weighing problem, Hypercube, Hamming graph}

\section{Introduction}
Let $G$ be a finite, connected, simple and undirected graph with vertex set $V=V(G)$ and edge set $E=E(G)$. The distance between vertices $u$ and $v$ is denoted by $d_G(u,v)$. The {\sl Cartesian product} of graphs $G$ and $H$, denoted by $G\square H$, where $V(G\square H)=\{(g,h):g\in V(G),h\in V(H)\}$, and $(g_1,h_1)(g_2,h_2)\in E(G\square H)$ if and only if $g_1=g_2$, $h_1h_2\in E(H)$ or $g_1g_2\in E(G)$, $h_1=h_2$. %Assuming isomorphic graphs are equal,
The cartesian product is associative and $G_1 \square G_2 \square \cdots \square G_d$ is well-defined.

The metric dimension problem was independently defined by \citet{slater1975leaves}, \citet{harary1976metric}. A vertex subset $S$ resolves a graph $G$ if every vertex is uniquely determined by its vector of distances to the vertices in $S$. More formally, a vertex $x$ of $G$ {\sl resolves} two vertices $u$ and $v$ of $G$ if $d_G(u,x)\ne d_G(v,x)$. A vertex subset $S$ is a {\sl resolving set} of $G$ if every two vertices in $G$ are resolved by some vertex of $S$. A resolving set $S$ of $G$ with the minimum cardinality is a {\sl metric basis} of $G$, and the size of $S$ is the {\sl metric dimension} of $G$, denoted by $\beta(G)$.

Doubly resolving sets were introduced by \citet{caceres2007metric} as a tool for researching resolving sets of Cartesian products of graph. Let $G$ be a graph of order $n\ge 2$. We say that $\{x,y\}$ {\sl doubly resolves} $\{u,v\}$, if $d_G(u,x)-d_G(u,y)\neq d_G(v,x)-d_G(v,y)$. A vertex subset $S$ of $G$ is a {\sl doubly resolving set} of $G$ if every pair of distinct vertices in $G$ is doubly resolved by some pair of vertices in $S$.  Let $\Psi(G)$ denote the minimum cardinality of a doubly resolving set of a graph $G \neq K_1$. The {\sl minimal doubly resolving set problem} is determining the minimum cardinality of a doubly resolving set for an input graph $G$.

Let $d_G(u,S)=(d_G(u,x_1),\dots,d_G(u,x_m))$ and $\overrightarrow{c}=(c,\dots,c)$, where $S=\{x_1,\dots,x_m\}$ is a subset of $V(G)$ and $c$ is a constant. Note that the dimension of $\overrightarrow{c}$ would be clear from the context. Then for every distinct vertices $u,v\in V(G)$, $S$ is a resolving set if and only if $d(u,S)-d(v,S)\neq \overrightarrow{0}$ while $S$ is a doubly resolving set if and only if $d(u,S)-d(v,S)\neq \overrightarrow{c}$ for all constant $c$. Hence $\beta(G)\le \Psi(G)$. However, there is not a function $f$ such that $\Psi(G)\le f(\beta(G))$ for all graphs $G$. In fact, \citet{caceres2007metric} proved that there is a $k$-connected graph $G_{n,k}$ such that $\beta(G_{n,k}) \le 2k$ and $\Psi(G_{n,k}) \ge 2n$ for all $k \ge 1$ and $n \ge 2$. The connection between the two problems is the following theorem that was proved in \citep{caceres2007metric}.

\begin{theorem}[\citet{caceres2007metric}]\label{betaPsi}
For all graphs $G$ and $H\neq K_1$,
\begin{equation*}
\max\{\beta(G),\beta(H)\}\le \beta(G\square H)\le \beta(G)+\Psi(H)-1.
\end{equation*}
\end{theorem}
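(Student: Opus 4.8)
The plan is to reduce both inequalities to the additive distance formula for the Cartesian product, namely $d_{G\square H}((g_1,h_1),(g_2,h_2)) = d_G(g_1,g_2)+d_H(h_1,h_2)$, which holds because a geodesic in $G\square H$ can be rearranged to change the $G$-coordinate and the $H$-coordinate in independent blocks. With this identity in hand, every resolution condition in the product becomes a statement about a \emph{sum} of distance-differences in the two factors.

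For the lower bound I would argue by projection. Let $S$ be any resolving set of $G\square H$ and let $\pi_G(S)$ be the set of first coordinates of its elements. Given distinct $u,v\in V(G)$, fix any $h_0\in V(H)$; since $S$ resolves $(u,h_0)$ and $(v,h_0)$, some $(g,h)\in S$ satisfies $d_G(u,g)+d_H(h_0,h)\ne d_G(v,g)+d_H(h_0,h)$, and the common term $d_H(h_0,h)$ cancels to give $d_G(u,g)\ne d_G(v,g)$. Hence $\pi_G(S)$ resolves $G$, so $\beta(G)\le|\pi_G(S)|\le|S|$; by symmetry $\beta(H)\le|S|$, and minimizing over $S$ yields $\max\{\beta(G),\beta(H)\}\le\beta(G\square H)$.

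For the upper bound I would exhibit an explicit resolving set of the claimed size. Let $A=\{a_1,\dots,a_p\}$ be a metric basis of $G$ (so $p=\beta(G)$) and $B=\{b_1,\dots,b_q\}$ a minimal doubly resolving set of $H$ (so $q=\Psi(H)$), and set
$$S=\{(a_i,b_1):1\le i\le p\}\cup\{(a_1,b_j):2\le j\le q\},$$
which has exactly $p+q-1$ elements. To verify $S$ resolves $G\square H$, take distinct $(u_1,v_1),(u_2,v_2)$ and write $\alpha_i=d_G(u_1,a_i)-d_G(u_2,a_i)$ and $\gamma_j=d_H(v_1,b_j)-d_H(v_2,b_j)$. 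By the distance formula, a vertex $(a_i,b_1)$ resolves the pair iff $\alpha_i+\gamma_1\ne0$, and a vertex $(a_1,b_j)$ resolves it iff $\alpha_1+\gamma_j\ne0$. Assuming for contradiction that no element of $S$ resolves the pair forces $\alpha_i=-\gamma_1$ for every $i$ and $\gamma_j=-\alpha_1$ for every $j$; in particular $(\gamma_1,\dots,\gamma_q)$ is the constant vector $\overrightarrow{-\alpha_1}$.

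This is exactly where the doubly resolving hypothesis does the real work, and I expect it to be the crux of the argument. A mere resolving set of $H$ only rules out $d_H(v_1,B)-d_H(v_2,B)=\overrightarrow{0}$, but in the product the differences are shifted by the constant $-\alpha_1$, so the stronger property is needed: since $B$ is doubly resolving, $d_H(v_1,B)-d_H(v_2,B)=\overrightarrow{c}$ for a constant forces $v_1=v_2$. Once $v_1=v_2$ we get $\gamma_j=0$ for all $j$, hence $\alpha_1=0$ and then $\alpha_i=0$ for all $i$, so $d_G(u_1,A)-d_G(u_2,A)=\overrightarrow{0}$; because $A$ resolves $G$ this gives $u_1=u_2$, contradicting distinctness. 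Therefore $S$ is resolving and $\beta(G\square H)\le p+q-1=\beta(G)+\Psi(H)-1$. The main obstacle is recognizing why $\Psi(H)$ rather than $\beta(H)$ is unavoidable and engineering the ``cross-shaped'' set $S$ so that the coupled difference $\alpha_i+\gamma_j$ is controlled one index at a time.
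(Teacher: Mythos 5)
Your proof is correct: the projection argument for the lower bound and the cross-shaped set $S=(A\times\{b_1\})\cup(\{a_1\}\times B)$ for the upper bound, with the doubly resolving property of $B$ killing the constant shift $-\alpha_1$, is exactly the argument of \citet{caceres2007metric}, which this paper cites without reproving. Your identification of why $\Psi(H)$ rather than $\beta(H)$ is needed --- a resolving set only excludes the zero difference vector, while the product couples the $H$-differences to an unknown constant --- is precisely the point of the original proof.
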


The metric dimension arises in many diverse areas, including network discovery and verification \citep{beerliova2005network}, the robot navigation \citep{khuller1996landmarks} and chemistry \citep{chartrand2000resolvability}. Finding the doubly resolving set in graphs is equivalent to locating the source of a diffusion in complex networks \citep{chen2014approximability}. The metric dimension problem and minimal doubly resolving set problem have many interesting theoretical properties which are out of the scope of this paper. The interested reader is referred, e.g. to \citep{bailey2011base,jernando2010extremal,kratica2012minimalB}.

As far as general graphs are concerned, both problems are NP-hard. The proof for the metric dimension problem is given in \citep{khuller1996landmarks} and for the minimal doubly resolving set problem is given in \citep{kratica2009computing}. \citet{epstein2015weighted} proved that the metric dimension problem is NP-hard even for split graphs, bipartite graphs and co-bipartite graphs. Therefore, some researchers try to design heuristic algorithms to solve the problems. It has been designed the genetic algorithm (GA) to solve the metric dimension problem in \citep{kratica2009computingB} and the minimal doubly resolving set problem in \citep{kratica2009computing}. \citet{mladenovic2012variable} designed the variable neighborhood search algorithm (VNS) to solve the metric dimension problem and the minimal doubly resolving set problem. \citet{chartrand2000resolvability} and \citet{kratica2009computing} gave the 0--1 integer linear programming formulations for the metric dimension problem and the minimal doubly resolving set problem respectively.

The {\sl Hamming graph} $H_{n,q}$ is the Cartesian product of $n$ copies of the complete graph $K_q$ with $q$ vertices
\begin{equation*}
H_{n,q}=\underbrace{K_q \square K_q \square \cdots \square K_q}_n.
\end{equation*}
Specifically, the vertex of $H_{n,q}$ is an $n$-dimensional vector $u=(u_1,\dots,u_n)\in \{0,1,\dots,q-1\}^n$ and two vertices are adjacent if they differ in exactly one coordinate (see Figure \ref{h23}). The operation of addition (subtraction) in $V(H_{n,q})$ is defined by the modulo-$q$ addition (subtraction) of the corresponding vector. For example, if $x = (0,0,1,1,2,2)$ and $y=(1,2,2,1,0,2)$ are two vertices of $H_{6,3}$, then $x+y=(1,2,0,2,2,1)$ and $x-y=(2,1,2,0,2,0)$. By the definition of $H_{n,q}$, it is easy to show that $d_{H_{n,q}}(u,v)=d_{H_{n,q}}(u-v,\overrightarrow{0})=\sum_{i=1}^n 1_{u_i\neq v_i}$, where $1_{u_i\neq v_i}=1$ if $u_i\neq v_i$ and $1_{u_i\neq v_i}=0$ if $u_i= v_i$.

The $n$-dimensional {\sl hypercube} $Q_n$, also called $n$-cube, is a Cartesian product of $n$ copies of $K_2$ (see Figure \ref{q3}). Note that $Q_n=H_{n,2}$. For each $u\in V(Q_n)$, we use $\overline{u}$ to denote its opposite vertex, that is $\overline{u}=u+\overrightarrow{1}$. It is clear that $d_{Q_n}(u,v)=\sum_{i=1}^n |u_i-v_i|$ and thus $d_{Q_n}(u,\overline{v})=n-d_{Q_n}(u,v)$.

% Let $\overrightarrow{c}=(c,c,\dots,c)$ for all $c\in \{0,1,\dots,q-1\}$.

The metric dimension of the Hamming graph is connected to  Mastermind, which is a deductive game for two players, the code setter and the code breaker. The code setter chooses a secret vector
$s=(s_1,\dots,s_n)\in \{0,1,\dots,q-1\}^n$. The task of the code breaker is to infer the secret
vector by a series of questions, each a vector $t=(t_1,\dots,t_n)\in \{0,1,\dots,q-1\}^n$. The code setter answers with two integers, denoted by $a(s,t)=|\{i:s_i=t_i,1\le i\le n\}|$ and $b(s,t)=\max\{a(\tilde{s},t):\tilde{s}$ is a permutation of $s\}$. The original commercial version of the game is $n = 4$ and $q = 6$, which was invented by Mordecai Meirowitz. %The rights to Mastermind have been held by Invicta Plastics, though they have since licensed its manufacture to Hasbro, Pressman Toys and Orda Industries.
 \citet{knuth1976computer} showed that four questions suffice to determine $s$ in this case. Let $g(n,q)$ be the smallest number such that the code breaker can determine any $s$ by asking $g(n,q)$ questions at once (without waiting for the answers). \citet{chvatal1983mastermind} proved that $g(n,q)\le (4+2\log_q 2+o(1))n/ \log_q n$. \citet{kabatianski2000mastermind} showed that $\beta(H_{n,q})-(q-1)\le g(n,q)\le \beta(H_{n,q})$. Let $f(n,q)$ be the smallest number such that the code breaker can determine any $s$ by asking $f(n,q)$ questions at once without $b(s,t)$ in the answers. \citet{caceres2007metric} showed that $g(n,q)\le f(n,q)=\beta(H_{n,q})$.

It has been showed that $\beta(H_{2,q})=\Psi(H_{2,q})=\lfloor (4q-2)/3 \rfloor$ for all $q\ge 5$ by \citet{caceres2007metric} and \citet{kratica2012minimalA}. Recently, \citet{jiang2019metric} gave the following nice theorem.
\begin{theorem}[\citet{jiang2019metric}]\label{jiang2019}
$\beta(H_{n,q})=(2+o(1))n/\log_q n$ for all $q\ge 2$.
\end{theorem}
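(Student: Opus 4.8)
The plan is to establish the two matching estimates $\beta(H_{n,q})\ge(2-o(1))\,n/\log_q n$ and $\beta(H_{n,q})\le(2+o(1))\,n/\log_q n$, and it is worth stressing at the outset that neither naive bound suffices. The crude counting bound — there are $q^n$ vertices but at most $(n+1)^m$ distance vectors, so $(n+1)^m\ge q^n$ forces $m\ge n/\log_q(n+1)$ — yields only the constant $1$, while the pairwise union bound (the regime of Chv\'atal's upper estimate $g(n,q)\le(4+2\log_q 2+o(1))n/\log_q n$, combined with $\beta\le g+(q-1)$) yields only a constant near $4$. Closing the factor-of-two gap on each side is the whole content of the theorem.

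For the lower bound I would sharpen the counting argument by exploiting concentration; we may assume $m=O(n/\log n)$, as otherwise there is nothing to prove. Let $S=\{x_1,\dots,x_m\}$ be a resolving set and let $U$ be a uniformly random vertex. For each $j$, $d_{H_{n,q}}(U,x_j)=\sum_{i=1}^n 1_{U_i\neq x_{j,i}}$ is a sum of $n$ independent $\mathrm{Bernoulli}((q-1)/q)$ variables with mean $n(q-1)/q$ (independent of $j$), so by Hoeffding's inequality it lies in an interval $I_j$ of length $O(\sqrt{n\log m})$ with probability at least $1-\tfrac1{2m}$. A union bound over the $m$ coordinates shows that with probability at least $\tfrac12$ the vector $d(U,S)$ lands in the box $\prod_j I_j$, which contains at most $\big(O(\sqrt{n\log m})\big)^m$ lattice points. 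Since $S$ resolves, $u\mapsto d(u,S)$ is injective, so at least $q^n/2$ distinct vectors lie in this box, forcing $\big(O(\sqrt{n\log m})\big)^m\ge q^n/2$. Taking logarithms and using $\log\sqrt{n\log m}=(\tfrac12+o(1))\log n$ gives $m\ge(2-o(1))\,n/\log_q n$. The gain of the factor $2$ comes precisely from the observation that each distance coordinate concentrates in a window of width $n^{1/2+o(1)}$ rather than ranging over all of $\{0,\dots,n\}$.

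For the upper bound the case $q=2$ is cleanest and reveals the mechanism. Over the reals $d_{Q_n}(u,x)=\sum_i(u_i+x_i-2u_ix_i)$, so with $\delta=u-v\in\{-1,0,1\}^n$ one has $d(u,x_j)-d(v,x_j)=\langle\delta,\mathbf 1\rangle-2\langle\delta,x_j\rangle$; hence $S$ fails to resolve $\{u,v\}$ exactly when $\langle\delta,x_j\rangle=\tfrac12\sum_i\delta_i$ for every $j$. Thus $S$ resolves $Q_n$ iff the weighings $x_1,\dots,x_m$ detect every nonzero $\delta$, which is the coin-weighing (detecting-matrix) problem, solved by Lindstr\"om's construction with $m=(2+o(1))n/\log_2 n$, matching the lower bound. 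For general $q$ I would one-hot encode each vertex as $\tilde u\in\{0,1\}^{nq}$ (one $1$ per coordinate block), so that the match count $|\{i:u_i=x_{j,i}\}|=\langle\tilde u,\tilde x_j\rangle$ recasts the problem as a structured $0/1$ coin-weighing whose weight vectors $\tilde x_j$ are themselves one-hot; the task is then to design a $q$-ary analogue of Lindstr\"om's weighings exploiting this structure while attaining $(2+o(1))n/\log_q n$.

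The main obstacle is this last point. A uniformly random $S$ is provably too weak: the $\approx q^{2n}$ pairs at the typical distance $(q-1)n/q$ each collide under one random weighing with probability $\approx\sqrt{q/n}$, so the expected number of unresolved pairs is $\approx q^{2n}(q/n)^{m/2}$, which stays $\ge 1$ until $m\approx 4n/\log_q n$; moreover the deletion method is unavailable, since every vertex of $G$ must be resolved rather than merely most of them. One is therefore forced into an explicit, structured construction, and generalizing Lindstr\"om's recursive weighing design to the one-hot / $q$-ary setting without losing the constant $2$ is the crux of the argument. By contrast, the concentration-counting lower bound is comparatively routine once the $\sqrt n$ window is noticed.
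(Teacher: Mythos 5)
A structural note first: the paper does not prove this statement at all --- Theorem~\ref{jiang2019} is imported verbatim from \citep{jiang2019metric} and used as a black box (together with Theorem~\ref{psile} and the bound $\phi(H_{n,q})\le\min\{q-1,n\}$) to derive Theorem~\ref{hamming}. So your proposal can only be measured against the cited proof of Jiang and Polyanskii, and against that benchmark it is genuinely incomplete. What you do have is sound: your concentration-sharpened counting argument is the standard lower bound --- each of the $m$ distance coordinates of a uniformly random vertex concentrates in a window of width $n^{1/2+o(1)}$, so injectivity of $u\mapsto d(u,S)$ forces $\bigl(n^{1/2+o(1)}\bigr)^m\ge q^n/2$ and hence $m\ge(2-o(1))n/\log_q n$. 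Your $q=2$ upper bound via Lindstr\"om is also right in substance, though your ``iff'' is slightly off: a weighing strategy guarantees $\langle\delta,x_j\rangle\neq 0$ for some $j$, whereas resolving requires $2\langle\delta,x_j\rangle\neq\langle\delta,\mathbf{1}\rangle$; the two are bridged by adjoining the single vertex $\overrightarrow{0}$ (or by invoking Theorem~\ref{sebh02004}), which is harmless asymptotically.

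The genuine gap is the upper bound for $q\ge 3$, which is the main content of the theorem and which your sketch, by its own admission, does not prove. You correctly diagnose that a uniformly random set stalls near $4n/\log_q n$ and that the Chv\'atal-type bound has the wrong constant, you propose a one-hot encoding into $\{0,1\}^{nq}$, and you then declare the design of a $q$-ary Lindstr\"om analogue to be ``the crux'' --- but that crux is left unresolved, so the argument terminates exactly where the theorem begins. Worse, the one-hot reduction cannot be used off the shelf: admissible weighing vectors $\tilde{x}_j$ in your encoding must themselves be one-hot (exactly one $1$ per length-$q$ block) in order to correspond to vertices of $H_{n,q}$, so general detecting-matrix constructions are not available; and even ignoring that structural restriction, applying Lindstr\"om's bound to a coin-weighing instance of length $nq$ yields only $(2+o(1))\,nq/\log_2(nq)$ weighings, which exceeds the target $(2+o(1))\,n/\log_q n$ by a factor of order $q/\log_2 q$. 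Thus the encoding by itself destroys the constant $2$ (indeed loses a factor growing with $q$), and nothing in your sketch recovers it. To make this a proof you would need the nontrivial $q$-ary construction that Jiang and Polyanskii actually supply --- a substantially more involved generalization of Lindstr\"om's method --- or else you must restrict the claim to $q=2$, where your argument, patched as above, does give $\beta(Q_n)=(2+o(1))n/\log_2 n$.
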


We remark that \citet{kabatiansky2018metric} proved the above theorem for $q=3,4$. For $q=2$, the metric dimension problem in hypercubes is related to the following coin weighing problem.

Given $n$ coins, some of them may be defective. We know the weight $g$ of the good coins in advance and also the weight $h \neq g$ of the defective coins. If we weigh a subset of coins with a spring scale, then the outcome will tell us precisely the number of defective coins among them. The coin weighing problem is determining the minimum number $M(n)$ of weighings by means of which the good and defective coins can be separated  under the assumption that all the family of tested subsets has to be given in advance.

More formally, the binary vector $u=(u_1,\dots,u_n)\in \{0,1\}^n$ is corresponding to a distribution of defective coins, where $u_j=1$ if and only if the $j$-th coin is defective. Similarly, the binary vector $x=(x_1,\dots,x_n)\in \{0,1\}^n$ is corresponding to a weighing, where $x_j=1$ if and only if the $j$-th coin is chosen to weigh. The outcome of a weighing is a scalar product of $x$ and $u$, that is $u\cdot x=\sum_{i=1}^n u_ix_i$. A set of binary vectors $S$ is called a {\sl weighing strategy} if for every pair of distinct vectors $u,v$, there exists $x\in S$ such that $u\cdot x\neq v\cdot x$.
The coin weighing problem was proposed for $n=5$ by \citet{shapiro1960problem} and solved by \citet{fine1960solution}. \citet{erdos1963two} presented a lower bound and \citet{lindstrom1964combinatory} (independently by \citet{cantor1966determination}) presented an upper bound. The lower bound and the upper bound are asymptotically equivalent. Almost all exact values of $M(n)$ are not known yet.

\begin{theorem}[\citet{erdos1963two,lindstrom1964combinatory,cantor1966determination}]\label{erdos-lindstrom-cantor}
%As $n\rightarrow +\infty$,
\begin{equation*}
M(n)=(2+o(1))n/\log_2 n.
\end{equation*}
\end{theorem}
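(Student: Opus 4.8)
The plan is to reduce Theorem~\ref{erdos-lindstrom-cantor} to a statement about $0$-$1$ matrices and then establish the matching lower and upper bounds separately, since the whole content of the theorem is the sharp constant $2$. First I would reformulate the problem: a weighing strategy $S=\{x_1,\dots,x_m\}\subseteq\{0,1\}^n$ is exactly the set of rows of an $m\times n$ $0$-$1$ matrix $A$, and $S$ is a valid strategy if and only if the map $u\mapsto Au$ is injective on $\{0,1\}^n$; equivalently, $Aw\neq \overrightarrow{0}$ for every nonzero $w=u-v\in\{-1,0,1\}^n$. Call such an $A$ a \emph{detecting matrix}. Then $M(n)$ is the least number of rows of a detecting matrix with $n$ columns, and the theorem asserts that this quantity is $(2+o(1))n/\log_2 n$.

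For the lower bound (the Erd\H{o}s--R\'enyi direction) I would use an entropy argument. Draw $u$ uniformly from $\{0,1\}^n$ and set $Y_i=u\cdot x_i$. Each $Y_i$ is a sum of $|x_i|\le n$ independent $\mathrm{Bernoulli}(1/2)$ variables, i.e.\ $Y_i\sim\mathrm{Bin}(|x_i|,1/2)$, so by the Gaussian/local-limit estimate for the entropy of a binomial its binary entropy obeys $H(Y_i)\le \tfrac12\log_2 n+O(1)$. Since $S$ resolves every pair of vectors, the map $u\mapsto(Y_1,\dots,Y_m)$ is injective, whence $n=H(u)=H(Y_1,\dots,Y_m)\le\sum_{i=1}^m H(Y_i)\le m\bigl(\tfrac12\log_2 n+O(1)\bigr)$, giving $m\ge(2-o(1))n/\log_2 n$. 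The essential point, and the reason the constant is $2$ rather than the naive information-theoretic $1$, is that $\mathrm{Bin}(k,1/2)$ concentrates on an interval of length $\Theta(\sqrt n)$, so each weighing conveys only about $\tfrac12\log_2 n$ bits instead of $\log_2 n$.

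For the upper bound (the Lindstr\"om--Cantor direction) I must exhibit a detecting matrix with $m=(2+o(1))n/\log_2 n$ rows, equivalently $n=(\tfrac12-o(1))m\log_2 m$ columns in $\{0,1\}^m$ all of whose $2^n$ subset sums are distinct. I would construct these columns by Lindstr\"om's recursive (Kronecker-type) scheme: combine detecting families for small dimensions into one for a larger dimension, arranging the recursion so that the number of usable columns grows like $\tfrac12 m\log_2 m$, and then verify that the distinctness of subset sums is preserved at each step. Matching this construction against the lower bound completes the proof.

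I expect the upper bound to be the main obstacle, because the sharp constant $2$ is genuinely delicate. A direct probabilistic construction with i.i.d.\ uniform rows, analyzed by a union bound $\sum_{w\neq 0}P(Aw=\overrightarrow{0})<1$ over $w\in\{-1,0,1\}^n$, is dominated by the vectors of support size $\tfrac23 n$ (where $H(\alpha)+\alpha$ is maximized at $\alpha=2/3$) and only yields the weaker constant $2\log_2 3\approx 3.17$. Thus no naive random argument gives the theorem, and the sharp constant requires Lindstr\"om's explicit design (or a carefully reweighted probabilistic argument); getting the recursion and the subset-sum bookkeeping exactly right is where the real work lies.
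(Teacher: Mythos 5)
You should know at the outset that the paper does not prove this statement at all: Theorem~\ref{erdos-lindstrom-cantor} is quoted from the literature (\citet{erdos1963two} for the lower bound, \citet{lindstrom1964combinatory} and \citet{cantor1966determination} for the upper bound), and the only ingredient reproduced in the paper is Lindstr\"{o}m's complex theorem in Section~\ref{cal}, itself stated without proof. So your proposal must stand on its own, and its first half does: the reformulation of weighing strategies as detecting matrices is exactly right, and the entropy argument for the lower bound is correct and essentially complete --- injectivity gives $n=H(u)=H(Y_1,\dots,Y_m)\le\sum_{i=1}^m H(Y_i)$, and $H(Y_i)\le\tfrac12\log_2 n+O(1)$ since $Y_i\sim\mathrm{Bin}(|x_i|,1/2)$ has variance at most $n/4$, yielding $m\ge(2-o(1))n/\log_2 n$. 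Your side remark is also accurate: a union bound with i.i.d.\ uniform rows over $w\in\{-1,0,1\}^n$ is dominated by vectors of support $2n/3$, where $H(\alpha)+\alpha$ attains its maximum $\log_2 3$, so the naive random construction only reaches the constant $2\log_2 3$.

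The genuine gap is the entire upper bound. You correctly identify the target (roughly $(\tfrac12-o(1))m\log_2 m$ columns in $\{0,1\}^m$ with all subset sums distinct), but then say only that you would use ``Lindstr\"{o}m's recursive (Kronecker-type) scheme'' and ``verify that the distinctness of subset sums is preserved at each step'' --- that verification \emph{is} the theorem, and your outline supplies no construction, no induction hypothesis, and no cancellation identity; nothing in it constrains how the small detecting families are to be combined. Since, as you yourself stress, the constant $2$ is delicate and inaccessible to naive randomness, this omission is the whole of the hard direction, not a routine detail. The missing ingredient is precisely what the paper quotes in Section~\ref{cal}: for every simplicial complex $\mathcal{F}$ one has $M(\sum_{A\in\mathcal{F}}|A|)\le|\mathcal{F}|-1$, whose proof (Section 2.4 of \citep{aigner1988combinatorial}) rests on a M\"{o}bius-type inclusion--exclusion cancellation over the complex. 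Applied to $\mathcal{F}_m=\{F_0,\dots,F_{m-1}\}$ with $F_j$ the set of positions of ones in the binary expansion of $j$ (exactly the complex driving Algorithm~\ref{alg}), one computes $\sum_{A\in\mathcal{F}_m}|A|=(\tfrac12+o(1))m\log_2 m$, which gives $M(n)\le(2+o(1))n/\log_2 n$. Until you reproduce that argument, or Cantor's independent construction, your proposal establishes only the lower bound.
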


A surprising connection between the metric dimension problem in hypercubes and the coin weighing problem was given in \citep{sebho2004metric}.
\begin{theorem}[\citet{sebho2004metric}]\label{sebh02004}
$|\beta(Q_n)-M(n)|\le 1$.
\end{theorem}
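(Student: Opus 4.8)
The plan is to translate both problems into the common language of scalar products over $\{-1,0,1\}^n$ and then show that a resolving set of $Q_n$ and a weighing strategy can be converted into one another at the cost of a single extra vector, which forces them to differ in size by at most one.

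First I would record the distance identity in $Q_n$. Since $|u_i-x_i|=u_i+x_i-2u_ix_i$ for $u_i,x_i\in\{0,1\}$, summing gives $d_{Q_n}(u,x)=\sum_i u_i+\sum_i x_i-2(u\cdot x)$. Hence, for distinct vertices $u,v$ and $w:=u-v$, the landmark $x$ resolves $u,v$ exactly when $d_{Q_n}(u,x)-d_{Q_n}(v,x)=w\cdot(\overrightarrow{1}-2x)=\overrightarrow{1}\cdot w-2(w\cdot x)\neq 0$. On the weighing side, $x$ separates $u,v$ exactly when $w\cdot x\neq 0$. As $u,v$ run over distinct pairs of $\{0,1\}^n$, the difference $w=u-v$ runs over \emph{all} nonzero vectors of $\{-1,0,1\}^n$ in both problems, so the two conditions live over the same index set and differ only by the additive term $\overrightarrow{1}\cdot w=\sum_i w_i$.

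Next I would exploit this single-term discrepancy. For $\beta(Q_n)\le M(n)+1$, take a minimum weighing strategy $S$ and set $S'=S\cup\{\overrightarrow{0}\}$. Given nonzero $w$, if $\sum_i w_i\neq 0$ then $\overrightarrow{0}$ already resolves the pair because its response difference is exactly $\overrightarrow{1}\cdot w$; if $\sum_i w_i=0$ then some $x\in S$ has $w\cdot x\neq 0$, and its resolving difference $\overrightarrow{1}\cdot w-2(w\cdot x)=-2(w\cdot x)$ is nonzero. Thus $S'$ is resolving and $\beta(Q_n)\le|S|+1=M(n)+1$. Symmetrically, for $M(n)\le\beta(Q_n)+1$, take a metric basis $S$ and set $S'=S\cup\{\overrightarrow{1}\}$: the probe $\overrightarrow{1}$ returns $w\cdot\overrightarrow{1}=\sum_i w_i$, handling all $w$ with $\sum_i w_i\neq 0$, while for $\sum_i w_i=0$ the resolving vertex $x\in S$ satisfies $-2(w\cdot x)\neq 0$, hence $w\cdot x\neq 0$ and $x$ separates the pair for weighing. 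Combining the two inequalities yields $|\beta(Q_n)-M(n)|\le 1$.

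The conceptual content is entirely in the identity $d_{Q_n}(u,x)-d_{Q_n}(v,x)=\overrightarrow{1}\cdot w-2(w\cdot x)$, which differs from the weighing response $w\cdot x$ only through $\overrightarrow{1}\cdot w$. The only real obstacle is to neutralize this stray term, and the step I expect to be mildly delicate is verifying that a single auxiliary vector suffices: one must check that the two regimes $\sum_i w_i\neq 0$ and $\sum_i w_i=0$ are jointly covered — the added probe ($\overrightarrow{0}$ for resolving, $\overrightarrow{1}$ for weighing) exactly detects the sign sum $\sum_i w_i$, and when that sum vanishes the surviving $-2(w\cdot x)$ term makes the two separation conditions coincide. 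No finer structure of $Q_n$ is needed, which is why the bound is a clean $\pm 1$ rather than anything depending on $n$.
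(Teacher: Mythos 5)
Your proof is correct: the identity $d_{Q_n}(u,x)-d_{Q_n}(v,x)=\overrightarrow{1}\cdot w-2(w\cdot x)$ with $w=u-v$ is verified accurately, both augmentations (adding $\overrightarrow{0}$ to a weighing strategy, adding $\overrightarrow{1}$ to a metric basis) are sound, and the case split on $\sum_i w_i$ covers all distinct pairs. The paper itself states this theorem as a cited result of \citet{sebho2004metric} without proof, but your argument is essentially the same scalar-product bridge the paper uses in its own proof of Theorem~\ref{lin} (the doubly resolving analogue $\Psi(Q_n)=M(n)+1$), so this is the standard approach rather than a genuinely different route.
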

  %that is $|\beta(Q_n)-M(n)|\le 1$. Therefore, $\beta(Q_n)=(2+o(1))n/\log_2 n$.

Theorems \ref{erdos-lindstrom-cantor} and \ref{sebh02004} imply that $\beta(Q_n)=(2+o(1))n/\log_2 n$. Researchers try to get optimal upper bounds of $\beta(Q_n)$ and  $\Psi(Q_n)$ by heuristic algorithms. Besides the genetic algorithm and the variable neighborhood search algorithm as mentioned previously, some specially algorithms are designed  for the metric dimension and minimal doubly resolving set problem in hypercubes. For example, \citet{nikolic2017symmetry} designed a greedy algorithm and a dynamic programming procedure for the metric dimension of a hypercube by using the symmetry property of resolving sets to reduce the size of the feasible solution set. \citet{hertz2020ipbased} designed an IP-based swapping algorithm for the metric dimension and minimal doubly resolving set problem in hypercubes.

The {\sl folded hypercube} is a graph obtained by merging opposite vertices of a hypercube. A vertex of a {\sl folded $n$-cube} $F_n$ is denoted by $\fold{u}=\{u,\overline{u}\}$, where $u$ is a vertex of $Q_n$. $\fold{u}\fold{v}\in E(F_n)$ if and only if $uv\in E(Q_n)$ or $u\overline{v}\in E(Q_n)$ (see Figure \ref{f3}). Note that $uv\in E(Q_n)$ if and only if $\bar{u}\bar{v}\in E(Q_n)$. It is  easy to show that $d_{F_n}(\fold{u},\fold{v})=\min\{d_{Q_n}(u,v),n-d_{Q_n}(u,v)\}$. Recently, \citet{zhang2020metric} gave the upper bound of metric dimension of folded $n$-cube.
\begin{theorem}[\citet{zhang2020metric}]\label{zhang2020}
$\beta(F_n)\le n-1$ for all odd $n\ge 5$ and $\beta(F_n)\le 2n-4$ for all even $n\ge 6$.
\end{theorem}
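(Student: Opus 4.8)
The plan is to exhibit, in each parity class, an explicit resolving set built from low-weight vertices and to verify that the vector of folded distances to this set determines a vertex up to complementation. Throughout write $|u|$ for the Hamming weight of $u\in\{0,1\}^n$ and recall that $d_{F_n}(\fold{u},\fold{x})=\min\{|u\oplus x|,\,n-|u\oplus x|\}$, where $\oplus$ is coordinatewise addition modulo $2$. Since $|\overline{u}\oplus x|=n-|u\oplus x|$, the map $u\mapsto d_{F_n}(\fold{u},\fold{x})$ is constant on each pair $\{u,\overline{u}\}$, hence well defined on $V(F_n)$; so it suffices to pick landmarks for which the resulting distance vector is injective on representatives, which I choose of weight at most $\lfloor n/2\rfloor$. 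Writing $g_x(u)=d_{F_n}(\fold{u},\fold{x})$ and using $|u\oplus e_i|=|u|+1-2u_i$, a single basis vector gives $g_{e_i}(u)=\min\{|u|+1-2u_i,\ n-|u|-1+2u_i\}$.

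For odd $n\ge 5$ I would take $S=\{\fold{e_1},\dots,\fold{e_{n-1}}\}$, of size $n-1$; here no two folded distances collide at the equator because $n$ is odd. In the low regime $|u|\le (n-3)/2$ we have $|u\oplus e_i|\le (n-1)/2<n/2$ for all $i$, so $g_{e_i}(u)=|u|+1-2u_i\in\{|u|-1,|u|+1\}$; the extreme values recover $|u|$ and then each $u_i$ with $i\le n-1$, whence $u_n=|u|-\sum_{i<n}u_i$ and the vertex is pinned down. At the top weight $|u|=(n-1)/2$ the fold lowers the value on coordinates with $u_i=0$ (giving $(n-1)/2$ in place of $(n+1)/2$), yet $g_{e_i}(u)=(n-3)/2$ exactly when $u_i=1$, so coordinates remain legible and the two weight classes yield distinguishable patterns (gap $2$ versus gap $1$). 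Checking that the only constant distance vectors come from $\foldzero$ and $\fold{e_n}$ then finishes injectivity and gives $\beta(F_n)\le n-1$.

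For even $n\ge 6$ this set fails on the equator: if $|u|=n/2$ then $|u\oplus e_i|\in\{n/2-1,n/2+1\}$, so $g_{e_i}(u)=n/2-1$ for every $i$ regardless of $u$, and all $\binom{n}{n/2}/2$ equatorial vertices receive one and the same distance vector. The remedy I propose is to adjoin weight-two landmarks $\fold{e_1+e_j}$: for equatorial $u$ one computes $g_{e_1+e_j}(u)=n/2$ when $u_1\ne u_j$ and $g_{e_1+e_j}(u)=n/2-2$ when $u_1=u_j$, so such a landmark reports exactly whether coordinates $1$ and $j$ agree. A spanning family of these recovers every equatorial $u$ up to global complementation, which is precisely equality in $F_n$. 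Combining the single-coordinate landmarks (which, by the regime analysis adapted to even $n$, separate all non-equatorial vertices) with a spanning family of weight-two landmarks, and then pruning the overlap between the two families, is designed to produce a resolving set whose size trims down to $2n-4$.

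The main obstacle is the even case. Off the equator the reasoning is the odd-case analysis with the fold point $n/2$ inserted, but one must check that the single-coordinate and weight-two families \emph{jointly} resolve the mixed pairs---in particular a vertex of weight $n/2-1$ or $n/2$ against one of smaller weight, where the $\min$ reflects some distances across $n/2$---and that the weight-two landmarks introduce no new equatorial-type collisions. Verifying this joint resolving property across the boundary weights, together with the final accounting of redundancies that brings the count to exactly $2n-4$ rather than the naive sum, is the step I expect to be hardest to make airtight.
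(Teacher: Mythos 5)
Your odd case is correct and takes a genuinely different route from the paper. You verify directly that $S=\{\fold{e_1},\dots,\fold{e_{n-1}}\}$ resolves $F_n$: on representatives of weight at most $(n-1)/2$ the readings $g_{e_i}(u)$ lie in $\{|u|-1,|u|+1\}$ (gap $2$) except at top weight $(n-1)/2$, where they lie in $\{(n-3)/2,(n-1)/2\}$ (gap $1$), and the only constant vectors come from $\foldzero$ and $\fold{e_n}$; this all checks out for odd $n\ge 5$, including the borderline case $n=5$. The paper instead proves two transfer lemmas, $\beta(F_n)=\beta(Q_n)$ for odd $n$ (Lemma \ref{oddfold}) and $\beta(F_{n+1})\le 2\beta(Q_n)$ (Lemma \ref{evenfold}), and feeds in $\beta(Q_{n+1})\le\beta(Q_n)+1$ with $\beta(Q_5)=4$. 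Your construction is self-contained and explicit; the paper's route buys much more, namely $\beta(F_n)=\beta(Q_n)=(2+o(1))n/\log_2 n$ for odd $n$, which is what disproves Conjecture \ref{zhang2020conj}, whereas your set only certifies the bound $n-1$.

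The even case, however, has a genuine gap, and it is not merely "accounting". Your family has $(n-1)+(n-1)=2n-2$ landmarks, and the natural prunings to $2n-4$ fail. Equatorial vertices see all weight-one landmarks as the constant $n/2-1$, so they must be separated by the pair landmarks alone, and a disconnected agreement graph can fail outright (for $n=6$ with pairs $\{1,2\},\{3,4\},\{5,6\}$, the equatorial vertices $111100$ and $110011$ receive identical data); so essentially all the pair landmarks must stay and the removals must come from the singles. But at the boundary weight $n/2-1$ the pair readings collapse: since $n/2+1$ folds to $n/2-1$, one gets $g_{e_a+e_b}(u)=n/2-3$ if $u_a=u_b=1$ and $n/2-1$ otherwise, so a pair landmark cannot distinguish $u_a+u_b=0$ from $u_a+u_b=1$. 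Concretely, for $n=8$ with singles $e_1,\dots,e_5$ and pairs $e_1+e_j$, $j=2,\dots,8$ (total $2n-4=12$), the distinct folded vertices $\fold{u}$ and $\fold{u'}$ with $u=(0,1,1,0,0,1,0,0)$ and $u'=(0,1,1,0,0,0,1,0)$ have identical distance vectors to the entire family: the singles agree on coordinates $1,\dots,5$, and every pair $e_1+e_j$ reads $3$ for both because $u_1=0$. Replacing the star by a path or shifting which singles are dropped runs into the same adversarial collision at weight $n/2-1$, so it is unclear the weight-$\{1,2\}$ scheme reaches $2n-4$ at all; this is exactly the step you flagged as hardest, and it is where the argument currently breaks. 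The paper sidesteps the whole difficulty: by Lemma \ref{evenfold}, any resolving set $S$ of $Q_{n-1}$ lifts to the resolving set $\{\fold{x^0},\fold{x^1}:x\in S\}$ of $F_n$, of size $2\beta(Q_{n-1})\le 2(n-2)=2n-4$, with a two-line contradiction ($d_1+d_2=n+1$ and $(d_1+1)+(d_2+1)=n+1$ cannot both hold) doing the resolving check.
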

  They raised the following conjecture.

 \begin{conjecture}[\citet{zhang2020metric}]\label{zhang2020conj}
  If $n\ge 5$ is odd, then $\beta(F_n)=n-1$.
 \end{conjecture}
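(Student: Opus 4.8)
Since the paper's declared aim is to \emph{disprove} Conjecture~\ref{zhang2020conj}, the target I would pursue is to exhibit an odd $n\ge 5$ for which $\beta(F_n)\le n-2$, contradicting equality with the upper bound of Theorem~\ref{zhang2020}. The plan is to recast the metric dimension of $F_n$ as an absolute-value variant of the coin weighing problem and then import the sublinear coin weighing bounds. The starting point is a clean reformulation of folded distance. Writing $\hat{u}=(1-2u_1,\dots,1-2u_n)\in\{\pm1\}^n$ for $u\in V(Q_n)$, one computes $\hat{u}\cdot\hat{x}=n-2\,d_{Q_n}(u,x)$, so that
\begin{equation*}
d_{F_n}(\fold{u},\fold{x})=\min\{d_{Q_n}(u,x),\,n-d_{Q_n}(u,x)\}=\tfrac{1}{2}\bigl(n-|\hat{u}\cdot\hat{x}|\bigr).
\end{equation*}
Because $\hat{\overline{u}}=-\hat{u}$, the tuple $\bigl(|\hat{u}\cdot\hat{x}^{(1)}|,\dots,|\hat{u}\cdot\hat{x}^{(m)}|\bigr)$ depends only on $\fold{u}$, and a landmark set $\{\fold{x^{(1)}},\dots,\fold{x^{(m)}}\}$ resolves $F_n$ if and only if these absolute-correlation vectors separate all $\hat{u}$ up to a global sign. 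Thus $\beta(F_n)$ is exactly the least number of $\pm1$ queries whose absolute inner products identify an unknown $\pm1$ vector modulo negation.

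With this reformulation in hand, the conjecture should collapse for large $n$. Note that in the \emph{signed} version the data $\hat{u}\cdot\hat{x}^{(j)}$ are, via the invertible relation $d_{Q_n}(u,x)=(n-\hat{u}\cdot\hat{x})/2$, nothing but the hypercube distances, so a signed query set is precisely a resolving set of $Q_n$; by Theorems~\ref{erdos-lindstrom-cantor} and~\ref{sebh02004} such a set needs only $(2+o(1))n/\log_2 n$ queries. I would therefore take a Lindstr\"{o}m-type weighing strategy $W$ of this size and augment it with a modest auxiliary family $W'$ of \emph{sign-synchronizing} landmarks whose role is to lift each observed $|\hat{u}\cdot\hat{x}|$ back to the signed value $\hat{u}\cdot\hat{x}$ (after fixing the representative with $\hat{u}_1=+1$ to kill the global sign). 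As long as $|W|+|W'|=o(n)$, the resulting set resolves $F_n$ and yields $\beta(F_n)=o(n)$, which is strictly below $n-1$ for all sufficiently large odd $n$ and already refutes the universal assertion of Conjecture~\ref{zhang2020conj}. A constant-factor, or even $o(n)$, overhead in $|W'|$ is harmless here, so the construction need not be tight—only comfortably sublinear.

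The main obstacle is precisely the sign loss in $|\hat{u}\cdot\hat{x}|$: the individual correlation signs are coupled, and reconstructing $\hat{u}$ up to a single global sign from unsigned correlations is the delicate step with no analogue in ordinary coin weighing. I would attack it by pairing each weighing $\hat{x}\in W$ with companions obtained by flipping one coordinate, since flipping coordinate $i$ alters $\hat{u}\cdot\hat{x}$ by the magnitude-$2$ increment $-2\hat{u}_i\hat{x}_i$, so that comparing $|\hat{u}\cdot\hat{x}|$ with $|\hat{u}\cdot\hat{x}'|$ constrains the relative sign. Making these local constraints globally consistent amounts to a connectivity/synchronization condition on an auxiliary graph whose edges are the comparable query pairs, and checking that a synchronizing family exists with only sublinear overhead is where the real work lies. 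If that step proves stubborn, a fallback is a direct combinatorial construction of an explicit resolving set of size $n-2$ for a single suitable odd $n$ (built from a small design, or located by the search methods cited in the introduction), since one counterexample already disproves the conjecture.
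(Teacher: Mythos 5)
Your reduction is correct as far as it goes: with $\hat{u}=(1-2u_1,\dots,1-2u_n)$ one indeed has $\hat{u}\cdot\hat{x}=n-2d_{Q_n}(u,x)$, hence $d_{F_n}(\fold{u},\fold{x})=\tfrac{1}{2}(n-|\hat{u}\cdot\hat{x}|)$, so resolving $F_n$ is equivalent to identifying a $\pm1$ vector up to global negation from unsigned correlations; and establishing $\beta(F_n)=o(n)$, or even a single odd $n$ with $\beta(F_n)\le n-2$, would refute the conjecture, since it is a universal statement. But as written this is a plan, not a proof: the decisive step --- that a sign-synchronizing family $W'$ of sublinear size exists and makes the local sign constraints globally consistent --- is exactly the step you leave open (``where the real work lies''), and the fallback (an explicit resolving set of size $n-2$ found by search) is not carried out either. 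Neither branch is executed, so nothing is disproved. The synchronization step is moreover genuinely delicate, not bookkeeping: comparing $|\hat{u}\cdot\hat{x}|$ with $|\hat{u}\cdot\hat{x}'|$ after a one-coordinate flip pins down the relative sign only when the correlation is large in magnitude; when $\hat{u}\cdot\hat{x}=\pm1$ the flipped data fold symmetrically and yield no constraint, so the comparability graph you invoke can disconnect on many inputs $u$ unless the weighing strategy is specially structured --- and you give no construction or counting argument ruling this out.

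The paper closes precisely this gap, and in a stronger form: for odd $n$ \emph{no} synchronizing overhead is needed at all. Its Lemma \ref{oddfold} shows that if $S$ is a resolving set of $Q_n$ then $\{\fold{x}:x\in S\}$ already resolves $F_n$, by a parity argument: if $\fold{x}$ fails on $\fold{u},\fold{v}$ with representatives chosen so that $d_{Q_n}(u,x)=d_{Q_n}(v,x)$, then $u\cdot \overrightarrow{1}+v\cdot \overrightarrow{1}$ is even; while if $\fold{y}$ also fails although $d_{Q_n}(u,y)\neq d_{Q_n}(v,y)$, then $d_{Q_n}(u,y)+d_{Q_n}(v,y)=n$, which is odd and has the same parity as $u\cdot \overrightarrow{1}+v\cdot \overrightarrow{1}$ --- a contradiction. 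In your language: for odd $n$ every $\hat{u}\cdot\hat{x}$ is odd, never $0$, and distance parities perform the global sign-fixing for free, which is exactly the mechanism your synchronization graph gropes for. Hence $\beta(F_n)=\beta(Q_n)$ for odd $n\ge 3$ (Lemmas \ref{ge} and \ref{oddfold}); combined with $\beta(Q_{n+1})\le\beta(Q_n)+1$ and the known value $\beta(Q_9)=7$ from Table \ref{qn9}, this gives $\beta(F_n)=\beta(Q_n)\le n-2$ for all odd $n\ge 9$ (concretely $\beta(F_9)=7\neq 8$), a finite, unconditional disproof --- no asymptotics, no Theorems \ref{erdos-lindstrom-cantor} or \ref{sebh02004} needed. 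If you want to salvage your route, the statement to aim for is exactly this zero-overhead lemma, rather than a sublinear $W'$.
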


\begin{figure}[ht]
\begin{minipage}[t]{0.29\textwidth}
\centering
\begin{tikzpicture}

\foreach \i in {0,1,2}
{
\foreach \j in {0,1,2}
{
\coordinate[label=180-90*\i:{$\i\j$}] (a\i\j) at (0+1.3*\i,0+1.3*\j);
\fill (a\i\j) circle[radius=2pt];
}
\draw (a\i0) -- (a\i1) -- (a\i2);
\draw (a\i2) arc (90:270:0.5 and 1.3);
}
\foreach \j in {0,1,2}
{
\draw (a0\j) -- (a1\j) -- (a2\j);
\draw (a2\j) arc (0:180:1.3 and 0.5);
}

\end{tikzpicture}
\caption{The graph $H_{2,3}$}\label{h23}
\end{minipage}
\begin{minipage}[t]{0.29\textwidth}
\centering
\begin{tikzpicture}

\foreach \i in {0,1}
{
\foreach \j in {0,1}
{
\coordinate[label=-135:{$\i0\j$}] (a\i\j) at (0+2*\i,0+2*\j);
\fill (a\i\j) circle[radius=2pt];
}
}
\draw (a00) -- (a01) -- (a11) -- (a10) -- cycle;
\foreach \i in {0,1}
{
\foreach \j in {0,1}
{
\coordinate[label=45:{$\i1\j$}] (b\i\j) at (0.707+2*\i,0.707+2*\j);
\fill (b\i\j) circle[radius=2pt];
\draw (a\i\j) -- (b\i\j);
}
}
\draw (b00) -- (b01) -- (b11) -- (b10) -- cycle;

\end{tikzpicture}
\caption{The graph $Q_3$}\label{q3}
\end{minipage}
\begin{minipage}[t]{0.4\textwidth}
\centering
\begin{tikzpicture}
\coordinate[label=-135:{$(000,111)$}] (a00) at (0,0);
\coordinate[label=135:{$(010,101)$}] (a01) at (0,2);
\coordinate[label=-45:{$(100,011)$}] (a10) at (2,0);
\coordinate[label=45:{$(110,001)$}] (a11) at (2,2);
\foreach \i in {0,1}
{
\foreach \j in {0,1}
{

\fill (a\i\j) circle[radius=2pt];
}
}

\draw (a00) -- (a01) -- (a11) -- (a10) -- cycle;
\draw (a01) -- (a10);
\draw (a00) -- (a11);
\end{tikzpicture}
\caption{The graph $F_3$}\label{f3}
\end{minipage}
\end{figure}

The paper is organized as follows. Based on  a new concept of  doubly distance resolving sets, we reveal the relationship between resolving sets and doubly resolving sets in Section \ref{ddrs}.  Using the preliminary results in  Section \ref{ddrs}, we  show that $\Psi(H_{n,q})=(2+o(1))n/\log_q n$ and  $\Psi(Q_n)=M(n)+1$ in Section \ref{cube}. Hence, we construct a bridge between  the minimal doubly resolving set problem in hypercubes and the coin weighing problem. Using the result of the coin weighing problem, we prove that $\Psi(Q_n)\le \Psi(Q_{n+1})\le \Psi(Q_n)+1$, which answer an open question in \citep{hertz2020ipbased}.
%This surprising result construct a bride between
In Section \ref{folded}, by exploring the connection of the metric dimension problems between  hypercubes and folded hypercubes, we give a shorter proof for Theorem \ref{zhang2020} and disprove Conjecture  \ref{zhang2020conj}. Some asymptotic results of  $\beta(F_n)$ and $\Psi(F_n)$ are also given. In Section \ref{npc}, we prove that the minimal doubly resolving set problem is NP-hard for split graphs, bipartite graphs and co-bipartite graphs. In Section \ref{cal}, using the bridge between the minimal doubly resolving set problem in hypercubes and the coin weighing problem, we explore algorithms and experimental results for the minimal doubly resolving set problem in hypercubes and get some better upper bounds of $\Psi(Q_n)$ when $n\le 93$.  We also give precise values of $\beta(F_n)$ and $\Psi(F_n)$ when $n\le 9$.

\section{Preliminary results} \label{ddrs}
The following lemma is obvious but helpful to identify a doubly resolving set of a graph.
\begin{lemma}[\citet{kratica2009computing}]\label{small}
Let $S=\{x_1,x_2,\dots,x_m\}$ be a doubly resolving set of $G$. Then for every pair of distinct vertices $u,v\in V(G)$, there exists $x_j\in S$, such that $d_G(u,x_1)-d_G(u,x_j)\neq d_G(v,x_1)-d_G(v,x_j)$.
\end{lemma}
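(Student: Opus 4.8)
The plan is to argue by contradiction, reducing the two-parameter definition of a doubly resolving set (which quantifies over all pairs $\{x,y\}\subseteq S$) to the one-parameter condition in the statement (which fixes the reference landmark $x_1$). Suppose the conclusion fails for some distinct pair $u,v\in V(G)$; that is,
$$d_G(u,x_1)-d_G(u,x_j)=d_G(v,x_1)-d_G(v,x_j)\quad\text{for every }x_j\in S.$$

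First I would rearrange this family of identities to isolate a quantity depending only on the index $j$. Setting $c:=d_G(u,x_1)-d_G(v,x_1)$, the displayed equalities are equivalent to $d_G(u,x_j)-d_G(v,x_j)=c$ for all $j$; in other words, the signed gap $d_G(u,x_j)-d_G(v,x_j)$ is a constant independent of the chosen landmark $x_j$.

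The final step is to feed this constancy back into the definition of a doubly resolving set. For any two landmarks $x_i,x_j\in S$ I would compute
$$d_G(u,x_i)-d_G(u,x_j)=\big(d_G(v,x_i)+c\big)-\big(d_G(v,x_j)+c\big)=d_G(v,x_i)-d_G(v,x_j),$$
so no pair $\{x_i,x_j\}$ doubly resolves $\{u,v\}$. This contradicts the hypothesis that $S$ is a doubly resolving set, completing the argument.

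Since the whole proof is a short algebraic manipulation, I do not anticipate a genuine obstacle; the only point requiring slight care is recognizing that fixing $x_1$ as a common reference loses no power, precisely because the doubly resolving property is governed by whether $d_G(u,\cdot)-d_G(v,\cdot)$ is constant on $S$. Once the differences measured against $x_1$ all agree, a telescoping observation renders every other pairwise difference redundant as well.
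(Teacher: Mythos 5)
Your proof is correct: the observation that the hypothesis forces $d_G(u,x_j)-d_G(v,x_j)$ to equal the constant $c=d_G(u,x_1)-d_G(v,x_1)$ for all $j$, after which every pairwise difference telescopes and no pair in $S$ can doubly resolve $\{u,v\}$, is exactly the standard argument. The paper itself states this lemma without proof (citing \citet{kratica2009computing} and calling it obvious), and your write-up is precisely the justification it leaves implicit.
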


Now we introduce a new concept to reveal the relationship between resolving sets and doubly resolving sets of graphs. Let $G$ be a graph of order $n\ge 2$. Given a vertex $x\in V(G)$, a vertex subset $S$ of $G$ is a {\sl doubly distance resolving set} of $G$ on $x$ if every pair of vertices $\{u,v\}$ with $d_G(u,x)\neq d_G(v,x)$ is doubly resolved by some pair of vertices in $S\cup \{x\}$. In other words, $S=\{x_1,x_2,\dots,x_m\}$ is a doubly distance resolving set of $G$ on $x$ if and only if $d_G(u,x)$ is uniquely determined by the vector $(d_G(u,x)-d_G(u,x_1),d_G(u,x)-d_G(u,x_2),\dots,d_G(u,x)-d_G(u,x_m))$ for any $u\in V(G)$.

\begin{lemma}\label{lpsile}
Let $S$ be a resolving set of $G$. Let $x\in S$ and $T$ be a doubly distance resolving set of $G$ on $x$. Then $S\cup T$ is a doubly resolving set of $G$.
\end{lemma}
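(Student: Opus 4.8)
The plan is to prove the lemma directly by taking an arbitrary pair of distinct vertices $u,v\in V(G)$ and exhibiting a pair inside $S\cup T$ that doubly resolves them. The natural case split is according to whether the distinguished vertex $x$ already separates $u$ and $v$ by distance, i.e.\ whether $d_G(u,x)\neq d_G(v,x)$ or $d_G(u,x)=d_G(v,x)$.

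First I would dispose of the case $d_G(u,x)\neq d_G(v,x)$, where the hypothesis on $T$ does essentially all of the work. By definition, $T$ is a doubly distance resolving set of $G$ on $x$, so any pair $\{u,v\}$ with $d_G(u,x)\neq d_G(v,x)$ is doubly resolved by some pair of vertices in $T\cup\{x\}$. Since $x\in S$, we have $T\cup\{x\}\subseteq S\cup T$, and hence the witnessing pair already lies in $S\cup T$.

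The remaining case $d_G(u,x)=d_G(v,x)$ is where the resolving property of $S$ and the membership $x\in S$ combine. Since $S$ resolves $G$ and $u\neq v$, there is some $y\in S$ with $d_G(u,y)\neq d_G(v,y)$. I would then test the pair $\{x,y\}\subseteq S\subseteq S\cup T$: using $d_G(u,x)=d_G(v,x)$, the equality $d_G(u,x)-d_G(u,y)=d_G(v,x)-d_G(v,y)$ would force $d_G(u,y)=d_G(v,y)$, contradicting the choice of $y$. Thus $\{x,y\}$ doubly resolves $\{u,v\}$.

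Since in either case $\{u,v\}$ is doubly resolved by a pair in $S\cup T$, the set $S\cup T$ is a doubly resolving set. The only real subtlety here—a mild obstacle rather than a hard one—is recognizing why $x$ must belong to the resolving set $S$: precisely when $x$ fails to distinguish $u$ and $v$, it can be recycled as one member of a doubly resolving pair, upgrading an ordinary resolving witness $y$ into the doubly resolving witness $\{x,y\}$. This observation is what makes the two cases fit together and completes the argument.
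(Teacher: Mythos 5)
Your proof is correct and follows essentially the same route as the paper: the same case split on whether $d_G(u,x)=d_G(v,x)$, invoking the doubly distance resolving property of $T$ (noting $T\cup\{x\}\subseteq S\cup T$ since $x\in S$) in the first case, and upgrading a resolving witness $y\in S$ to the doubly resolving pair $\{x,y\}$ in the second. No gaps; nothing further needed.
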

\begin{proof}
    Let $u,v$ be two distinct vertices of $G$. If $d_G(u,x)\neq d_G(v,x)$, then $\{u,v\}$ can be doubly resolved by the definition of $T$. If $d_G(u,x)= d_G(v,x)$, then there is a vertex $y$ such that $d_G(u,y)\neq d_G(v,y)$ by the definition of $S$ and we have $d_G(u,x)-d_G(u,y)\neq d_G(v,x)-d_G(v,y)$. It leads that $\{u,v\}$ is doubly resolved by $\{x,y\}$. Therefore, $S\cup T$ is a doubly resolving set of $G$.
\end{proof}

Let $\phi(G,x)$ denote the minimum cardinality of a doubly distance resolving set of $G$ on $x$ and $\phi(G)=\max\{\phi(G,x):x\in V(G)\}$.
\begin{theorem}\label{psile}
    Let $G$ be a graph of order $n\ge 2$. Then
    \begin{equation*}
    \phi(G)\le \Psi(G)\le \beta(G)+\phi(G).
    \end{equation*}
\end{theorem}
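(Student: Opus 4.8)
The plan is to prove the two inequalities separately, since both should follow quickly from the definitions together with Lemma \ref{lpsile}. The right-hand bound is essentially a cardinality version of Lemma \ref{lpsile}, while the left-hand bound rests on the simple observation that a doubly resolving set is already a doubly distance resolving set on every vertex.

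For the bound $\Psi(G)\le \beta(G)+\phi(G)$, I would start from a metric basis $S$ of $G$, so that $S$ is a resolving set with $|S|=\beta(G)$. Since $G$ has order at least $2$, any resolving set is nonempty, so I can fix some $x\in S$. Let $T$ be a minimum doubly distance resolving set of $G$ on this particular $x$, so that $|T|=\phi(G,x)\le \phi(G)$ by the definition of $\phi(G)$ as a maximum over vertices. Lemma \ref{lpsile} then guarantees that $S\cup T$ is a doubly resolving set of $G$, whence
\[
\Psi(G)\le |S\cup T|\le |S|+|T|\le \beta(G)+\phi(G).
\]

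For the bound $\phi(G)\le \Psi(G)$, I would take a minimum doubly resolving set $S$, so $|S|=\Psi(G)$, and verify that $S$ is a doubly distance resolving set of $G$ on an arbitrary vertex $x$. Indeed, a pair $\{u,v\}$ with $d_G(u,x)\ne d_G(v,x)$ is in particular a pair of distinct vertices, so by definition of a doubly resolving set it is doubly resolved by some pair of vertices in $S$, and $S\subseteq S\cup\{x\}$. Hence $\phi(G,x)\le |S|=\Psi(G)$ holds for every $x\in V(G)$, and taking the maximum over $x$ gives $\phi(G)\le \Psi(G)$.

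The argument presents no serious obstacle; the only points needing a little care are the nonemptiness of $S$ in the upper bound, which is what makes a vertex $x$ available to feed into Lemma \ref{lpsile}, and the direction of the quantifier over $x$ in each case. The upper bound needs only a single convenient choice $x\in S$, whereas the lower bound must be verified for every $x$, in particular for the worst-case vertex defining $\phi(G)$.
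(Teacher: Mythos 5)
Your proposal is correct and follows essentially the same route as the paper's own proof: the upper bound via a metric basis $S$, a chosen $x\in S$, a minimum doubly distance resolving set $T$ on $x$, and Lemma \ref{lpsile}; the lower bound via the observation that every doubly resolving set is a doubly distance resolving set on each vertex $x$, giving $\phi(G,x)\le\Psi(G)$ for all $x$. Your explicit remark that $n\ge 2$ forces $S\ne\emptyset$ is a small point the paper leaves implicit, but otherwise the arguments coincide.
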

\begin{proof}
    For every vertex $x$, since every doubly resolving set is a doubly distance resolving set on $x$ by definition, we have $\phi(G,x)\le \Psi(G)$ and thus $\phi(G)\le \Psi(G)$.

    Let $S$ be a resolving set of $G$ with $|S|=\beta(G)$. Let $x\in S$ and $T$ be a doubly distance resolving set of $G$ on $x$ with  $|T|=\phi(G,x)$. By Lemma \ref{lpsile}, $S\cup T$ is a doubly resolving set of $G$ and then $\Psi(G)\le |S\cup T|\le |S|+|T|=\beta(G)+\phi(G,x)\le \beta(G)+\phi(G)$.
\end{proof}

A function $f:V(G)\rightarrow V(G)$ is an {\sl automorphism} of $G$ if $f$ is bijective such that $uv\in E(G)$ if and only if $f(u)f(v)\in E(G)$. A graph $G$ is called a {\sl vertex-transitive graph} if for every pair of vertices $\{x,y\}$, there is an automorphism $f$ such that $f(x)=y$.

%Our interest in vertex-transitive graphs is based on the following lemma.
\begin{lemma}\label{sym}
Let $G$ be a vertex-transitive graph of order $n\ge 2$. Then the following holds:
\begin{enumerate}[(a)]
\item $\phi(G)=\phi(G,x)$ for every $x\in G$. \label{a}
\item For every $x\in G$, there is a minimum (doubly) resolving set $S$ of $G$ such that $x\in S$.\label{b}
\end{enumerate}

\end{lemma}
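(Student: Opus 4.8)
The plan is to exploit the single structural fact that makes vertex-transitivity useful here: every automorphism $f$ of $G$ is an isometry, i.e. $d_G(f(u),f(v))=d_G(u,v)$ for all $u,v\in V(G)$. This follows by a routine induction on distance from the definition of automorphism, since adjacency is preserved in both directions. Once this is in hand, both parts reduce to transporting an optimal set along a suitable automorphism.

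For part (\ref{a}), recall that $\phi(G)=\max\{\phi(G,y):y\in V(G)\}$, so it suffices to show $\phi(G,x)=\phi(G,y)$ for every pair $x,y$. Fix $x,y$ and, using vertex-transitivity, choose an automorphism $f$ with $f(x)=y$. Let $T=\{x_1,\dots,x_m\}$ be a doubly distance resolving set of $G$ on $x$ with $|T|=\phi(G,x)$. I would then verify that $f(T)=\{f(x_1),\dots,f(x_m)\}$ is a doubly distance resolving set of $G$ on $y$. Using the characterization stated after the definition, this amounts to checking that for any $u'$ the value $d_G(u',y)$ is determined by the vector whose entries are $d_G(u',y)-d_G(u',f(x_i))$. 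Writing $u'=f(u)$ (possible since $f$ is a bijection) and applying the isometry property, each entry equals $d_G(u,x)-d_G(u,x_i)$ and $d_G(u',y)=d_G(u,x)$, so the vector for $u'$ on $y$ coincides with the vector for $u$ on $x$; the defining property of $T$ then transfers verbatim. This gives $\phi(G,y)\le|f(T)|=\phi(G,x)$, and applying the same argument to $f^{-1}$ yields the reverse inequality, hence $\phi(G,x)=\phi(G,y)$.

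For part (\ref{b}), I would start from a minimum resolving set (respectively, minimum doubly resolving set) $S$, which is nonempty since $n\ge 2$, and pick any $z\in S$. Vertex-transitivity supplies an automorphism $f$ with $f(z)=x$, so that $x=f(z)\in f(S)$ and $|f(S)|=|S|$. It then remains to check that $f(S)$ is again a resolving set (respectively doubly resolving set); this is the same bookkeeping as above, since the conditions defining each type of set are phrased entirely in terms of distances, which $f$ preserves. As $f(S)$ has the same cardinality as the minimum set $S$ and retains the relevant property, it is itself a minimum (doubly) resolving set, and it contains $x$.

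I do not expect a genuine obstacle here: the whole argument rests on the isometry property of automorphisms, and the only care needed is to translate the definition of a doubly distance resolving set correctly through $f$ in part (\ref{a}). That translation is slightly more delicate than in part (\ref{b}), because the relevant quantity is the distance-difference vector rather than a plain resolving condition, but it becomes routine once the pushforward $u'=f(u)$ is set up.
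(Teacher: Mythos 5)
Your proof is correct and follows essentially the same route as the paper: transporting an optimal set through an automorphism supplied by vertex-transitivity and using that automorphisms are isometries, so the distance-difference vectors are preserved under the pushforward $u\mapsto f(u)$. The paper's version is just terser --- it proves only part (a), choosing $y$ with $\phi(G,y)=\phi(G)$ and mapping a minimum doubly distance resolving set on $x$ to one on $y$ (your pairwise equality $\phi(G,x)=\phi(G,y)$ is a mild repackaging of this), and it dismisses part (b) as ``similar,'' whereas you carry out the verification explicitly.
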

\begin{proof}
We just prove \ref{a}. The proof of \ref{b} is similar. Let $y$ be the vertex such that $\phi(G,y)=\phi(G)$. Let $S$ be a minimum doubly distance resolving set of $G$ on $x$. By the definition of vertex-transitive graph, there is an automorphism $f:V(G)\rightarrow V(G)$ such that $f(x)=y$. Then $f(S)=\{f(s):s\in S\}$ is a doubly distance resolving set of $G$ on $y$. It leads that $\phi(G)=\phi(G,y)\le |f(S)|=\phi(G,x)\le \phi(G)$, i.e. $\phi(G)=\phi(G,x)$.
\end{proof}

\section{Hamming graphs and hypercubes}\label{cube}
% \subsection{Hamming graph}
Let $x,y$ be two vertices of $H_{n,q}$. It is clear that $f(u)=u-x+y$ is an automorphism of $H_{n,q}$ with $f(x)=y$. Then $H_{n,q}$ is a vertex-transitive graph. By the Lemma \ref{sym}\ref{b}, we have the following corollary, which was proved for $q=2$ on the metric dimension problem in \citep{nikolic2017symmetry}.
\begin{corollary}\label{s0}\label{e0}
There is a minimum (doubly) resolving set $S$ of $H_{n,q}$ such that $\overrightarrow{0}\in S$.\qed
\end{corollary}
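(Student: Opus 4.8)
The plan is to observe that the statement is an immediate specialization of Lemma~\ref{sym}\ref{b}, once we have confirmed that $H_{n,q}$ is vertex-transitive. First I would verify that the map $f(u)=u-x+y$ is an automorphism of $H_{n,q}$ carrying $x$ to $y$. Because the modulo-$q$ subtraction is translation-invariant, we have $f(u)-f(v)=u-v$, so $f$ preserves the number of coordinates in which two vectors differ; that is, $d_{H_{n,q}}(f(u),f(v))=d_{H_{n,q}}(u,v)$. In particular $f$ is a bijection sending edges to edges (and non-edges to non-edges), hence an automorphism, and it satisfies $f(x)=y$. Since $x,y$ were arbitrary, $H_{n,q}$ is vertex-transitive, exactly as noted in the paragraph preceding the corollary.

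Having established vertex-transitivity, I would apply Lemma~\ref{sym}\ref{b} with the distinguished vertex taken to be $\overrightarrow{0}$. The lemma guarantees that for every vertex $x$ of a vertex-transitive graph there is a minimum (doubly) resolving set containing $x$; choosing $x=\overrightarrow{0}$ therefore yields a minimum (doubly) resolving set $S$ of $H_{n,q}$ with $\overrightarrow{0}\in S$, which is the claim.

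There is essentially no obstacle here: the real content was already supplied by Lemma~\ref{sym}, whose proof transports an optimal solution through an automorphism so as to contain a prescribed vertex. The only point requiring verification is the vertex-transitivity of $H_{n,q}$, which reduces to the short isometry computation above. I would finally remark that the parenthetical ``doubly'' covers both assertions at once, since Lemma~\ref{sym}\ref{b} is phrased uniformly for resolving sets and for doubly resolving sets, and the same choice $x=\overrightarrow{0}$ works in either case.
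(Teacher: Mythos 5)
Your proposal is correct and follows exactly the paper's route: the paper likewise notes that $f(u)=u-x+y$ is an automorphism of $H_{n,q}$ sending $x$ to $y$, concludes vertex-transitivity, and derives the corollary immediately from Lemma~\ref{sym}\ref{b} (hence the \qed with no further proof). Your additional verification that $f$ preserves Hamming distance, and thus adjacency, simply fills in the detail the paper labels ``clear.''
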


\begin{lemma}
For every positive integer $n$, $\phi(H_{n,q})\le \min\{q-1,n\}$.
\end{lemma}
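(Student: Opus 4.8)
The plan is to use vertex-transitivity to fix the reference vertex and then to write down two explicit doubly distance resolving sets on $\overrightarrow{0}$, one witnessing the bound $q-1$ and one witnessing the bound $n$. Since $f(u)=u-x+\overrightarrow{0}$ shows that $H_{n,q}$ is vertex-transitive, Lemma~\ref{sym}\ref{a} gives $\phi(H_{n,q})=\phi(H_{n,q},\overrightarrow{0})$, so it suffices to construct doubly distance resolving sets on $\overrightarrow{0}$. For a vertex $u$ write $m_j=m_j(u)=|\{i:u_i=j\}|$ for $0\le j\le q-1$, so that $\sum_{j=0}^{q-1}m_j=n$ and $d_{H_{n,q}}(u,\overrightarrow{0})=n-m_0$. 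By the reformulation in the definition of a doubly distance resolving set, a set $\{x_1,\dots,x_m\}$ works on $\overrightarrow{0}$ precisely when the value $m_0$ (equivalently $d(u,\overrightarrow{0})$) is determined by the vector of differences $d(u,\overrightarrow{0})-d(u,x_j)$.

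For the bound $q-1$, I would take $x_j$ to be the constant vector all of whose coordinates equal $j$, for $j=1,\dots,q-1$. Then $d(u,x_j)=n-m_j$, so $d(u,\overrightarrow{0})-d(u,x_j)=m_j-m_0$. Summing the identity $m_j=m_0+(m_j-m_0)$ over $j=1,\dots,q-1$ and using $\sum_{j=0}^{q-1}m_j=n$ yields $n=qm_0+\sum_{j=1}^{q-1}(m_j-m_0)$, whence $m_0=\bigl(n-\sum_{j=1}^{q-1}(m_j-m_0)\bigr)/q$ is recovered from the differences. Thus these $q-1$ vectors form a doubly distance resolving set on $\overrightarrow{0}$.

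For the bound $n$, I would instead take $x_j=e_j$, the unit vector with a $1$ in coordinate $j$ and $0$ elsewhere, for $j=1,\dots,n$. A direct computation gives $d(u,\overrightarrow{0})-d(u,e_j)=1_{u_j\neq 0}-1_{u_j\neq 1}$, which equals $-1$ if $u_j=0$, equals $1$ if $u_j=1$, and equals $0$ if $u_j\ge 2$. Hence the number of coordinates of $u$ equal to $0$ is exactly the number of indices $j$ for which the $j$-th difference is $-1$; this recovers $m_0$ and therefore $d(u,\overrightarrow{0})=n-m_0$. So these $n$ vectors also form a doubly distance resolving set on $\overrightarrow{0}$.

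Combining the two constructions gives $\phi(H_{n,q},\overrightarrow{0})\le\min\{q-1,n\}$, and by vertex-transitivity this equals $\phi(H_{n,q})$. The arguments are short once the correct test points are identified; the only real work is translating the abstract condition ``the differences determine $d(u,\overrightarrow{0})$'' into a statement about the coordinate-value counts $m_j$. I expect the mild subtlety to lie in the second construction, where coordinates with $u_j\ge 2$ contribute $0$ to every difference yet the count of zero coordinates is still read off cleanly from the $-1$ entries, so no information about $m_0$ is lost.
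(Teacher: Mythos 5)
Your proof is correct. The first half (the bound $q-1$) is essentially identical to the paper's: both take the constant vectors $\overrightarrow{1},\dots,\overrightarrow{q-1}$ and recover $d_{H_{n,q}}(u,\overrightarrow{0})$ as an affine function of the sum of the differences, using $\sum_{j=0}^{q-1}m_j=n$. The second half (the bound $n$) takes a genuinely different route. The paper stays inside the same family of test vertices: assuming $n\le q-1$, it takes $T=\{\overrightarrow{1},\dots,\overrightarrow{n}\}$ and argues by contradiction, showing that two vertices $u,v$ with equal difference vectors but $d(u,\overrightarrow{0})\neq d(v,\overrightarrow{0})$ would satisfy $f(u,c)=f(v,c)+b$ with $b\ge 1$ for all $c\in\{0,1,\dots,n\}$, forcing $\sum_{c=0}^{n}f(u,c)\ge n+1>n$. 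You instead use the $n$ unit vectors $e_1,\dots,e_n$, compute $d(u,\overrightarrow{0})-d(u,e_j)=1_{u_j\neq 0}-1_{u_j\neq 1}\in\{-1,0,1\}$, and read off $m_0$ as the exact count of $-1$ entries; this computation is right (a difference equals $-1$ precisely when $u_j=0$), so $d(u,\overrightarrow{0})=n-m_0$ is recovered directly. Comparing the two: your construction is unconditional in $n$ and $q$ (no case split on whether $n\le q-1$, which the paper needs to make its $n$ constant vectors distinct), and it is constructive --- the distance is reconstructed by an explicit rule rather than excluded by a counting contradiction; the paper's version buys uniformity, since both of its bounds come from the single family of constant vectors and the same counting functions $f(u,c)$. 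Either argument fully establishes the lemma.
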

\begin{proof}
Since $H_{n,q}$ is a vertex-transitive graph, it suffices to prove that $\phi(H_{n,q},\overrightarrow{0})\le \min\{q-1,n\}$ by Lemma \ref{sym}\ref{a}.

Let $S = \{\overrightarrow{1},\dots,\overrightarrow{q-1}\}$. We will prove that $S$ is a doubly distance resolving set of $H_{n,q}$ on $\overrightarrow{0}$. Let $u$ be a vertex of $H_{n,q}$ and $f(u,c)=\sum_{i=1}^n 1_{u_i=c}$, where $1_{u_i=c}=1$ if $u_i=c$ and $1_{u_i=c}=0$ if $u_i\neq c$. Let $a_{u,c}=d_{H_{n,q}}(u,\overrightarrow{0})-d_{H_{n,q}}(u,\overrightarrow{c})$. Then we have
\begin{equation*}
a_{u,c}=\sum_{i=1}^n 1_{u_i\neq 0}-\sum_{i=1}^n 1_{u_i\neq c}=(n-f(u,0))-(n-f(u,c))=f(u,c)-f(u,0).
\end{equation*}
Since $\sum_{c=0}^{q-1} f(u,c)=n$, we have
\begin{equation*}
\sum_{c=1}^{q-1} a_{u,c} = \sum_{c=1}^{q-1} f(u,c) - (q-1)f(u,0)=\sum_{c=1}^{q-1} f(u,c) - (q-1)\left(n-\sum_{c=1}^{q-1} f(u,c)\right)=q\sum_{c=1}^{q-1} f(u,c)-(q-1)n.
\end{equation*}
Then
\begin{equation*}
d_{H_{n,q}}(u,\overrightarrow{0})=\sum_{c=1}^{q-1}f(u,c)=\frac{\sum_{c=1}^{q-1} a_{u,c}+(q-1)n}{q}.
\end{equation*}
Therefore,
$d_{H_{n,q}}(u,\overrightarrow{0})$ is uniquely determined by the vector $(a_{u,1},\dots,a_{u,q-1})$,
%  if there is a vertex $v$ such that $a_{u,c}=a_{v,c}$ for all $c\in \{1,\dots,q-1\}$, then $d_{H_{n,q}}(u,\overrightarrow{0})=d_{H_{n,q}}(v,\overrightarrow{0})$,
i.e. $S$ is a doubly distance resolving set of $H_{n,q}$ on $\overrightarrow{0}$.

Now we assume that $n\le q-1$. Let $T = \{\overrightarrow{1},\dots,\overrightarrow{n}\}$. Then we will prove that $T$ is a doubly distance resolving set of $H_{n,q}$ on $\overrightarrow{0}$. If there is a pair of vertices $\{u,v\}$ such that $d_{H_{n,q}}(u,\overrightarrow{0})\neq d_{H_{n,q}}(v,\overrightarrow{0})$ and $a_{u,c}=a_{v,c}$ for all $c\in \{1,\dots,n\}$, then, without loss of generality, we assume that $b=d_{H_{n,q}}(v,\overrightarrow{0})-d_{H_{n,q}}(u,\overrightarrow{0})\ge 1$. We have $f(u,0)=n-d_{H_{n,q}}(u,\overrightarrow{0})=n-(d_{H_{n,q}}(v,\overrightarrow{0})-b)=f(v,0)+b$ and $f(u,c)=a_{u,c}+f(u,0)=a_{v,c}+f(v,0)+b=f(v,c)+b$ for all $c\in \{1,\dots,n\}$. But it leads that $\sum_{c=0}^n f(u,c)=\sum_{c=0}^n (f(v,c)+b)\ge n+1$, a contradiction.

\end{proof}
By the Theorem \ref{psile}, we have $\beta(H_{n,q})\le \Psi(H_{n,q})\le \beta(H_{n,q})+\min\{q-1,n\}$. Then we immediately get the following theorem by Theorem \ref{jiang2019}.
%  $\beta(H_{n,q})=(2+o(1))n/\log_q n$.
\begin{theorem}\label{hamming}
$\Psi(H_{n,q})= (2+o(1))n/\log_q n$ for all $q\ge 2$. \qed
\end{theorem}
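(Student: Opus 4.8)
The plan is to sandwich $\Psi(H_{n,q})$ between $\beta(H_{n,q})$ and $\beta(H_{n,q})$ plus a lower-order correction, and then invoke Theorem \ref{jiang2019} to pin down the asymptotics on both sides. First I would record the lower bound $\beta(H_{n,q}) \le \Psi(H_{n,q})$, which is the general inequality $\beta(G)\le\Psi(G)$ noted in the introduction (every doubly resolving set is in particular a resolving set). Combined with Theorem \ref{jiang2019} this already gives $\Psi(H_{n,q}) \ge (2+o(1))n/\log_q n$.

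For the matching upper bound I would feed the preceding lemma into Theorem \ref{psile}. That theorem provides $\Psi(G) \le \beta(G) + \phi(G)$ for every graph $G$, and the lemma just proved established $\phi(H_{n,q}) \le \min\{q-1,n\}$. Chaining these yields $\Psi(H_{n,q}) \le \beta(H_{n,q}) + \min\{q-1,n\}$, so that $\Psi$ overshoots $\beta$ by only an additive term $\min\{q-1,n\}$.

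The one point deserving a moment's care, which is the closest thing to an obstacle, is checking that this additive correction is swallowed by the $o(1)$ in the main term rather than perturbing the leading constant $2$. For each fixed $q \ge 2$ this is immediate: once $n > q-1$ we have $\min\{q-1,n\} = q-1$, a constant, whereas $\beta(H_{n,q}) = (2+o(1))n/\log_q n \to \infty$ as $n\to\infty$. Hence $\Psi(H_{n,q}) = \beta(H_{n,q}) + O(1) = (2+o(1))n/\log_q n$, and together with the lower bound this proves the theorem. The substance has really been front-loaded into the lemma bounding $\phi(H_{n,q})$ and into Theorem \ref{psile}; the present statement is then obtained purely by comparing orders of growth, the only bookkeeping being to treat $q$ as fixed (the reading under which Theorem \ref{jiang2019} is stated) so that $\min\{q-1,n\}$ is $O(1)$ relative to the main term.
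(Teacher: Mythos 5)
Your proposal is correct and follows exactly the paper's own route: the chain $\beta(H_{n,q})\le \Psi(H_{n,q})\le \beta(H_{n,q})+\min\{q-1,n\}$ from Theorem \ref{psile} together with the lemma $\phi(H_{n,q})\le\min\{q-1,n\}$, then Theorem \ref{jiang2019} with $q$ fixed so the additive $O(1)$ term is absorbed into the $o(1)$. Your explicit check that the correction does not perturb the leading constant is a point the paper leaves implicit, but the argument is the same.
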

% \subsection{Hypercube}
Now we focus on the special Hamming graph, that is the hypercube $Q_n$. We have 
\begin{equation*}
    d_{Q_n}(u,x)=\sum_{i=1}^n |u_i-x_i|=\sum_{i=1}^n u_i+x_i-2u_ix_i=u\cdot \overrightarrow{1}+x \cdot \overrightarrow{1}-2(u\cdot x).
\end{equation*}
Note that $u\cdot x=\sum_{i=1}^n u_ix_i$ is the inner product of $u$ and $v$. Then
\begin{equation*}
d_{Q_n}(u,\overrightarrow{0})-d_{Q_n}(u,x)=2(u\cdot x)-x \cdot \overrightarrow{1}.
\end{equation*}
% Note that if $S=\{x^1,x^2,\dots,x^m\}$ is a doubly resolving set of $Q_n$ with $x^1=\overrightarrow{0}$, then for each $u\in V(G)$, $x^j\in S$, we have
% \begin{equation*}
%     d_{Q_n}(u,x^1)-d_{Q_n}(u,x^j)=\sum_{i=1}^n(u_i-|u_i-x_i^j|)=\sum_{x_i^j=0}(u_i-u_i)+\sum_{x_i^j=1}(u_i+u_i-1)=2(x^j\cdot u)-(x^j\cdot \overrightarrow{1}).
% \end{equation*}

Firstly, we prove the equivalence between the minimal doubly resolving set problem in hypercubes and the coin weighing problem.
\begin{theorem}\label{lin}
For every positive integer $n$, we have $\Psi(Q_n)=M(n)+1$.
\end{theorem}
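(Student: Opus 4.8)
The plan is to reduce the doubly resolving condition directly to the weighing condition using the scalar-product identity already recorded above, namely $d_{Q_n}(u,\overrightarrow{0})-d_{Q_n}(u,x)=2(u\cdot x)-x\cdot\overrightarrow{1}$. The point is that this identity lets me express the quantity $d(u,\overrightarrow{0})-d(u,x)$, on which Lemma \ref{small} is built, purely in terms of $u\cdot x$, with the additive constant $-x\cdot\overrightarrow{1}$ and the factor $2$ being irrelevant to an inequality test. I would deliberately avoid routing through Theorem \ref{psile} and $\phi(Q_n)$, since that only yields bounds of the form $\beta(Q_n)\le\Psi(Q_n)\le\beta(Q_n)+\phi(Q_n)$ and, combined with Theorem \ref{sebh02004}, loses the exact ``$+1$''; the direct argument recovers equality.

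For the lower bound, I would invoke Corollary \ref{e0} to fix a minimum doubly resolving set $S$ with $\overrightarrow{0}\in S$, writing $x_1=\overrightarrow{0}$ and $S=\{x_1,\dots,x_m\}$ with $m=\Psi(Q_n)$. Applying Lemma \ref{small}: for every distinct $u,v$ there is $x_j$ with $d(u,x_1)-d(u,x_j)\ne d(v,x_1)-d(v,x_j)$. Substituting the identity, the two sides equal $2(u\cdot x_j)-x_j\cdot\overrightarrow{1}$ and $2(v\cdot x_j)-x_j\cdot\overrightarrow{1}$ respectively, so the inequality is exactly $u\cdot x_j\ne v\cdot x_j$. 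Since the choice $x_j=x_1=\overrightarrow{0}$ gives $u\cdot x_1=v\cdot x_1=0$, the witnessing index must satisfy $x_j\in S\setminus\{\overrightarrow{0}\}$; hence $S\setminus\{\overrightarrow{0}\}$ is a weighing strategy and $\Psi(Q_n)-1\ge M(n)$.

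For the upper bound I would start from a minimum weighing strategy $S'$ with $|S'|=M(n)$ (which may be assumed not to contain $\overrightarrow{0}$) and set $S=\{\overrightarrow{0}\}\cup S'$. Given distinct $u,v$, the defining property of $S'$ furnishes $x\in S'$ with $u\cdot x\ne v\cdot x$; running the same identity forward shows $d(u,\overrightarrow{0})-d(u,x)\ne d(v,\overrightarrow{0})-d(v,x)$, so $\{\overrightarrow{0},x\}$ doubly resolves $\{u,v\}$ and $S$ is a doubly resolving set of size $M(n)+1$. Combining the two inequalities yields $\Psi(Q_n)=M(n)+1$.

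The main obstacle is really a bookkeeping subtlety rather than a deep difficulty: accounting precisely for the extra ``$+1$''. One must check that the pinned vertex $\overrightarrow{0}$ carries no weighing information, since its scalar product with everything vanishes, so it is genuinely an extra element beyond any weighing strategy, and that Corollary \ref{e0} legitimately permits the assumption $\overrightarrow{0}\in S$ when bounding the minimum. It is also worth confirming that the two covering conditions match up: as $\{u,v\}$ ranges over distinct vertex pairs the differences $u-v$ range over all of $\{-1,0,1\}^n\setminus\{\overrightarrow{0}\}$, which is exactly the family of coin-distribution differences a weighing strategy must separate, so nothing is lost in either direction of the reduction.
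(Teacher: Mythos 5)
Your proposal is correct and takes essentially the same route as the paper's proof: both directions rest on the identity $d_{Q_n}(u,\overrightarrow{0})-d_{Q_n}(u,x)=2(u\cdot x)-x\cdot \overrightarrow{1}$, with the lower bound obtained via Corollary \ref{e0} and Lemma \ref{small} by deleting $\overrightarrow{0}$ from a minimum doubly resolving set, and the upper bound by adjoining $\overrightarrow{0}$ to a minimum weighing strategy. The only difference is cosmetic: you argue both directions directly, while the paper phrases them as proofs by contradiction.
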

\begin{proof}
We first prove that $M(n)\le \Psi(Q_n)-1$. By Corollary \ref{e0}, let $S$ be a doubly resolving set of $Q_n$ such that $\overrightarrow{0}\in S$ and $|S|=\Psi(Q_n)$. Now we need to prove that $S'=S\backslash \{\overrightarrow{0}\}$ is a weighing strategy. Suppose not, then there are two distinct vertices $u,v\in V(Q_n)$, such that $u\cdot x=v\cdot x$ for each $x\in S'$. It leads that $d_{Q_n}(u,\overrightarrow{0})-d_{Q_n}(u,x)=2(u\cdot x)-(x\cdot \overrightarrow{1})=2(v\cdot x)-(x\cdot \overrightarrow{1})=d_{Q_n}(v,\overrightarrow{0})-d_{Q_n}(v,x)$ for each $x\in S'$. By Lemma \ref{small}, $S$ is not a doubly resolving set, a contradiction.

Now we prove that $\Psi(Q_n)\le M(n)+1$. Let $S$ be a weighing strategy such that $|S|=M(n)$. Then we prove that $S'=S\cup \{\overrightarrow{0}\}$ is a doubly resolving set. Suppose not, then there are two distinct vertices $u,v\in V(Q_n)$, such that $d_{Q_n}(u,\overrightarrow{0})-d_{Q_n}(u,x)=d_{Q_n}(v,\overrightarrow{0})-d_{Q_n}(v,x)$ for each $x\in S$. Then $u\cdot x = (d_{Q_n}(u,\overrightarrow{0})-d_{Q_n}(u,x)+x\cdot \overrightarrow{1})/2=(d_{Q_n}(v,\overrightarrow{0})-d_{Q_n}(v,x)+x\cdot \overrightarrow{1})/2=v\cdot x$, a contradiction.
\end{proof}

By Theorem \ref{betaPsi}, it is easy to know that
\begin{equation*}
\beta(Q_n)\le \beta(Q_{n+1})=\beta(Q_{n}\square K_2)\le \beta(Q_n)+\Psi(K_2)-1=\beta(Q_n)+1.
\end{equation*}
But it is an open problem whether $\Psi(Q_n)$ has the similar property (see \citet{hertz2020ipbased}). Since we know that the coin weighing problem and minimal doubly resolving set problem in hypercubes are equivalent, it is not difficult to  answer this open problem using the result of the coin weighing problem.
\begin{theorem}\label{nn1}
For every positive integer $n$, we have $\Psi(Q_n)\le \Psi(Q_{n+1})\le \Psi(Q_n)+1$.
\end{theorem}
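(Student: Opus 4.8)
The plan is to route everything through the equivalence $\Psi(Q_n)=M(n)+1$ established in Theorem \ref{lin}. Substituting this identity at both $n$ and $n+1$, the asserted chain $\Psi(Q_n)\le \Psi(Q_{n+1})\le \Psi(Q_n)+1$ becomes precisely $M(n)\le M(n+1)\le M(n)+1$. So it suffices to prove these two inequalities for the coin weighing function, and the most convenient setting is to argue directly with weighing strategies, i.e. sets $S\subseteq\{0,1\}^n$ on which the map $u\mapsto (u\cdot x)_{x\in S}$ is injective. This reduction is the main simplifying move; once made, both inequalities follow from elementary transfer constructions between dimensions $n$ and $n+1$.

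For the lower bound $M(n)\le M(n+1)$, I would take an optimal weighing strategy $S$ for $n+1$ coins (so $|S|=M(n+1)$) and transfer it to $n$ coins by projection, using the embedding $\{0,1\}^n\hookrightarrow\{0,1\}^{n+1}$, $u\mapsto (u,0)$. Given distinct $u,v\in\{0,1\}^n$, the extensions $(u,0)$ and $(v,0)$ are distinct, hence separated by some $x\in S$. Since the last coordinate of these extensions is $0$, writing $\hat x$ for the first $n$ coordinates of $x$ gives $(u,0)\cdot x=u\cdot\hat x$ and $(v,0)\cdot x=v\cdot\hat x$, so $u\cdot\hat x\ne v\cdot\hat x$. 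Thus $\{\hat x:x\in S\}$ is a weighing strategy for $n$ coins of size at most $|S|$, yielding $M(n)\le M(n+1)$.

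For the upper bound $M(n+1)\le M(n)+1$, I would take an optimal strategy $S$ for $n$ coins, pad every vector with a zero in the new coordinate, and adjoin a single extra weighing that reads off only the $(n+1)$-st coin. Concretely I set $S'=\{(x,0):x\in S\}\cup\{e\}$, where $e$ is the $(n+1)$-dimensional vector whose only nonzero entry is its last coordinate. To verify $S'$ separates any two distinct $u=(\hat u,u_{n+1})$ and $v=(\hat v,v_{n+1})$ in $\{0,1\}^{n+1}$, I split into cases: if $\hat u\ne\hat v$, some $x\in S$ separates them and $(x,0)$ inherits this since the padded coordinate contributes nothing; if $\hat u=\hat v$ but $u_{n+1}\ne v_{n+1}$, every $(x,0)$ gives equal readings, but $e$ outputs $u_{n+1}$ versus $v_{n+1}$ and so separates them. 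Hence $S'$ is a weighing strategy for $n+1$ coins of size $|S|+1=M(n)+1$, giving $M(n+1)\le M(n)+1$.

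I do not expect a serious obstacle here: both constructions are routine. The only point needing care is the case analysis in the upper bound, where one must confirm that the single extra vector $e$ catches exactly those pairs differing solely in the last coin, which the padded part of $S'$ is blind to. Combining the two inequalities on $M$ and re-substituting $\Psi(Q_m)=M(m)+1$ for $m\in\{n,n+1\}$ then closes the argument.
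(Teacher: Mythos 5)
Your proposal is correct and follows essentially the same route as the paper: reduce via Theorem \ref{lin} to showing $M(n)\le M(n+1)\le M(n)+1$, prove the first inequality by truncating an optimal $(n+1)$-coin strategy after embedding $u\mapsto(u,0)$, and prove the second by zero-padding an optimal $n$-coin strategy and adjoining the single weighing that reads the last coin. The only difference is cosmetic (you split cases on whether the first $n$ coordinates agree, while the paper splits on the last coordinate), so there is nothing to fix.
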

\begin{proof}
By Theorem \ref{lin}, we need to prove that $M(n)\le M(n+1)\le M(n)+1$.

% We have $M(n)\le M(n+1)$ since we can put a good coin in the $(n+1)$-th position and we get a weighing strategy for $n$ by using the  weighing strategy for $n+1$.

Let $u=(u_1,\dots,u_n)$ and $v=(v_1,\dots,v_n)$ be two distinct distribution of defective coins. Let $S$ be a weighing strategy for $n+1$ coins such that $|S|=M(n+1)$. Then there exists $x=(x_1,\dots,x_{n+1})\in S$, such that $u'\cdot x\neq v'\cdot x$, where $u' = (u_1,\dots,u_n,0)$ and  $v'=(v_1,\dots,v_n,0)$. Let $x'=(x_1,\dots,x_n)$. Then $u \cdot x'=u' \cdot x\neq  v'\cdot x=v\cdot x'$. Thus, $S'=\{x'=(x_1,\dots,x_n):x=(x_1,\dots,x_{n+1})\in S\}$ is a weighing strategy for $n$ coins, i.e. $M(n)\le M(n+1)$.

% We have $M(n+1)\le M(n)+1$ since we get a weighing strategy for $n+1$ by using the weighing strategy for $n$ and weighing the coin in the $(n+1)$-th position.

Let $u=(u_1,\dots,u_{n+1})$ and $v=(v_1,\dots,v_{n+1})$ be two distinct distribution of defective coins. If $u_{n+1}\neq v_{n+1}$, then $u\cdot y\neq v\cdot y$ where $y=(0,\dots,0,1)$. Now we assume that $u_{n+1}= v_{n+1}$. Let $S$ be a weighing strategy for $n$ coins such that $|S|=M(n)$. Then there exists $x=(x_1,\dots,x_n)\in S$, such that $u'\cdot x\neq v'\cdot x$, where $u' = (u_1,\dots,u_n)$ and  $v'=(v_1,\dots,v_n)$. Let $x'=(x_1,\dots,x_n,0)$. Then $u \cdot x' =u' \cdot x\neq v' \cdot x = v\cdot x'$. Thus, $S'=y\cup \{x'=(x_1,\dots,x_n,0):x=(x_1,\dots,x_n)\in S\}$ is a weighing strategy for $n+1$ coins, i.e. $M(n+1)\le M(n)+1$.
\end{proof}

\section{Folded hypercubes}\label{folded}
% Firstly, let's prove the lower bound of $\beta(F_n)$.
It is clear that $f(\fold{u})=\fold{u-x+y}$ is an automorphism of $F_n$ with $f(\fold{x})=\fold{x-x+y}=\fold{y}$. Then $F_n$ is a vertex-transitive graph. In this section, we will use some precise values of $\beta(Q_n)$ and $\beta(F_n)$ that have calculated in \citep{bailey2015metric,caceres2007metric} (see Table \ref{qn9}).
% (see Table \ref{qn9})
\begin{table}[H]
    \centering
    \begin{tabular}{cccccccccc}
    \hline
    $n$   & 1  & 2 & 3 & 4 & 5 & 6 & 7 & 8  & 9 \\
    $\beta(Q_n)$ & 1 & 2 & 3 & 4 & 4 & 5 & 6 & 6 & 7\\
    $\beta(F_n)$ & - & 1 & 3 & 6 & 4 & 8 & 6 & 11 & -\\
    \hline
    \end{tabular}
    \caption{$\beta(Q_n)$ and $\beta(F_n)$, $n\le 9$}\label{qn9}
\end{table}

\begin{lemma}\label{ge}
For every integer $n\ge 3$, $\beta(F_n)\ge \beta(Q_n)$.
\end{lemma}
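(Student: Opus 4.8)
The plan is to start from a minimum resolving set of $F_n$ and \emph{lift} it to a resolving set of $Q_n$ of the same cardinality, which immediately yields $\beta(Q_n)\le \beta(F_n)$. Concretely, let $S=\{\fold{x_1},\dots,\fold{x_m}\}$ be a resolving set of $F_n$ with $m=\beta(F_n)$, and pick one representative from each class to form $\tilde S=\{x_1,\dots,x_m\}\subseteq V(Q_n)$. Distinct classes yield distinct representatives (if $x_i=x_j$ then $\fold{x_i}=\fold{x_j}$, so $i=j$), hence $|\tilde S|=m$, and the whole task reduces to proving that $\tilde S$ resolves $Q_n$.

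First I would dispatch the pairs $u,v$ with $\fold{u}\neq\fold{v}$. The distance identity $d_{F_n}(\fold{w},\fold{x_i})=\min\{d_{Q_n}(w,x_i),\,n-d_{Q_n}(w,x_i)\}$ shows that $d_{Q_n}(u,x_i)=d_{Q_n}(v,x_i)$ forces $d_{F_n}(\fold{u},\fold{x_i})=d_{F_n}(\fold{v},\fold{x_i})$. So if no vertex of $\tilde S$ separated $u$ and $v$ in $Q_n$, then $S$ would fail to resolve the distinct classes $\fold{u}$ and $\fold{v}$ in $F_n$, a contradiction. Thus every non-opposite pair is resolved by $\tilde S$, essentially for free.

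The delicate case, and the one where $n\ge 3$ enters, is the \emph{opposite} pairs $\{u,\overline u\}$, which collapse to a single vertex of $F_n$ and so are not directly controlled by $S$. Here $d_{Q_n}(\overline u,x_i)=n-d_{Q_n}(u,x_i)$, so $\tilde S$ fails to resolve $\{u,\overline u\}$ exactly when $d_{Q_n}(u,x_i)=n/2$ for every $i$, which can only occur for even $n$. I would rule this out by a neighbour argument: if $u$ were such a vertex, then for any two coordinates $j\neq k$ the neighbours $u+e_j$ and $u+e_k$ satisfy $d_{Q_n}(u+e_j,x_i),\,d_{Q_n}(u+e_k,x_i)\in\{n/2-1,\,n/2+1\}$, whence $d_{F_n}(\fold{u+e_j},\fold{x_i})=d_{F_n}(\fold{u+e_k},\fold{x_i})=n/2-1$ for all $i$. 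Then $\fold{u+e_j}$ and $\fold{u+e_k}$ would have identical distance vectors to $S$; yet for $n\ge 3$ they are distinct vertices of $F_n$, since $\fold{u+e_j}=\fold{u+e_k}$ would require $e_j+e_k=\overrightarrow 1$, impossible as $e_j+e_k$ has weight $2<n$. This contradicts $S$ resolving $F_n$, so no such bad opposite pair exists.

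I expect this last step, the opposite pairs in even dimension, to be the only genuine obstacle: the non-opposite pairs follow immediately from the distance formula, and the odd case is automatic because $n/2$ is never an attainable integer distance. Combining both cases, $\tilde S$ resolves $Q_n$, giving $\beta(Q_n)\le|\tilde S|=\beta(F_n)$. One could, if convenient, invoke the vertex-transitivity of $F_n$ (Lemma \ref{sym}) to normalize $\overrightarrow 0\in\tilde S$, but the argument above does not appear to require it; note also that the hypothesis $n\ge 3$ is sharp, as the table gives $\beta(F_2)=1<2=\beta(Q_2)$.
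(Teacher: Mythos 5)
Your proof is correct and follows essentially the same route as the paper: lift a minimum resolving set of $F_n$ to representatives in $Q_n$, dispatch non-opposite pairs via the identity $d_{F_n}(\fold{u},\fold{x})=\min\{d_{Q_n}(u,x),n-d_{Q_n}(u,x)\}$, and kill the would-be bad opposite pairs (all distances $n/2$, $n$ even) by exhibiting two neighbours whose folded distance vectors are constantly $n/2-1$. The only cosmetic difference is that the paper normalizes $u=\overrightarrow{0}$ via vertex-transitivity and uses the explicit pair $(1,0,\dots,0)$, $(0,1,0,\dots,0)$, whereas your direct argument with $u+e_j$, $u+e_k$ avoids that normalization; both hinge on the same weight-$2<n$ observation that makes $n\ge 3$ sharp.
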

\begin{proof}
Let $f:V(F_n)\rightarrow V(Q_n)$ be a function such that $f(\fold{x})=x$ if $x_1=0$ and $f(\fold{x})=\overline{x}$ if $x_1=1$. If $S$ is a vertex set of $F_n$, then $f(S)=\{f(\fold{x}):\fold{x}\in S\}$. It suffices to prove that if $S$ is a resolving set of $F_n$, then $f(S)$ is a resolving set of $Q_n$.

For every two distinct vertices $u,v\in V(Q_n)$ with $u\neq \overline{v}$, since $S$ is a resolving set of $F_n$, there is a vertex $\fold{x}\in S$, such that $d_{F_n}(\fold{u},\fold{x})\neq d_{F_n}(\fold{v},\fold{x})$ with $f(\fold{x})=x$. Let $d_1=d_{F_n}(\fold{u},\fold{x})$ and $d_2=d_{F_n}(\fold{v},\fold{x})$. Then $d_{Q_n}(u,x)\in \{d_1,n-d_1\}$ and $d_{Q_n}(v,x)\in \{d_2,n-d_2\}$. If $d_{Q_n}(u,x)=d_{Q_n}(v,x)$, since $d_1\neq d_2$, we have $d_1+d_2=n$. Since $\max\{d_1,d_2\}\le n/2$, we have $d_1=d_2=n/2$, a contradiction. Therefore, $d_{Q_n}(u,x)\neq d_{Q_n}(v,x)$.

Now we consider the case that $u=\overline{v}$. Since $F_n$ is a vertex-transitive graph, we can assume that $u=\overrightarrow{0},v=\overrightarrow{1}$. Note that for each $y\in V(Q_n)$, we have $d_{Q_n}(u,y)+d_{Q_n}(v,y)=n$. If there is a vertex $x\in f(S)$ such that $d_{Q_n}(u,x)\neq d_{Q_n}(v,x)$, then we have done. Otherwise, for each $x\in f(S)$, we have $d_{Q_n}(u,x)=d_{Q_n}(v,x)=n/2$. It leads that $n$ is even.  Let $S'=\{\fold{x}:\sum_{i=1}^n x_i=n/2\}$. Then $S\subseteq S'$. However, $\{\fold{s},\fold{t}\}$ cannot be resolved by $S'$, where $s=(1,0,0,\dots,0)$ and $t=(0,1,0,\dots,0)$, since $d_{F_n}(\fold{s},\fold{x})=d_{F_n}(\fold{t},\fold{x})=n/2-1$ for each $\fold{x}\in S'$. Then $S'$ is not a resolving set of $F_n$, a contradiction.
\end{proof}

What is more, we prove that equality holds if $n\ge 3$ is odd.
\begin{lemma}\label{oddfold}
If $n\ge 3$ is odd, then $\beta(F_n)=\beta(Q_n)$.
\end{lemma}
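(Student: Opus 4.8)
The plan is to pair this lemma with Lemma \ref{ge}, which already gives $\beta(F_n)\ge\beta(Q_n)$; it therefore suffices to prove the reverse inequality $\beta(F_n)\le\beta(Q_n)$ under the hypothesis that $n$ is odd. The natural construction is to \emph{push a resolving set of $Q_n$ down to $F_n$}: I would take a resolving set $S$ of $Q_n$ with $|S|=\beta(Q_n)$ and set $S'=\{\fold{x}:x\in S\}$, a vertex set of $F_n$ with $|S'|\le|S|=\beta(Q_n)$. The whole lemma then reduces to showing that this projected set $S'$ resolves $F_n$, since that yields $\beta(F_n)\le|S'|\le\beta(Q_n)$.

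To analyze when $\fold{x}$ resolves a pair $\fold{u}\neq\fold{v}$, I would write $a=d_{Q_n}(u,x)$ and $b=d_{Q_n}(v,x)$, so that $d_{F_n}(\fold{u},\fold{x})=\min\{a,n-a\}$ and $d_{F_n}(\fold{v},\fold{x})=\min\{b,n-b\}$. These two are equal precisely when $a=b$ or $a+b=n$, so $\fold{x}$ \emph{fails} to resolve $\fold{u},\fold{v}$ exactly in those two cases. The oddness of $n$ enters immediately: $a=b$ and $a+b=n$ cannot hold simultaneously, since that forces $2a=n$. On the other side, because $S$ resolves $Q_n$ and $\fold{u}\neq\fold{v}$ guarantees both $u\neq v$ and $u\neq\overline{v}$, the set $S$ resolves both pairs $(u,v)$ and $(u,\overline{v})$; using $d_{Q_n}(\overline{v},x)=n-b$, the latter supplies an $x\in S$ with $a+b\neq n$, while the former supplies an $x\in S$ with $a\neq b$.

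The crux, and the main obstacle, is the "antipodal collision'' $a+b=n$, which has no counterpart in $Q_n$ and is what the folding operation creates. I would argue by contradiction: assume no $x\in S$ resolves $\fold{u},\fold{v}$, and partition $S=A\sqcup B$ with $A=\{x\in S:a=b\}$ and $B=\{x\in S:a+b=n\}$ (disjoint, as noted, because $n$ is odd). The resolved pair $(u,v)$ then produces a witness in $B$ and the resolved pair $(u,\overline{v})$ produces a witness in $A$, so both $A$ and $B$ are nonempty. Writing $w(x)=x\cdot\overrightarrow{1}$ for the Hamming weight, I would invoke the parity identity $d_{Q_n}(u,x)\equiv w(u)+w(x)\pmod 2$: a member of $A$ forces $w(u)\equiv w(v)\pmod 2$, whereas a member of $B$ forces $a$ and $b$ to have opposite parities, i.e. $w(u)\not\equiv w(v)\pmod 2$. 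These contradict each other, so some $\fold{x}\in S'$ must resolve $\fold{u},\fold{v}$ after all.

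Two remarks on why I expect this to go through cleanly. First, $\overrightarrow{0}\in S$ is not needed — the parity obstruction does all the work — though Corollary \ref{e0} could be used to normalize $S$ if convenient. Second, the argument pinpoints why oddness is essential: for even $n$, the two representatives of a folded vertex share the same weight parity, the sets $A$ and $B$ need no longer be disjoint, and the parity of $w(u)-w(v)$ is no longer a clean obstruction; this is consistent with the strict jumps $\beta(F_n)>\beta(Q_n)$ visible at even $n$ in Table \ref{qn9}.
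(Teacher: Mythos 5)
Your proposal is correct and takes essentially the same route as the paper: both push a minimum resolving set $S$ of $Q_n$ down to $F_n$ via $x\mapsto \fold{x}$, observe that $\fold{x}$ fails on $\{\fold{u},\fold{v}\}$ only when $d_{Q_n}(u,x)=d_{Q_n}(v,x)$ or $d_{Q_n}(u,x)+d_{Q_n}(v,x)=n$, and derive a contradiction from the parity identity $d_{Q_n}(u,x)\equiv u\cdot\overrightarrow{1}+x\cdot\overrightarrow{1}\pmod 2$ combined with the oddness of $n$. The only difference is bookkeeping: the paper normalizes representatives so the common folded distance is attained by $u$ and $v$ themselves and then uses a single $Q_n$-resolving witness $y$ with $d_1+d_2=n$, whereas you avoid that normalization by partitioning the failing vertices into the $a=b$ and $a+b=n$ classes and invoking $Q_n$-resolution of both $(u,v)$ and $(u,\overline{v})$ --- an equivalent reorganization of the same argument.
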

\begin{proof}
By Lemma \ref{ge}, it suffices to prove that $\beta(F_n)\le \beta(Q_n)$. Let $g:V(Q_n)\rightarrow V(F_n)$ be a function such that $g(x)=\fold{x}$. If $S$ is a vertex set of $Q_n$, then $g(S)=\{\fold{x}:x\in S\}$.  It suffices to prove that if $S$ is a resolving set of $Q_n$, then $g(S)$ is a resolving set of $F_n$.

For every $x\in S$ and distinct vertices $\fold{u},\fold{v}\in V(F_n)$, if $\fold{x}$ resolves $\fold{u},\fold{v}$, we have done. Otherwise, without loss of generality, we assume that $d_{Q_n}(u,x)=d_{F_n}(\fold{u},\fold{x})=d_{F_n}(\fold{v},\fold{x})=d_{Q_n}(v,x)$. Then $d_{Q_n}(u,x)-d_{Q_n}(v,x)=u\cdot \overrightarrow{1}-v\cdot \overrightarrow{1}-(2u\cdot x-2v\cdot x)=0$. It shows that $u\cdot \overrightarrow{1}+v\cdot \overrightarrow{1}$ is even.

Since $S$ is a resolving set of $Q_n$, there is a vertex $y\in S$ such that $d_{Q_n}(u,y)\neq d_{Q_n}(v,y)$. Let $d_1=d_{Q_n}(u,y)$ and $d_2=d_{Q_n}(v,y)$. Then $d_{F_n}(\fold{u},\fold{y})=\min\{d_1,n-d_1\}$ and $d_{F_n}(\fold{v},\fold{y})=\min\{d_2,n-d_2\}$. If $\fold{y}$ does not resolve $\fold{u}$ and $\fold{v}$, then $d_1+d_2=n$. Besides, $d_1+d_2=2y\cdot \overrightarrow{1}+u\cdot \overrightarrow{1}+v\cdot \overrightarrow{1}-(2u\cdot y+2v\cdot y)$. It leads that $u\cdot \overrightarrow{1}+v\cdot \overrightarrow{1}$ is odd, a contradiction.
\end{proof}

If $n$ is even, equality does not hold in general, such as $\beta(F_4)=6\neq 4=\beta(Q_4)$. If $n$ is even, the following lemma provides the upper bound.
\begin{lemma}\label{evenfold}
For every positive integer $n$, $\beta(F_{n+1})\le 2\beta(Q_n)$.
\end{lemma}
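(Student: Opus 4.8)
The plan is to exhibit a resolving set of $F_{n+1}$ of size at most $2\beta(Q_n)$, built from a metric basis of $Q_n$. First I would set up a convenient identification of $V(F_{n+1})$ with $V(Q_n)$. Every class $\fold{u}$ of $F_{n+1}$ (with $u\in V(Q_{n+1})$) contains exactly one representative whose last coordinate is $0$, so writing such a representative as $(w,0)$ with $w\in V(Q_n)$ gives a bijection $w\mapsto \fold{(w,0)}$. For two such representatives we have $d_{Q_{n+1}}((w,0),(w',0))=d_{Q_n}(w,w')$, hence
\[
d_{F_{n+1}}(\fold{(w,0)},\fold{(w',0)})=\min\{d_{Q_n}(w,w'),\,(n+1)-d_{Q_n}(w,w')\}=:\rho(w,w').
\]

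Next, let $S$ be a metric basis of $Q_n$, so $|S|=\beta(Q_n)$, and set $T=\{\fold{(x,0)},\fold{(\overline{x},0)}:x\in S\}$, where $\overline{x}=x+\overrightarrow{1}$ is the complement of $x$ in $Q_n$. Since $(x,0)$ and $(\overline{x},0)$ always lie in distinct classes of $F_{n+1}$, we get $|T|\le 2\beta(Q_n)$. I claim $T$ is a resolving set of $F_{n+1}$, which proves the lemma.

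The heart of the argument is a reconstruction fact: for any $x$ and any $w$, the pair of folded distances $(\rho(w,x),\rho(w,\overline{x}))$ determines $k:=d_{Q_n}(w,x)$ uniquely. Writing $p(k)=\min\{k,n+1-k\}$ and, using $d_{Q_n}(w,\overline{x})=n-k$, $q(k)=\rho(w,\overline{x})=\min\{n-k,k+1\}$, I would argue that the value $p(k)$ alone pins $k$ down to the at most two candidates $\{p(k),\,n+1-p(k)\}\cap\{0,\dots,n\}$ (since $p$ is symmetric about $(n+1)/2$), and then $q$ separates them: a short case analysis on where $k$ sits relative to $(n+1)/2$ shows that if $p(k)=p(k')$ with $k\ne k'$ (so $k'=n+1-k$), then $q(k)$ and $q(k')$ differ. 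Granting this, suppose $w\ne w'$ are not resolved by $T$, i.e. $\rho(w,t)=\rho(w',t)$ for every $t\in T$. Applying the reconstruction fact to each $x\in S$ and its complement $\overline{x}$ yields $d_{Q_n}(w,x)=d_{Q_n}(w',x)$ for all $x\in S$; since $S$ resolves $Q_n$, this forces $w=w'$, a contradiction.

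I expect the main obstacle to be the tie-breaking case analysis, in particular verifying that $q$ genuinely distinguishes $k$ from $n+1-k$ in the borderline range $k\approx(n+1)/2$, together with the degenerate values $k=0$ (one candidate) and, when $n$ is odd, $k=(n+1)/2$ (where the two candidates coincide and no tie-breaking is needed). Everything else is bookkeeping: confirming the identification and the distance formula $\rho$, and observing that the doubling of landmarks to $\{x,\overline{x}\}$ is exactly what produces the factor $2$.
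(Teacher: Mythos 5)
Your proposal is correct and is essentially the paper's own proof: since $\fold{(\overline{x},0)}=\fold{(x,1)}$, your landmark set $T$ coincides exactly with the paper's $S'=\{\fold{x^0},\fold{x^1}:x\in S\}$, and your reconstruction fact that the pair $\bigl(\min\{k,n+1-k\},\min\{k+1,n-k\}\bigr)$ determines $k$ is precisely the paper's tie-breaking step, in which failure of both $\fold{x^0}$ and $\fold{x^1}$ to resolve would force $d_1+d_2=n+1$ and $(d_1+1)+(d_2+1)=n+1$ simultaneously, a contradiction. The only cosmetic difference is that the paper phrases this as a direct contradiction for one unresolved pair rather than as injectivity of $k\mapsto(p(k),q(k))$, and your deferred case analysis does check out (including the degenerate cases $k=0$ and $k=(n+1)/2$ for odd $n$).
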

\begin{proof}
For each $x\in V(Q_n)$, let $x^0=(x_1,\dots,x_n,0)$ and $x^1=(x_1,\dots,x_n,1)$ be the two vertices in $V(Q_{n+1})$. It suffices to prove that if $S$ is a resolving set of $Q_n$, then $S'=\{\fold{x^0},\fold{x^1}:x\in S\}$ is a resolving set of $F_{n+1}$.

Let $\fold{u},\fold{v}\in V(F_n)$ be two distinct vertices. Without loss of generality, we assume that $u_{n+1}=v_{n+1}=0$. Let $u'=(u_1,\dots,u_n)$ and $v'=(v_1,\dots,v_n)$. Then since $S$ is a resolving set of $Q_n$, there is a vertex $x\in S$ such that $d_{Q_n}(u',x)\neq d_{Q_n}(v',x)$. Let $d_1=d_{Q_n}(u',x)$ and $d_2=d_{Q_n}(v',x)$. Then 
$d_{F_{n+1}}(\fold{u},\fold{x^0})=\min\{d_1,n+1-d_1\}$, $d_{F_{n+1}}(\fold{u},\fold{x^1})=\min\{d_1+1,n-d_1\}$, $d_{F_{n+1}}(\fold{v},\fold{x^0})=\min\{d_2,n+1-d_2\}$, $d_{F_{n+1}}(\fold{v},\fold{x^1})=\min\{d_2+1,n-d_2\}$. If $\fold{x^0}$ or $\fold{x^1}$ resolves $\{\fold{u},\fold{v}\}$, we have done. Otherwise, we have $d_1+d_2=n+1$ and $(d_1+1)+(d_2+1)=n+1$, a contradiction.
\end{proof}

% $d_{Q_{n+1}}(u,x^0)=d_1$, $d_{Q_{n+1}}(u,x^1)=d_1+1$, $d_{Q_{n+1}}(v,x^0)=d_2$, $d_{Q_{n+1}}(v,x^1)=d_2+1$
% by the proof of Lemma \ref{oddfold},

Recall that $\beta(Q_{n+1})\le \beta(Q_n)+1$ and hence $\beta(Q_{n+m})\le \beta(Q_n)+m$. Since $\beta(Q_5)=4$, $\beta(Q_n)\le \beta(Q_5)+n-5=n-1$ if $n\ge 5$. By Lemma \ref{oddfold},  $\beta(F_n)=\beta(Q_n)\le n-1$ for odd $n\ge 5$. By Lemma \ref{evenfold},  $\beta(F_n)\le 2\beta(Q_{n-1})\le 2n-4$ for even $n\ge 6$. This is a shorter proof of Theorem \ref{zhang2020}.  Since $\beta(Q_9)=7$,  $\beta(Q_n)\le \beta(Q_9)+n-9=n-2$ for odd $n\ge 9$. It implies that Conjecture \ref{zhang2020conj} is false. Furthermore, we have the following asymptotic result of $\beta(F_n)$ by Lemmas \ref{ge}--\ref{evenfold} and $\beta(Q_n)=(2+o(1))n/\log_2 n$.
\begin{theorem}\label{rsfold}
If $n$ is odd, then $\beta(F_n)=(2+o(1))n/\log_2 n$. If $n$ is even, then $(2+o(1))n/\log_2 n\le \beta(F_n)\le (4+o(1))n/\log_2 n$.\qed
\end{theorem}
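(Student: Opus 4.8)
The plan is to read Theorem~\ref{rsfold} as a direct corollary of the three structural lemmas just established (Lemmas~\ref{ge}--\ref{evenfold}) together with the already-noted asymptotic $\beta(Q_n)=(2+o(1))n/\log_2 n$, which itself comes from chaining Theorem~\ref{sebh02004} (so that $\beta(Q_n)=M(n)\pm 1$) with Theorem~\ref{erdos-lindstrom-cantor}. Once I grant that asymptotic, the whole argument splits cleanly into the odd and even cases with essentially no new ideas, so the ``proof'' is really a matter of assembling the right lemma in each case and doing a small amount of asymptotic bookkeeping.

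First I would dispose of the odd case. For odd $n\ge 3$, Lemma~\ref{oddfold} gives the exact identity $\beta(F_n)=\beta(Q_n)$, and substituting $\beta(Q_n)=(2+o(1))n/\log_2 n$ immediately yields $\beta(F_n)=(2+o(1))n/\log_2 n$. Nothing further is needed here; the folded cube inherits the hypercube's asymptotics on the nose because merging antipodal vertices costs nothing in the metric dimension when $n$ is odd.

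For the even case I would establish the two bounds separately. The lower bound is immediate from Lemma~\ref{ge}, which guarantees $\beta(F_n)\ge\beta(Q_n)=(2+o(1))n/\log_2 n$ for all $n\ge 3$. For the upper bound I would apply Lemma~\ref{evenfold} at index $n-1$, namely $\beta(F_n)\le 2\beta(Q_{n-1})$, and then reduce $\beta(Q_{n-1})$ to the target scale. The one small wrinkle is verifying that shifting the index by one does not change the leading asymptotic, i.e.\ that
\begin{equation*}
\beta(Q_{n-1})=(2+o(1))\frac{n-1}{\log_2(n-1)}=(2+o(1))\frac{n}{\log_2 n},
\end{equation*}
which follows because $\tfrac{n-1}{\log_2(n-1)}=(1+o(1))\tfrac{n}{\log_2 n}$ as $n\to\infty$. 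Multiplying by the factor $2$ from Lemma~\ref{evenfold} then gives $\beta(F_n)\le(4+o(1))n/\log_2 n$, completing the even case.

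I do not expect a genuine obstacle: the result is structurally a packaging of Lemmas~\ref{ge}--\ref{evenfold}. The only place to be slightly careful is the even upper bound, where the factor $2$ in Lemma~\ref{evenfold} is what degrades the constant from $2$ to $4$, and where one must confirm that the index shift $n\mapsto n-1$ is absorbed into the $o(1)$ term rather than altering the constant. The honest limitation worth flagging is that this argument leaves a genuine gap for even $n$ between the constants $2$ and $4$, reflecting the fact that our upper-bound construction in Lemma~\ref{evenfold} doubles a resolving set of $Q_{n-1}$ and we have no matching lower bound beyond $\beta(Q_n)$; pinning down the true constant for even $n$ would require a sharper construction or a stronger lower bound and lies outside what these lemmas deliver.
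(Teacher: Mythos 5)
Your proposal is correct and follows exactly the paper's route: the theorem is stated there as an immediate consequence of Lemmas~\ref{ge}--\ref{evenfold} together with $\beta(Q_n)=(2+o(1))n/\log_2 n$, with the odd case from Lemma~\ref{oddfold}, the even lower bound from Lemma~\ref{ge}, and the even upper bound from Lemma~\ref{evenfold} applied at index $n-1$. Your only addition is making explicit the harmless index-shift estimate $\tfrac{n-1}{\log_2(n-1)}=(1+o(1))\tfrac{n}{\log_2 n}$, which the paper leaves implicit.
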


Now we consider the doubly distance resolving set of $F_n$.
\begin{lemma}\label{ddrsodd}
If $n=2k+1\ge 3$ is odd, then $\phi(F_n)\le (n+1)/2$.
\end{lemma}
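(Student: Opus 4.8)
The plan is to use vertex-transitivity to fix the base vertex, then exhibit an explicit family of $k+1$ landmarks and show that it lets one read off the distance to the origin.

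First, since $F_n$ is vertex-transitive, Lemma~\ref{sym}\ref{a} reduces the claim to constructing a doubly distance resolving set of $F_n$ on $\foldzero$ of size $k+1$. Writing $w(u)=\sum_i u_i$ for the Hamming weight and choosing for each folded vertex the representative $u$ with $w(u)\le k$ (which is unique because $n$ is odd), we have $d_{F_n}(\fold u,\foldzero)=\min\{w(u),n-w(u)\}=w(u)$. So the task becomes: find $k+1$ folded vertices $\fold{x_1},\dots,\fold{x_{k+1}}$ such that the tuple $\bigl(w(u)-d_{F_n}(\fold u,\fold{x_j})\bigr)_{j}$ determines $w(u)$ for every $u$.

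The construction I would use partitions the coordinate set $\{1,\dots,2k+1\}$ into the $k$ pairs $\{2j-1,2j\}$, $1\le j\le k$, together with the singleton $\{2k+1\}$, taking $x_j$ to be the indicator vector of the $j$-th block, so $x_1,\dots,x_k$ have weight $2$ and $x_{k+1}$ has weight $1$. The engine of the argument is the identity $d_{Q_n}(u,x)=w(u)+s-2\,(u\cdot x)$ for a weight-$s$ landmark $x$: setting $t=u\cdot x$ (the number of $1$'s of $u$ inside that block) and $D=w(u)+s-2t$, we get $d_{F_n}(\fold u,\fold x)=\min\{D,\,2k+1-D\}$, so the difference $w(u)-d_{F_n}(\fold u,\fold x)$ equals $2t-s$ when $D\le k$ and equals $2w(u)+s-2t-(2k+1)$ when $D\ge k+1$. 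I would then check, block by block, that this value pins down the overlap $t$: for a weight-$2$ block the difference is $2$ exactly when $t=2$, is $0$ exactly when $t=1$, and lies in $\{-2,-1,1\}$ exactly when $t=0$ (the three subcases $w(u)\le k-2$, $w(u)=k-1$, $w(u)=k$); for the weight-$1$ block it is $1$ iff $u_{2k+1}=1$ and lies in $\{-1,0\}$ iff $u_{2k+1}=0$. Since these value sets are pairwise disjoint within each block, the tuple recovers every block overlap $t_j$, and hence $w(u)=\sum_j t_j=d_{F_n}(\fold u,\foldzero)$.

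The main obstacle is precisely the wrap-around caused by the $\min$ in the folded distance: when $w(u)$ is close to $k$, the quantity $D=w(u)+s-2t$ can exceed $k$, so the naive value $2t-s$ flips to $2w(u)+s-2t-(2k+1)$, which now depends on $w(u)$ as well as on $t$. The delicate point to verify is that, even after this flip, the difference still determines $t$ unambiguously in every block; this is exactly the disjointness of the value sets $\{2\}$, $\{0\}$, $\{-2,-1,1\}$ recorded above. Establishing it is a short finite case analysis over $s\in\{1,2\}$, $t\in\{0,\dots,s\}$, and the weight ranges $w(u)\le k-2$, $w(u)=k-1$, $w(u)=k$; once that is in hand, the recovery of $w(u)$ as the sum of the block overlaps is immediate, completing the bound $\phi(F_n)\le k+1=(n+1)/2$.
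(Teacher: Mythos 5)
Your proposal is correct and uses exactly the paper's construction: the $k$ indicator vectors of the pairs $\{2j-1,2j\}$ plus the indicator of $\{2k+1\}$, with the same canonical representative $w(u)\le k$ and the same wrap-around identity $a=\max\{2t-s,\,2w(u)+s-2t-n\}$. The only difference is bookkeeping in the decoding step: the paper recovers $w(u)$ via a parity case analysis on the $a_i$ (all even vs.\ $a_{k+1}$ even vs.\ some $a_i$ odd), while you recover each block overlap $t_j$ from the disjoint value sets $\{2\}$, $\{0\}$, $\{-2,-1,1\}$ (resp.\ $\{1\}$, $\{-1,0\}$) and sum them --- both case analyses check out, so this is essentially the same proof.
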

\begin{proof}
Let $x^i=(x^i_1,\dots,x^i_n)\in V(Q_n)$ such that
\begin{equation*}
x^i_j= \begin{cases}
1 & j=2i-1 \text{ or }2i\\
0 & \text{otherwise}
\end{cases}
\text{ for }
1\le i \le k
\quad \quad\text{and}\quad \quad
x^{k+1}_j= \begin{cases}
1 & j=2k+1\\
0 & \text{otherwise}.
\end{cases}
\end{equation*}
%  and
Let $S = \{\fold{x^1},\dots,\fold{x^{k+1}}\}$. Then it suffices to prove that $S$ is a doubly distance resolving set of $F_n$ on $\foldzero$.

Let $\fold{u}$ be a vertex of $F_n$. Without loss of generality, we assume that $d_{Q_n}(u,\overrightarrow{0}) \le k$. Then $d_{F_n}(\fold{u},\foldzero)=\sum_{j=1}^n u_j$. Let $a_i = d_{F_n}(\fold{u},\foldzero)-d_{F_n}(\fold{u},\fold{x^i})$. Then
\begin{equation*}
    a_i = d_{Q_n}(u,\overrightarrow{0})-\min\{d_{Q_n}(u,x^i),d_{Q_n}(u,\overline{x^i})\} =\max\{2u\cdot x^i-x^i\cdot \overrightarrow{1},2u\cdot \overline{x^i}-\overline{x^i}\cdot \overrightarrow{1}\}.
\end{equation*}
Therefore
\begin{equation*}
a_i=\max\left\{2(u_{2i-1}+u_{2i})-2,2\left(\sum_{j=1}^n u_j - u_{2i}-u_{2i-1}\right)-(n-2)\right\} \text{ for }1\le i \le k
% d_{F_n}(\fold{u},\fold{x^i})=\min\left\{\sum_{j=1}^n u_j-2(u_{2i-1}+u_{2i})+2,n-2-\sum_{j=1}^n u_j+2(u_{2i-1}+u_{2i})\right\}
\end{equation*}
and
\begin{equation*}
a_{k+1}=\max\left\{2u_{2k+1}-1,2\left(\sum_{j=1}^n u_j -u_{2k+1}\right)-(n-1)\right\}.
\end{equation*}

Firstly, if $a_i$ is even for every $i\le k$ and $a_{k+1}$ is odd, then $a_i=2(u_{2i-1}+u_{2i})-2$ for every $i\le k$ and $a_{k+1}=2u_{2k+1}-1$. Thus,
\begin{equation*}
    \sum_{j=1}^n u_j=\sum_{i=1}^{k+1} u \cdot x^i=\left(\sum_{i=1}^{k+1} a_i+n\right)/2.
\end{equation*}

Secondly, if $a_{k+1}$ is even, then
\begin{equation*}
    2\left(\sum_{j=1}^n u_j -u_{2k+1}\right)-(n-1)>2u_{2k+1}-1 \Rightarrow 2\sum_{j=1}^n u_j>4u_{2k+1}+n-2\ge n-2=2k-1.
\end{equation*}
Since $d_{Q_n}(u,\overrightarrow{0})=\sum_{j=1}^n u_j\le k$, we have $\sum_{j=1}^n u_j=k$.

Finally, if $a_i$ is odd for some $i\le k$, then $a_i=2\left(\sum_{j=1}^n u_j - u_{2i}-u_{2i-1}\right)-(n-2)$ and
\begin{equation*}
% 4(u_{2i-1}+u_{2i})+(n-4)<2\sum_{j=1}^n u_j\Rightarrow u_{2i-1}+u_{2i}< \frac{2\sum_{j=1}^n u_j-(n-4)}{4}\le \frac{3}{4}\Rightarrow u_{2i-1}=u_{2i}=0.
2(u_{2i-1}+u_{2i})-2<2\left(\sum_{j=1}^n u_j - u_{2i}-u_{2i-1}\right)-(n-2)\Rightarrow u_{2i-1}+u_{2i}< \frac{2\sum_{j=1}^n u_j-(n-4)}{4}\le \frac{3}{4}.
\end{equation*}
It leads that $u_{2i-1}+u_{2i}=0$. We have $\sum_{j=1}^n u_j=(a_i+n-2)/2+(u_{2i-1}+u_{2i})=(a_i+n-2)/2$.

From the above discussion, $d_{F_n}(\fold{u},\foldzero)$ is uniquely determined by the vector $(a_1,\dots,a_{k+1})$. Therefore, $S$ is a doubly distance resolving set of $F_n$ on $\foldzero$.
\end{proof}

\begin{lemma}\label{ddrseven}
If $n=2k$ is even, then $\phi(F_n)\le n-1$.
\end{lemma}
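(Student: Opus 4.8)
The plan is to exhibit an explicit doubly distance resolving set of $F_n$ on $\foldzero$ of size $n-1$; since $F_n$ is vertex-transitive, Lemma~\ref{sym}\ref{a} then gives $\phi(F_n)=\phi(F_n,\foldzero)\le n-1$. Mirroring the construction of Lemma~\ref{ddrsodd}, I would probe the first $n-2$ coordinates one at a time and bundle the last two coordinates into a single paired probe. Writing $e_i$ for the $0/1$ vector with a unique $1$ in coordinate $i$, set $x^i=e_i$ for $1\le i\le n-2$ and $x^{n-1}=e_{n-1}+e_n$, and let $S=\{\fold{x^1},\dots,\fold{x^{n-1}}\}$, so that $|S|=n-1$. (For $n=2$ the statement is trivial, so I assume $n\ge 4$, i.e. $k\ge 2$.) It remains to show that $S$ is a doubly distance resolving set of $F_n$ on $\foldzero$.

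Following the opening of Lemma~\ref{ddrsodd}, fix a vertex $\fold u$ and, using $\fold u=[\overline u]$, choose the representative with $w:=\sum_{j=1}^n u_j\le k$, so that $d_{F_n}(\fold u,\foldzero)=w$. The same manipulation used there, with $s_i=x^i\cdot\overrightarrow{1}$ and $p_i=u\cdot x^i$, yields
\[
a_i:=d_{F_n}(\fold u,\foldzero)-d_{F_n}(\fold u,\fold{x^i})=\max\{\,2p_i-s_i,\ (2w-n)-(2p_i-s_i)\,\}.
\]
Since $w\le k=n/2$ we have $2w-n\le 0$, and the task is to recover $w$ from $(a_1,\dots,a_{n-1})$.

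For a single-coordinate probe ($i\le n-2$, so $s_i=1$ and $p_i=u_i$) the inequality $w\le k$ makes the first term dominate whenever $w<k$, giving $a_i=2u_i-1\in\{-1,1\}$; hence the single probes read off $u_1,\dots,u_{n-2}$ and therefore $r:=\sum_{j=1}^{n-2}u_j$. At the boundary $w=k$ the two representatives tie and every single probe saturates to $a_i=1$; since $w<k$ forces $r\le k-1<n-2$ and thus produces at least one $a_i=-1$, the event ``all single probes equal $1$'' exactly detects $w=k$, where $d_{F_n}(\fold u,\foldzero)=k$ is determined. In the remaining regime $w<k$ I read off $r$ and then use the paired value ($s_{n-1}=2$, $p_{n-1}=u_{n-1}+u_n=:m$), which after substituting $w=r+m$ becomes $a_{n-1}=\max\{2m-2,\ 2r-2k+2\}$; a short check over $m\in\{0,1,2\}$ shows that for each known $r$ the admissible pairs $(m,a_{n-1})$ are distinct, so $m$, and hence $w=r+m$, is determined.

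The main obstacle is exactly what forces the bound $n-1$ here instead of the $(n+1)/2$ of the odd case. The parity argument of Lemma~\ref{ddrsodd} relies on $s_i$ and $n-s_i$ having opposite parities, which fails for even $n$, so one cannot decide from parity which branch of the $\max$ is active. My substitute is to invoke $w\le k$ to certify that the first branch dominates, but this breaks precisely at the tie $w=k$ and costs information, since a single probe no longer pins down its own coordinate there. The delicate points are therefore isolating the tie $w=k$ cleanly and verifying that the lone paired probe $a_{n-1}$, together with the already-known partial weight $r$, resolves $m=u_{n-1}+u_n$ without collision --- in particular the case $r=k-1$, where $m=0$ must be separated from the $w=k$ branch.
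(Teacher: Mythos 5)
Your proof is correct, but it takes a genuinely different route from the paper's. The paper probes with the nested prefix vectors $x^i$ having ones in the first $i$ coordinates, sets $b_i=2\sum_{j\le i}u_j-i$ and $c_i=2\sum_{j>i}u_j-(n-i)$, and runs a discrete intermediate-value argument: $d_i=b_i-c_i$ is even, changes by at most $2$ per step, and satisfies $d_0\le 0\le d_n$, so the two branches of the max tie at some index; at that index $a_i$ is the global minimum of all the $a_i$ (with $a_0=a_n=0$ known for free), and the weight is read off uniformly as $\min_i a_i + n/2$ with no case distinctions. You instead use $n-2$ singleton probes plus one doubled probe, and your two delicate points both check out: the saturation test is sound, since $w<k$ forces $r\le w\le k-1<n-2$ and hence some singleton probe reads $a_i=-1$, while $w=k$ makes every singleton probe read $\max\{2u_i-1,\,1-2u_i\}=1$ regardless of $u_i$ (consistently with the fact that at $w=k$ the two representatives of $\fold{u}$ tie and the coordinates $u_i$ are no longer well defined from $\fold{u}$); and the paired-probe table is collision-free, since under the regime constraint $r+m\le k-1$ the values $a_{n-1}=\max\{2m-2,\,2r-2k+2\}$ come out as $-2,0,2$ for $m=0,1,2$ when $r\le k-3$, only $m\in\{0,1\}$ (values $-2,0$) survive at $r=k-2$ --- the pair $(r,m)=(k-2,2)$ being diverted to the $w=k$ branch by the saturation test --- and only $m=0$ survives at $r=k-1$. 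Your diagnosis of why the odd case's parity trick fails is also the right structural observation: the two branches of the max have parities $s_i$ and $n-s_i$ modulo $2$, which coincide for every probe when $n$ is even. Comparing the two arguments: the paper's is more uniform (one closed formula, no boundary cases, and the staircase works for any weight $\le k$ without distinguishing ties), whereas yours extracts strictly more information in the generic regime (the individual coordinates $u_1,\dots,u_{n-2}$, not just the weight) at the price of isolating the boundary event $w=k$ by hand; both constructions use exactly $n-1$ probes, matching the stated bound.
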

\begin{proof}
Let $x^i=(x^i_1,\dots,x^i_n)\in V(Q_n)$ such that for $0\le i \le n$,
\begin{equation*}
x^i_j= \begin{cases}
1 & j\le i\\
0 & \text{otherwise}
\end{cases}
\end{equation*}
Let $S = \{\fold{x^1},\dots,\fold{x^{n-1}}\}$. It suffices to prove that $S$ is a doubly distance resolving set of $F_n$ on $\foldzero$.

Let $\fold{u}$ be a vertex of $F_n$. Without loss of generality, we assume that $d_{Q_n}(u,\overrightarrow{0}) \le k$. Then $d_{F_n}(\fold{u},\foldzero)=\sum_{j=1}^n u_j$. Let $a_i = d_{F_n}(\fold{u},\foldzero)-d_{F_n}(\fold{u},\fold{x^i})$,
%  $b_i=d_{Q_n}(u,\overrightarrow{0})-d_{Q_n}(u,x^i)$ and $c_i=d_{Q_n}(u,\overrightarrow{0})-d_{Q_n}(u,\overline{x^i})$.
\begin{equation*}
    b_i=d_{Q_n}(u,\overrightarrow{0})-d_{Q_n}(u,x^i)=2u\cdot x^i-x^i\cdot \overrightarrow{1}=2\sum_{j=1}^i u_j-i
\end{equation*}
and
\begin{equation*}
    c_i=d_{Q_n}(u,\overrightarrow{0})-d_{Q_n}(u,\overline{x^i})=2u\cdot \overline{x^i}-\overline{x^i}\cdot \overrightarrow{1}=2\sum_{j=i+1}^n u_j-(n-i).
\end{equation*}
Then
\begin{equation*}
a_i=\max\{b_i,c_i\}=\max\left\{2\sum_{j=1}^i u_j-i,2\sum_{j=i+1}^n u_j-(n-i)\right\}.
\end{equation*}

Let $d_i = b_i-c_i$. First, since $b_0=c_n=0$, we have $d_0\le 0$ and $d_n\ge 0$. Second, $|d_{i+1}-d_i| \le |b_{i+1}-b_i|+|c_{i+1}-c_i|=|2u_{i+1}-1|+|1-2u_{i+1}|=2$. Third, since $b_i+c_i=2\sum_{j=1}^n u_j-n$ is even, $d_i$ is even. Combining them with the principle of bisection method, there is a $k$ such that $d_k=0$, i.e. $a_k=b_k=c_k$. Besides, for each $i$ such that $b_i\neq c_i$, we have $a_i = \max\{b_i,c_i\}>(b_i+c_i)/2=\sum_{j=1}^n u_j-n/2=(b_k+c_k)/2=a_k$, i.e. $a_k=\min\{a_i:i\in \{0,1,\dots,n\}\}$. Note that $a_0=a_n=0$. Therefore,
\begin{equation*}
    \sum_{j=1}^n u_j=\sum_{j=1}^k u_j+\sum_{j=k+1}^n u_j=\frac{a_k+k}{2}+\frac{a_k+(n-k)}{2}=a_k+\frac{n}{2}.
\end{equation*}

From the above, $d_{F_n}(\fold{u},\foldzero)$ is uniquely determined by the vector $(a_1,\dots,a_{k+1})$. Therefore, $S$ is a doubly distance resolving set of $F_n$ on $\foldzero$.
\end{proof}
By Theorems \ref{psile} and \ref{rsfold}, we have the following theorem.
\begin{theorem}
If $n\ge 3$ is odd, then $\Psi(F_n)\le \beta(F_n)+(n+1)/2=n/2+o(n)$. If $n$ is even, then  $\Psi(F_n)\le \beta(F_n)+n-1=n+o(n)$.  \qed
\end{theorem}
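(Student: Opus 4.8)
The plan is to derive both bounds directly from the general inequality $\Psi(G)\le \beta(G)+\phi(G)$ of Theorem \ref{psile}, feeding in the specific doubly distance resolving set bounds for $F_n$ established in Lemmas \ref{ddrsodd} and \ref{ddrseven}, and then cleaning up the asymptotics with Theorem \ref{rsfold}.

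First I would treat the odd case. By Theorem \ref{psile}, $\Psi(F_n)\le \beta(F_n)+\phi(F_n)$. Lemma \ref{ddrsodd} gives $\phi(F_n)\le (n+1)/2$ for odd $n\ge 3$, so $\Psi(F_n)\le \beta(F_n)+(n+1)/2$, which is the first claimed inequality. To obtain the asymptotic form, I would invoke Theorem \ref{rsfold}: for odd $n$, $\beta(F_n)=(2+o(1))n/\log_2 n$, which is $o(n)$. Hence $\beta(F_n)+(n+1)/2=n/2+o(n)$, as required.

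The even case is entirely parallel. Again $\Psi(F_n)\le \beta(F_n)+\phi(F_n)$ by Theorem \ref{psile}, and now Lemma \ref{ddrseven} supplies $\phi(F_n)\le n-1$, giving $\Psi(F_n)\le \beta(F_n)+n-1$. Theorem \ref{rsfold} bounds $\beta(F_n)\le (4+o(1))n/\log_2 n=o(n)$ in the even case, so $\beta(F_n)+n-1=n+o(n)$, completing the proof.

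Since every ingredient is already in hand, there is no real obstacle beyond bookkeeping; the only point requiring a moment's care is the asymptotic accounting, namely observing that the metric dimension term $\beta(F_n)$ grows only like $n/\log_2 n$ and is therefore absorbed into the $o(n)$ error, so that the linear term $(n+1)/2$ (respectively $n-1$) dominates. I would state this observation explicitly rather than leave it implicit.
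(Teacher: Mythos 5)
Your proposal is correct and matches the paper's own proof, which likewise obtains both bounds by combining Theorem \ref{psile} with the bounds $\phi(F_n)\le (n+1)/2$ (odd $n$, Lemma \ref{ddrsodd}) and $\phi(F_n)\le n-1$ (even $n$, Lemma \ref{ddrseven}), then absorbing $\beta(F_n)=O(n/\log_2 n)=o(n)$ via Theorem \ref{rsfold}. Your explicit note that the $\beta(F_n)$ term is swallowed by the $o(n)$ error is exactly the bookkeeping the paper leaves implicit.
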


\section{NP-completeness}\label{npc}
A split graph is a graph whose vertex set is the disjoint union of a clique $C$ and an independent set $I$. In other words, every two vertices in $C$ are connected by an edge, while no two vertices in $I$ are connected by an edge. There is no restriction on edges having one end in $C$ and one end in $I$.

Similarly, a bipartite graph is a graph whose vertex set is the disjoint union of two independent sets $S_1$ and $S_2$. A co-bipartite graph is a graph whose vertex set is the disjoint union of two cliques $C_1$ and $C_2$. 

In this section, we prove that the minimal doubly resolving set problem is NP-hard even for split graphs, bipartite graphs and co-bipartite graphs. The proof is an extension of the proof for metric dimension problem in \citep{epstein2015weighted}. Besides, we omit some details that were mentioned in \citep{epstein2015weighted}.

The $3$-dimensional matching problem is defined as follows. Given three disjoint sets $A,B,C$ such that $|A|=|B|=|C|=n$, and a set of triples $S \subseteq A \times B \times C$, is there a subset $S'\subseteq S$ such that each element of $A \cup B \cup C$ occurs in exactly one of the triples of $S'$. It is well-known that 3-dimensional matching problem is NP-hard, due to \citet{karp1972reducibility}. 

For each subset $S'\subseteq S$, the cost of $S'$ is calculated by $c(S')=|S'|+3n-|\bigcup_{(a,b,c)\in S'}\{a,b,c\}|$. Note that if $S'$ is a 3-dimensional matching, then $c(S')=n$. Let $N=2^{12}n$ and $n'=nN$. Let $\mathcal{A}=\bigcup_{i=1}^N A_i$, $\mathcal{B}=\bigcup_{i=1}^N B_i$ and $\mathcal{C}=\bigcup_{i=1}^N C_i$, where $A_i,B_i,C_i$ are the copies of $A,B,C$ respectively. Let $\mathcal{S}=\bigcup_{i=1}^N S_i$, where $S_i$ is the copy of $S$ corresponding to $A_i,B_i,C_i$. It is clear that $\mathcal{S}'=\bigcup_{i=1}^N S'_i\subseteq \mathcal{S}$ is a 3-dimensional matching if $S'\subseteq S$ is a 3-dimensional matching and $S_i'$ is the copy of $S'$ corresponding to $A_i,B_i,C_i$. Furthermore, \citet{epstein2015weighted} proved the following lemma.
\begin{lemma}[\citet{epstein2015weighted}]\label{gap3dm}
There is a 3-dimensional matching $S'\subseteq S$ if and only if there is a subset $\mathcal{S}'\subseteq \mathcal{S}$ such that $c(\mathcal{S}')\le n'+\sqrt{n'}-1$.
\end{lemma}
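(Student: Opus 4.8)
The plan is to reduce the whole statement to a tight analysis of the cost of a \emph{single} copy, and then exploit the additivity of $c$ across the $N$ disjoint copies together with the large amplification factor $N=2^{12}n$. The key observation is that for any $S'\subseteq S$, writing $k=|S'|$ and $m=|\bigcup_{(a,b,c)\in S'}\{a,b,c\}|$ for the number of covered ground elements, the cost is simply $c(S')=k+(3n-m)$: the number of chosen triples plus the number of \emph{uncovered} elements.

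First I would establish the single-copy dichotomy: $\min_{S'}c(S')=n$ when $S$ admits a $3$-dimensional matching, and $\min_{S'}c(S')\ge n+1$ otherwise. Since each triple covers at most three elements, $m\le\min\{3k,3n\}$, so $c(S')\ge k+\max\{3n-3k,0\}$. For $k\le n$ this gives $c(S')\ge 3n-2k\ge n$, and equality forces $k=n$ together with $m=3n$; for $k>n$ it gives $c(S')\ge k>n$. Thus $c(S')=n$ requires exactly $n$ triples covering all $3n$ elements, which for $n$ triples means they are pairwise disjoint, i.e.\ a $3$-dimensional matching; conversely a matching attains $c=n$. When no matching exists the same bounds yield $\min c(S')\ge n+1$ (indeed $c\ge n+2$ for $k<n$, while for $k=n$ one must have $m\le 3n-1$ so $c\ge n+1$, and $c\ge k\ge n+1$ for $k>n$).

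Next I would use that the copies $A_i\cup B_i\cup C_i$ are pairwise disjoint, so the cost is additive: writing $\mathcal{S}'_i=\mathcal{S}'\cap S_i$, one has $c(\mathcal{S}')=\sum_{i=1}^N c_i(\mathcal{S}'_i)$ with $c_i$ the single-copy cost in copy $i$. Hence $\min_{\mathcal{S}'}c(\mathcal{S}')=\sum_i\min c_i$, and the two branches of the dichotomy amplify. If $S$ has a matching, choosing one in each copy gives $c(\mathcal{S}')=nN=n'$, and since $n'\ge 1$ we get $n'\le n'+\sqrt{n'}-1$, so the threshold is met. If $S$ has no matching, every $\mathcal{S}'$ satisfies $c(\mathcal{S}')\ge N(n+1)=n'+N$. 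The role of $N=2^{12}n$ is precisely to separate these cases: here $n'=nN=2^{12}n^2$, so $\sqrt{n'}=2^{6}n=64n$, whence $n'+\sqrt{n'}-1=n'+64n-1<n'+2^{12}n=n'+N$. Thus in the no-matching case no subset can reach $n'+\sqrt{n'}-1$, which together with the matching case gives the claimed equivalence.

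The only genuinely delicate point is the single-copy equality analysis, namely pinning down that $c(S')=n$ is attainable exactly when a $3$-dimensional matching exists, and in particular that with $k=n$ triples full coverage $m=3n$ is equivalent to disjointness. Everything else is routine: additivity over disjoint ground sets, and the elementary inequality $64n-1<2^{12}n$. The deliberate choice $N=2^{12}n$ is tuned so that the $\Theta(\sqrt{n'})=\Theta(n)$ slack in the threshold is dominated by the per-copy gap of $1$ summed over $N=\Theta(n)$ copies.
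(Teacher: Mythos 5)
Your proof is correct, and since the paper only cites Lemma~\ref{gap3dm} from \citet{epstein2015weighted} without reproducing a proof, the comparison is with the cited source, whose argument is essentially the one you give: the single-copy dichotomy ($\min c = n$ exactly when a perfect matching exists, $\ge n+1$ otherwise), additivity of $c$ over the $N$ disjoint copies, and the amplification $N(n+1) = n' + N > n' + \sqrt{n'} - 1$ since $\sqrt{n'} = 64n < N = 2^{12}n$. Your equality analysis at $k=n$ (full coverage $m=3n$ iff the $n$ triples are pairwise disjoint, each contributing three distinct elements because $A,B,C$ are disjoint) is the one genuinely delicate step, and you handle it correctly.
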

Let $\mathcal{S}=\{s_0,s_1,\dots,s_{\tau-1}\}$, $v = \lceil \log_2 \tau \rceil$ and $K=n'+v+5$. Note that $K<n'+\sqrt{n'}-4$. They construct a graph $G$ whose vertices are partitioned into two sets $I=\{s_{\mathcal{A}}, s_{\mathcal{B}}, s_{\mathcal{C}}, s_{\mathcal{D}}\} \cup \mathcal{S}$ and $J=\mathcal{A}\cup \mathcal{B}\cup \mathcal{C}\cup \{d_0,d_1,\dots,d_{v-1}\}$. If $u \in J$ and $v \in I$, then $\{u, v\} \in E$ in the seven following cases (see Figure \ref{tunpc}):
\begin{enumerate}
    \item $u \in \mathcal{A}$ and $v = s_{\mathcal{A}}$.
    \item $u \in \mathcal{B}$ and $v = s_{\mathcal{B}}$.
    \item $u \in \mathcal{C}$ and $v = s_{\mathcal{C}}$.
    \item $u \in \mathcal{A}\cup \mathcal{B}\cup \mathcal{C}$ and $v = s_{\mathcal{D}}$.
    \item $u \in \{a,b,c\}$ and $v = (a,b,c) \in \mathcal{S}$
    \item $u = d_i$ and $v = s_j$ such that $\lfloor j/2^i \rfloor \bmod 2 = 1$
    \item $u = d_i$ and $v = s_{\mathcal{D}}$,
\end{enumerate}

The set of additional edges of $G$ is defined according to the following cases. For the case of bipartite graphs there are no additional edges. For the case of split graphs, $J$ is a clique and $I$ is an independent set, and for the case of co-bipartite graphs both $I$ and $J$ are cliques. Clearly, the construction of the graph $G$ in all cases can be done in polynomial time. Then they prove the following lemma.

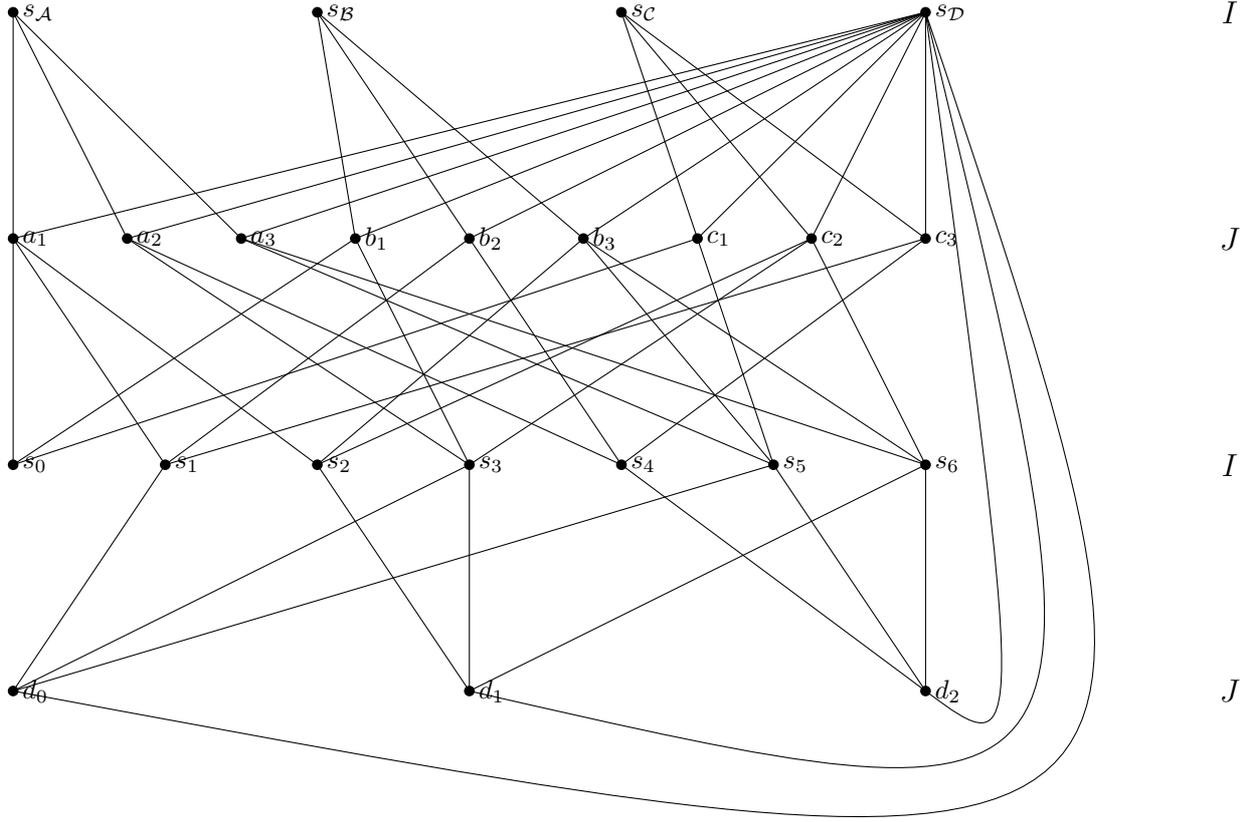
\begin{figure}[htb]
\centering
\begin{tikzpicture}

\coordinate[label=right:{$s_{\mathcal{A}}$}] (sa) at (0,0);
\coordinate[label=right:{$s_{\mathcal{B}}$}] (sb) at (4,0);
\coordinate[label=right:{$s_{\mathcal{C}}$}] (sc) at (8,0);
\coordinate[label=right:{$s_{\mathcal{D}}$}] (sd) at (12,0);
\fill (sa) circle[radius=2pt];
\fill (sb) circle[radius=2pt];
\fill (sc) circle[radius=2pt];
\fill (sd) circle[radius=2pt];
\foreach \i in {1,2,3}
{
\coordinate[label=right:{$a_\i$}] (a\i) at (1.5*\i-1.5,-3);
\fill (a\i) circle[radius=2pt];

\coordinate[label=right:{$b_\i$}] (b\i) at (1.5*\i+3,-3);
\fill (b\i) circle[radius=2pt];

\coordinate[label=right:{$c_\i$}] (c\i) at (1.5*\i+7.5,-3);
\fill (c\i) circle[radius=2pt];

\draw (sa) -- (a\i);
\draw (sb) -- (b\i);
\draw (sc) -- (c\i);
\draw (sd) -- (a\i);
\draw (sd) -- (b\i);
\draw (sd) -- (c\i);
}
\foreach \i in {0,...,6}
{
\coordinate[label=right:{$s_\i$}] (s\i) at (2*\i,-6);
\fill (s\i) circle[radius=2pt];
}
\foreach \i in {0,...,2}
{
\coordinate[label=right:{$d_\i$}] (d\i) at (6*\i,-9);
\fill (d\i) circle[radius=2pt];
}

\draw (sd) .. controls (40/3,-10) .. (d2);
\draw (sd) .. controls (44/3,-11) .. (d1);
\draw (sd) .. controls (16,-12) .. (d0);

\node at (16,0) {\large $I$};
\node at (16,-3) {\large $J$};
\node at (16,-6) {\large $I$};
\node at (16,-9) {\large $J$};

\foreach \i in {0,1,2}
{
\draw (a1) -- (s\i);
}
\foreach \i in {3,4}
{
\draw (a2) -- (s\i);
}
\foreach \i in {5,6}
{
\draw (a3) -- (s\i);
}
\foreach \i in {0,3}
{
\draw (b1) -- (s\i);
}
\foreach \i in {1,4}
{
\draw (b2) -- (s\i);
}
\foreach \i in {2,5,6}
{
\draw (b3) -- (s\i);
}
\foreach \i in {0,5}
{
\draw (c1) -- (s\i);
}
\foreach \i in {2,3,6}
{
\draw (c2) -- (s\i);
}
\foreach \i in {1,4}
{
\draw (c3) -- (s\i);
}

\foreach \i in {1,3,5}
{
\draw (d0) -- (s\i);
}
\foreach \i in {2,3,6}
{
\draw (d1) -- (s\i);
}
\foreach \i in {4,5,6}
{
\draw (d2) -- (s\i);
}

\end{tikzpicture}
\caption{An example for constructing for the case of bipartite graphs (the instance actually is much larger), where $\mathcal{S}=\{(a_1, b_1, c_1)$, $(a_1, b_2, c_3)$, $(a_1, b_3, c_2)$, $(a_2, b_1, c_2)$, $(a_2, b_2, c_3)$, $(a_3, b_3, c_1)$, $(a_3, b_3, c_2)\}$}\label{tunpc}
\end{figure}

\begin{lemma}[\citet{epstein2015weighted}]\label{rsnpc}
\begin{enumerate}[(a)]
\item If $G$ has a resolving set $L$ such that $|L|\le K$, then there is a subset $\mathcal{S}'\subseteq \mathcal{S}$ such that $c(\mathcal{S}')\le K+3<n'+\sqrt{n'}-1$. \label{nprsa}
\item If there is a 3-dimensional matching $\mathcal{S}'\subseteq \mathcal{S}$, then $L=\mathcal{S}'\cup \{s_{\mathcal{A}}, s_{\mathcal{B}}, s_{\mathcal{C}}, s_{\mathcal{D}}\}\cup \{d_0,d_1,\dots,d_{v-1}\}$ is a resolving set of $G$. Note that $|L|=K-1$. \label{nprsb}
\end{enumerate}
\end{lemma}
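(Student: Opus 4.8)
The plan is to prove the two implications \ref{nprsa} and \ref{nprsb} separately, resting both on a single structural observation about how a landmark can tell apart two elements lying in the \emph{same} one of the groups $\mathcal{A},\mathcal{B},\mathcal{C}$. First I would compute, in each of the three edge variants (bipartite, split, co-bipartite), the distance profile of an element $a\in\mathcal{A}$ to every possible landmark. The point to establish is that $d_G(a,s_{\mathcal{A}}),d_G(a,s_{\mathcal{B}}),d_G(a,s_{\mathcal{C}}),d_G(a,s_{\mathcal{D}})$ and each $d_G(a,d_i)$ depend only on the fact that $a\in\mathcal{A}$, not on which element of $\mathcal{A}$ it is, and likewise that an element used as a landmark gives a common distance to all members of a fixed group. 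By contrast, for a triple $t=(a',b',c')\in\mathcal{S}$ the distance $d_G(a,t)$ takes its smaller value exactly when $a\in\{a',b',c'\}$. Hence \emph{within a fixed group, two distinct elements are separated only by a triple-landmark containing exactly one of them}. I would isolate this as a preliminary claim valid for all three graph classes; it is the engine of the whole argument.

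For the backward direction \ref{nprsb} I would take the given matching $\mathcal{S}'$, form $L$ as stated, and check that every pair of \emph{non-landmark} vertices is resolved (each landmark resolves itself from everything by distance $0$). Two elements of the same group are each covered by a unique matching triple, which therefore separates them; two elements of different groups are separated by the corresponding selector $s_{\mathcal{A}},s_{\mathcal{B}}$ or $s_{\mathcal{C}}$; two distinct triples $s_j,s_{j'}$ differ in some bit $i$ of their indices, so $d_i$ is adjacent to exactly one of them and separates them; and an element versus a triple is handled by a selector (the two lie on opposite sides of $I\cup J$, so their selector profiles differ). Finally I would count $|L|=|\mathcal{S}'|+4+v=n'+v+4=K-1$.

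For the forward direction \ref{nprsa} I would set $\mathcal{S}'=L\cap\mathcal{S}$ and let $U$ be the set of elements of $\mathcal{A}\cup\mathcal{B}\cup\mathcal{C}$ not covered by $\mathcal{S}'$, so that $c(\mathcal{S}')=|\mathcal{S}'|+|U|$. The preliminary claim forces that, within each group, at most one uncovered element can fail to lie in $L$ itself: two such elements would be covered by no landmark-triple and hence separated by no landmark at all, contradicting that $L$ resolves $G$. Therefore $|U|\le |L\cap(\mathcal{A}\cup\mathcal{B}\cup\mathcal{C})|+3$, and since $\mathcal{S}'$ and $L\cap(\mathcal{A}\cup\mathcal{B}\cup\mathcal{C})$ are disjoint subsets of $L$, we get $c(\mathcal{S}')=|\mathcal{S}'|+|U|\le |L|+3\le K+3$. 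The strict inequality $K+3<n'+\sqrt{n'}-1$ is the numerical fact already recorded after the definition of $K$ (namely $K<n'+\sqrt{n'}-4$), using $n'=2^{12}n^2$ and $v=\lceil\log_2\tau\rceil=O(\log n)$; combined with Lemma \ref{gap3dm} this yields the reduction.

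The hard part will be the distance bookkeeping in the co-bipartite case, where making both $I$ and $J$ cliques collapses many distances to $1$ or $2$ and can erase separations that are obvious in the bipartite case. For instance, every triple is then at distance $1$ from all four selectors, so the selectors alone separate no two triples and one is forced to rely entirely on the encoders $d_i$; this is exactly why all $v$ encoders must be landmarks. Similarly, within a group every pair of elements has identical distance $1$ to all encoders and to all same-group element-landmarks, so the preliminary claim survives but for a different reason than in the bipartite case. Verifying that the structural claim and every case of \ref{nprsb} remain valid under this collapse, uniformly across the three variants, is where the real care is needed; the remaining counting in \ref{nprsa} is routine.
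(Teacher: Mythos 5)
Your reconstruction is correct, and since the paper states Lemma \ref{rsnpc} without proof (it is imported from \citet{epstein2015weighted}), your argument is in effect the missing proof, and it follows the same route as that source: the structural claim that two same-group elements are separated only by a triple-landmark containing one of them or by being landmarks themselves (which checks out in all three variants, e.g. $d(a,t)\in\{1,3\}$, $\{1,2\}$, $\{1,2\}$ in the bipartite, split, co-bipartite cases respectively, while all selector, encoder and element distances are constant on each group thanks to the edges $d_is_{\mathcal{D}}$), the per-group ``at most one uncovered non-landmark element'' count giving $c(\mathcal{S}')=|\mathcal{S}'|+|U|\le |L|+3\le K+3<n'+\sqrt{n'}-1$, and the direct verification plus the count $|L|=n'+v+4=K-1$ for part \ref{nprsb}. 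The co-bipartite worry you flag resolves exactly as you anticipate: there triples are at distance $1$ from all selectors, so triple/triple pairs must be separated by the encoders $d_i$ (distance $1$ versus $2$), and element/triple pairs by a selector of a different group, e.g. $d(a,s_{\mathcal{B}})=2\neq 1=d(s_j,s_{\mathcal{B}})$ for $a\in\mathcal{A}$, so no case breaks.
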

Now let us consider the minimal doubly resolving set problem.
\begin{lemma}\label{drsnpc}
\begin{enumerate}[(a)]
\item If $G$ has a doubly resolving set $L$ such that $|L|\le K$, then there is a subset $\mathcal{S}'\subseteq \mathcal{S}$ such that $c(\mathcal{S}')\le K+3<n'+\sqrt{n'}-1$. \label{npdrsa}
\item If there is a 3-dimensional matching $\mathcal{S}'\subseteq \mathcal{S}$, then $L=\mathcal{S}'\cup \{s_{\mathcal{A}}, s_{\mathcal{B}}, s_{\mathcal{C}}, s_{\mathcal{D}}\}\cup \{d_0,d_1,\dots,d_{v-1}\}$ is a doubly resolving set of $G$. Note that $|L|=K-1$. \label{npdrsb}
\end{enumerate}
\end{lemma}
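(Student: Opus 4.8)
The plan is to derive both parts from the already-established resolving-set version (Lemma~\ref{rsnpc}) together with the doubly-distance-resolving machinery of Section~\ref{ddrs}, so that essentially no new work on the reduction itself is required; the only genuinely new verification is a short, structure-specific distance computation in part~\ref{npdrsb}.

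For part~\ref{npdrsa} I would first record that every doubly resolving set is a resolving set: if $\{x,y\}$ doubly resolves $\{u,v\}$, then $d_G(u,x)-d_G(u,y)\neq d_G(v,x)-d_G(v,y)$ forces $d_G(u,x)\neq d_G(v,x)$ or $d_G(u,y)\neq d_G(v,y)$, so $x$ or $y$ resolves $\{u,v\}$ (this is exactly the observation behind $\beta(G)\le\Psi(G)$). Hence a doubly resolving set $L$ with $|L|\le K$ is in particular a resolving set with $|L|\le K$, and Lemma~\ref{rsnpc}\ref{nprsa} immediately yields the desired $\mathcal{S}'$ with $c(\mathcal{S}')\le K+3<n'+\sqrt{n'}-1$. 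This part carries over verbatim from the metric-dimension argument.

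The substance is part~\ref{npdrsb}. By Lemma~\ref{rsnpc}\ref{nprsb} the set $L$ is already a resolving set of $G$, so by Lemma~\ref{lpsile} it suffices to exhibit a vertex $x\in L$ and a subset $T\subseteq L$ that is a doubly distance resolving set of $G$ on $x$; then, taking $S=L$, the set $S\cup T=L$ is doubly resolving. I would choose $x=s_{\mathcal{D}}\in L$. The key structural fact is that $s_{\mathcal{D}}$ is adjacent to all of $J=\mathcal{A}\cup\mathcal{B}\cup\mathcal{C}\cup\{d_0,\dots,d_{v-1}\}$ (edge cases~4 and~7) and that every vertex of $I$ has a neighbour in $J$; consequently, in all three graph classes the distances from $s_{\mathcal{D}}$ take only the values $0$ (at $s_{\mathcal{D}}$ itself), $1$ (on $J$), and $2$ (on $I\setminus\{s_{\mathcal{D}}\}$, this last class collapsing to distance $1$ in the co-bipartite case, where $I$ is a clique). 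So being a doubly distance resolving set on $s_{\mathcal{D}}$ reduces to recovering $d_G(u,s_{\mathcal{D}})\in\{0,1,2\}$ from the difference vector, i.e.\ to separating these three distance classes.

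For the landmarks I would take $T=\{s_{\mathcal{A}},s_{\mathcal{B}},s_{\mathcal{C}}\}\cup\{d_0,\dots,d_{v-1}\}\subseteq L$ and check the finitely many cases. A short computation shows that $\{s_{\mathcal{A}},s_{\mathcal{B}},s_{\mathcal{C}}\}$ already separates the distance-$1$ class $J$ from the distance-$2$ class $I\setminus\{s_{\mathcal{D}}\}$, and also separates $s_{\mathcal{D}}$ from the distance-$2$ vertices. The main obstacle—and the only place where the bipartite case genuinely differs from the metric-dimension proof—is that in the bipartite graph the three landmarks $s_{\mathcal{A}},s_{\mathcal{B}},s_{\mathcal{C}}$ fail to distinguish $s_{\mathcal{D}}$ from the vertices $d_i$: the distance from each $d_i$ to $s_{\mathcal{D}}$ and to each of $s_{\mathcal{A}},s_{\mathcal{B}},s_{\mathcal{C}}$ exceeds the corresponding distance from $s_{\mathcal{D}}$ by exactly one, so $s_{\mathcal{D}}$ and every $d_i$ produce the same difference vector $(-2,-2,-2)$ on these three coordinates. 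This is why the reduction-specific coordinates $d_0,\dots,d_{v-1}$ must be included in $T$: since $d_G(d_i,s_{\mathcal{D}})=1$ while $d_G(d_i,d_i)=0$, the $d_i$-coordinates pin $s_{\mathcal{D}}$ (whose vector is constant over these landmarks) against each $d_i$ (whose vector has a single distinguished entry). In the split and co-bipartite cases the extra clique edges shrink distances and make this last separation automatic, so the verification there is only easier. Confirming that the resulting difference vectors never coincide across the three distance classes completes the proof that $T$ is doubly distance resolving on $s_{\mathcal{D}}$, hence that $L$ is doubly resolving; the cardinality $|L|=K-1$ is inherited directly from Lemma~\ref{rsnpc}\ref{nprsb}.
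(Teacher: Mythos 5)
Your proposal is correct and follows essentially the same route as the paper: part~\ref{npdrsa} via the observation that every doubly resolving set is a resolving set plus Lemma~\ref{rsnpc}\ref{nprsa}, and part~\ref{npdrsb} by applying Lemma~\ref{lpsile} with $x=s_{\mathcal{D}}$ and $T=\{s_{\mathcal{A}},s_{\mathcal{B}},s_{\mathcal{C}}\}\cup\{d_0,\dots,d_{v-1}\}$ and then checking the distance classes $\{0,1,2\}$ from $s_{\mathcal{D}}$, which is exactly the paper's case analysis. One harmless slip in an aside: in the co-bipartite case $d_G(s_{\mathcal{D}},s_{\mathcal{A}})=1$ while $d_G(d_i,s_{\mathcal{A}})=2$, so $s_{\mathcal{D}}$ and $d_i$ still produce the identical difference $-1$ on each of the $s_{\mathcal{A}},s_{\mathcal{B}},s_{\mathcal{C}}$ coordinates --- the separation there is not ``automatic'' from the clique edges (it is automatic only in the split case) but still needs the $d_i$ landmarks, which your $T$ contains in all three graph classes, so the argument goes through unchanged.
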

\begin{proof}
By Lemma \ref{rsnpc}\ref{nprsa}, we get \ref{npdrsa} immediately since $L$ is also a resolving set. In order to prove \ref{npdrsb}, by Lemmas \ref{lpsile} and \ref{rsnpc}\ref{nprsb}, it suffices to prove that $L'=\{s_{\mathcal{A}}, s_{\mathcal{B}}, s_{\mathcal{C}}\}\cup \{d_0,d_1,\dots,d_{v-1}\}$ is a doubly distance resolving set on $s_{\mathcal{D}}$.

It is easy to check that for each $u\in V(G)$, $d_G(u,s_{\mathcal{D}})\le 2$. In addition, for the case that $G$ is a co-bipartite graph, $d_G(u,s_{\mathcal{D}})\le 1$. Let $\{u,v\}$ be the pair of vertices with $d_G(u,s_{\mathcal{D}})\neq d_G(v,s_{\mathcal{D}})$. Then there exist only three possibilities:

Case 1: $u=s_{\mathcal{D}}$ and $d_G(v,s_{\mathcal{D}}) = 1$. If $v\in \{d_0,d_1,\dots,d_{v-1}\}$, then $\{u,v\}$ is doubly resolved by $\{u,v\}$. If $v\in \mathcal{A}\cup \mathcal{B}\cup \mathcal{C}$, then without loss of generality, we assume that $v\in \mathcal{A}$. Since $d_G(u,u)-d_G(v,u)=0-1<1-1\le d_G(u,s_{\mathcal{A}})-d_G(v,s_{\mathcal{A}})$, $\{u,v\}$ is doubly resolved by $\{u,s_{\mathcal{A}}\}$. The following situations only happen when $G$ is a co-bipartite graph. If $v\in \{s_{\mathcal{A}}, s_{\mathcal{B}}, s_{\mathcal{C}}\}$, then $\{u,v\}$ is doubly resolved by $\{u,v\}$. If $v\in S$, then $d_G(u,u)-d_G(v,u)=0-1<1-1=d_G(u,s_{\mathcal{A}})-d_G(v,s_{\mathcal{A}})$, i.e. $\{u,v\}$ is doubly resolved by $\{u,s_{\mathcal{A}}\}$.

Case 2: $u=s_{\mathcal{D}}$ and $d_G(v,s_{\mathcal{D}}) = 2$. If $v\in \{s_{\mathcal{A}}, s_{\mathcal{B}}, s_{\mathcal{C}}\}$, then $\{u,v\}$ is doubly resolved by $\{u,v\}$. If $v\in S$, then $d_G(u,u)-d_G(v,u)=0-2<2-2=d_G(u,s_{\mathcal{A}})-d_G(v,s_{\mathcal{A}})$, i.e. $\{u,v\}$ is doubly resolved by $\{u,s_{\mathcal{A}}\}$.

Case 3: $d_G(u,s_{\mathcal{D}}) = 1$ and $d_G(v,s_{\mathcal{D}}) = 2$. If $u\in \mathcal{A}\cup \mathcal{B}\cup \mathcal{C}$, then without loss of generality, we assume that $u\in \mathcal{A}$.
If $v\in S$, then $d_G(u,s_{\mathcal{D}})-d_G(v,s_{\mathcal{D}})=1-2<2-2\le d_G(u,s_{\mathcal{B}})-d_G(v,s_{\mathcal{B}})$, i.e. $\{u,v\}$ is doubly resolved by $\{s_{\mathcal{D}},s_{\mathcal{B}}\}$. If $v\in \{s_{\mathcal{A}}, s_{\mathcal{B}}, s_{\mathcal{C}}\}$, then $d_G(u,s_{\mathcal{D}})-d_G(v,s_{\mathcal{D}})=1-2<1-0\le d_G(u,v)-d_G(v,v)$, i.e. $\{u,v\}$ is doubly resolved by $\{s_{\mathcal{D}},v\}$. Now we assume that $u\in \{d_0,d_1,\dots,d_{v-1}\}$. If $v\in S$, then $d_G(u,s_{\mathcal{D}})-d_G(v,s_{\mathcal{D}})=1-2<2-2\le d_G(u,s_{\mathcal{A}})-d_G(v,s_{\mathcal{A}})$, i.e. $\{u,v\}$ is doubly resolved by $\{s_{\mathcal{D}},s_{\mathcal{A}}\}$. If $v\in \{s_{\mathcal{A}}, s_{\mathcal{B}}, s_{\mathcal{C}}\}$, then $d_G(u,s_{\mathcal{D}})-d_G(v,s_{\mathcal{D}})=1-2<2-0\le d_G(u,v)-d_G(v,v)$, i.e. $\{u,v\}$ is doubly resolved by $\{s_{\mathcal{D}},v\}$.

\end{proof}

% From the above, we get the following theorem.
Note that by Lemmas \ref{gap3dm} and \ref{drsnpc}, there is a 3-dimensional matching $S'\subseteq S$ if and only if $G$ has a doubly resolving set $L$ such that $|L|\le K$. From the above, we get the following theorem.
\begin{theorem}
Given a value $K$ and a graph $G$ that is a split graph, a bipartite graph or a co-bipartite graph, deciding whether $\Psi(G)\le K$ is NP-complete. \qed
\end{theorem}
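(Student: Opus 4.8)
The plan is to dispatch the theorem as a short wrap-up: establish membership in NP directly, and obtain NP-hardness from the polynomial-time reduction already constructed, with the two parts of Lemma \ref{drsnpc} and the gap result of Lemma \ref{gap3dm} closing the equivalence.

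First I would check that the problem lies in NP. Given $G$, $K$, and a candidate set $L$ with $|L|\le K$, one can decide in polynomial time whether $L$ is a doubly resolving set: compute all pairwise distances (for instance by breadth-first search from each vertex), and then for every unordered pair of distinct vertices $\{u,v\}$ test whether some pair $\{x,y\}\subseteq L$ satisfies $d_G(u,x)-d_G(u,y)\neq d_G(v,x)-d_G(v,y)$. This takes $O(|V(G)|^2|L|^2)$ arithmetic operations, so a doubly resolving set of size at most $K$ is a polynomial-size certificate.

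For NP-hardness I would use the construction of $G$ and the value $K=n'+v+5$ exactly as above; it is computable in polynomial time and yields, in the three respective cases, a bipartite graph, a split graph, or a co-bipartite graph. I would then prove the equivalence that there is a $3$-dimensional matching $S'\subseteq S$ if and only if $\Psi(G)\le K$. For the forward direction, a matching $S'\subseteq S$ lifts to a matching $\mathcal{S}'=\bigcup_{i=1}^N S_i'\subseteq\mathcal{S}$ of the copies: since the copies $A_i,B_i,C_i$ are pairwise disjoint, each element of $\mathcal{A}\cup\mathcal{B}\cup\mathcal{C}$ is covered exactly once. Lemma \ref{drsnpc}\ref{npdrsb} then produces a doubly resolving set $L$ with $|L|=K-1\le K$, so $\Psi(G)\le K$. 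For the converse, if $\Psi(G)\le K$ there is a doubly resolving set $L$ with $|L|\le K$, and Lemma \ref{drsnpc}\ref{npdrsa} yields $\mathcal{S}'\subseteq\mathcal{S}$ with $c(\mathcal{S}')\le K+3<n'+\sqrt{n'}-1$; Lemma \ref{gap3dm} then returns a $3$-dimensional matching $S'\subseteq S$.

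Since $3$-dimensional matching is NP-hard and the reduction runs in polynomial time, the decision problem is NP-hard, and together with membership in NP it is NP-complete for all three graph classes at once. There is essentially no remaining obstacle at the level of the theorem: all of the difficulty has been pushed into Lemma \ref{drsnpc}, whose part \ref{npdrsb} is the delicate one, as it must certify that $\{s_{\mathcal{A}},s_{\mathcal{B}},s_{\mathcal{C}}\}\cup\{d_0,\dots,d_{v-1}\}$ is a doubly distance resolving set on $s_{\mathcal{D}}$ uniformly across the split, bipartite, and co-bipartite cases. The only point to keep in view at the top level is precisely that a single pair of lemmas covers all three cases, which it does.
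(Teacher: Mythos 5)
Your proposal is correct and follows essentially the same route as the paper: the paper's proof of this theorem is just the one-line ``From the above,'' meaning exactly the chain you spell out --- NP membership by polynomial-time verification of a candidate set, and NP-hardness by combining the polynomial-time construction of $G$ and $K=n'+v+5$ with Lemma \ref{drsnpc}\ref{npdrsb} (via lifting a matching $S'$ to $\mathcal{S}'$) for one direction and Lemma \ref{drsnpc}\ref{npdrsa} together with Lemma \ref{gap3dm} for the other. You have merely made explicit the wrap-up the paper leaves implicit, including the (correct) observations that the certificate check runs in polynomial time and that the single pair of lemmas covers all three graph classes at once.
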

% \begin{proof}
%     The problem is clearly in NP. By Lemmas \ref{gap3dm} and \ref{drsnpc}, since 3-dimensional matching problem is NP-hard, the problem is also NP-hard. Therefore, the problem is NP-complete.
% \end{proof}

\section{Algorithms and experimental results}\label{cal}

Lindstr\"{o}m's construction of weighing strategy is a very creative method. For the details and the correctness, the interested reader is referred to Section 2.4 in \citep{aigner1988combinatorial}.

Let $T$ be a finite set and $\mathcal{F}\subseteq 2^T$ a collection of subsets. $\mathcal{F}$ is called a (simplicial) complex if $A\in \mathcal{F}$ and $B\subseteq A$ imply $B\in \mathcal{F}$. Recall that $M(n)$ is the minimum number of weighings for $n$ coins. Lindstr\"{o}m proved the following theorem.
\begin{theorem}[\citet{lindstrom1965combinatorial}]
$M(\sum_{A\in \mathcal{F}}|A|)\le |\mathcal{F}|-1$ for every complex $\mathcal{F}$.
\end{theorem}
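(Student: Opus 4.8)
The plan is to build, for the $n=\sum_{A\in\mathcal F}|A|$ coins, an explicit weighing strategy of size $|\mathcal F|-1$ and to prove its correctness by induction on the complex. First I would set up the bookkeeping. Index the $n$ coins by the pairs $(A,a)$ with $a\in A\in\mathcal F$, and index the candidate weighings by the nonempty members of $\mathcal F$; since a nonempty complex contains $\emptyset$, there are exactly $|\mathcal F|-1$ of these. By the definition of a weighing strategy, $S$ works iff the integer-valued map $u\mapsto(u\cdot x)_{x\in S}$ is injective on $\{0,1\}^n$, equivalently iff the only $d\in\{-1,0,1\}^n$ with $\sum_i d_i x_i=0$ for all $x\in S$ is $d=0$. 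Thus the whole problem reduces to choosing the incidence between the coins $(A,a)$ and the weighings $x_B$ so that this \emph{detecting} property holds using only $|\mathcal F|-1$ weighings.

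To run the induction I would peel off a ground-set element $t$. Write $\mathcal F_0=\{A\in\mathcal F:t\notin A\}$ for the deletion and $\mathcal L=\{A\setminus\{t\}:t\in A\in\mathcal F\}$ for the link; both are complexes on the smaller ground set, and the downset property forces $\mathcal L\subseteq\mathcal F_0$. The members split as $|\mathcal F|=|\mathcal F_0|+|\mathcal L|$, and the coins split into three groups: the coins of $\mathcal F_0$; the non-$t$ coins of the $t$-sets, which biject with the coins of $\mathcal L$ via $(A'\cup\{t\},a')\mapsto(A',a')$; and one ``$t$-coin'' $(A'\cup\{t\},t)$ for each $A'\in\mathcal L$. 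Counting gives $n=\sum_{A\in\mathcal F_0}|A|+\sum_{A'\in\mathcal L}|A'|+|\mathcal L|$, so induction supplies detecting families $X_0$ (of size $|\mathcal F_0|-1$) and $X_L$ (of size $|\mathcal L|-1$) for the first two groups, and $(|\mathcal F_0|-1)+(|\mathcal L|-1)+1=|\mathcal F|-1$: exactly one fresh weighing $x_t$ is left over to absorb the $|\mathcal L|$ coins of the third group, with the base case $\mathcal F=\{\emptyset\}$ giving $M(0)=0$.

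The heart of the argument is then to merge $X_0$, $X_L$ and the single weighing $x_t$ into one detecting family. The natural recovery order is to use $x_t$ together with $X_0$ to pin down the $\mathcal F_0$-coins and the $t$-coins, and then use $X_L$ to recover the remaining $\mathcal L$-coins. I expect the main obstacle to be precisely the third group: a single new weighing reads off only the \emph{sum} of the $|\mathcal L|$ $t$-coins, so their mutual separation must be carried by $X_0$. This is feasible only because $\mathcal L\subseteq\mathcal F_0$, which lets each $t$-coin $(A'\cup\{t\},t)$ be attached to the $X_0$-weighing that already ``sees'' $A'$; the delicate point is to arrange this attachment so that the contamination it injects into the $X_0$-equations becomes invertible once $x_t$ fixes the total, i.e. so that no nonzero $d\in\{-1,0,1\}^n$ has all weighing-sums zero. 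Verifying this detecting property over $\{-1,0,1\}$ — rather than over $\mathbb Q$, where a mere rank count would be immediate — is the crux, and is exactly where the downset structure of the complex must be used in full; the explicit incidence realizing it is Lindstr\"om's construction, whose details I would follow as in Section~2.4 of \citep{aigner1988combinatorial}.
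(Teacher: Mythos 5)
Note first that the paper itself contains no proof of this theorem: it is quoted from Lindstr\"om's 1964 paper, and the authors explicitly refer the reader to Section~2.4 of \citep{aigner1988combinatorial} for the construction and its correctness. So your attempt has to stand entirely on its own, and judged that way it has a genuine gap. Your bookkeeping is all correct: the reformulation of a weighing strategy as the triviality of the $\{-1,0,1\}^n$ kernel, the deletion/link split $|\mathcal{F}|=|\mathcal{F}_0|+|\mathcal{L}|$ with $\mathcal{L}\subseteq\mathcal{F}_0$, the coin count $n=\sum_{A\in\mathcal{F}_0}|A|+\sum_{A'\in\mathcal{L}}|A'|+|\mathcal{L}|$, and the weighing count $(|\mathcal{F}_0|-1)+(|\mathcal{L}|-1)+1=|\mathcal{F}|-1$. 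But at the decisive step --- how the $|\mathcal{L}|$ $t$-coins are attached to the rows of $X_0$, and why the resulting system has trivial $\{-1,0,1\}$ kernel --- you write that ``the explicit incidence realizing it is Lindstr\"om's construction, whose details I would follow as in Section~2.4 of \citep{aigner1988combinatorial}.'' That incidence and its verification \emph{are} the theorem; deferring them to the source you are supposed to be reproducing means you have written a proof plan, not a proof.

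Moreover, the induction as you have set it up cannot close with the hypothesis you state. After the merge, the $|\mathcal{F}_0|$ weighings $X_0\cup\{x_t\}$ must detect $\sum_{A\in\mathcal{F}_0}|A|+|\mathcal{L}|$ coins (the $\mathcal{F}_0$-coins plus all $t$-coins), whereas the inductive hypothesis only provides $|\mathcal{F}_0|-1$ weighings detecting $\sum_{A\in\mathcal{F}_0}|A|$ coins, with no control over how those weighings behave once extra coins are grafted onto them; the single fresh weighing $x_t$ reads only the sum of the $t$-coins, so their mutual separation must be carried by rows about which the hypothesis says nothing. (One cannot rescue this by applying the theorem to a larger complex either: adjoining $\{t\}$ to $\mathcal{F}_0$ buys one extra coin, not $|\mathcal{L}|$.) To make an induction of this shape work one must strengthen the invariant --- roughly, that a detecting family can be chosen in which each member of the complex has a designated row pattern capable of absorbing one additional marked coin --- and establishing that strengthened statement over $\{-1,0,1\}$ (where, as you rightly note, a rank count over $\mathbb{Q}$ does not suffice) is exactly the content of Lindstr\"om's construction. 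As it stands, your proposal correctly identifies where the difficulty lies but does not resolve it.
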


For a positive integer $m$, the binary representation can be written to $m=\sum_{i=1}^t 2^{k_i}$. Then let $F_m=\{k_1,k_2,\dots,k_t\}$ with $F_0=\emptyset$. For example, $10=(1010)_2=2^1+2^3$ and $F_{10}=\{1,3\}$. It is easy to know that $\mathcal{F}_m=\{F_0,F_1,\dots,F_{m-1}\}$ is a complex.  Based on  Theorems \ref{lin} and \ref{nn1}, we can construct a doubly resolving set of $Q_n$ with cardinality $|\mathcal{F}|$  for every complex $\mathcal{F}$ and $n\le \sum_{A\in \mathcal{F}}|A|$. Our algorithm for finding upper bounds of $\Psi(Q_n)$ is given in Algorithm \ref{alg}.

\begin{algorithm}
\caption{Finding upper bounds of $\Psi(Q_n)$.}\label{alg}
\KwIn{A positive integer $m$.}
\KwOut{Upper bounds $P(n)$ of $\Psi(Q_n)$ for $n\le \sum_{A\in \mathcal{F}_{m}}|A|$.}
$N := 0$\;
\For{$i:=1$ \KwTo $m-1$}
{
$N_2:=N$\;
$j:=i$\;
\While{$j>0$}
{
\If{$j \bmod 2 = 1$}
{
$N_2 := N_2+1$\;
}
$j := \lfloor j/2 \rfloor$\;
}
\For{$n:=N+1$ \KwTo $N_2$}
{
$P(n):=i+1$\;
}
$N:=N_2$\;

}

\end{algorithm}

We use $\overline{\beta}_n$ and $\overline{\Psi}_n$ to denote the upper bounds of $\beta(Q_n)$ and $\Psi(Q_n)$, respectively. The genetic algorithm (GA), variable neighborhood search (VNS) algorithm and IP-based swapping (IPBS) algorithm were reported in \citep{kratica2009computing}, \citep{mladenovic2012variable} and \citep{hertz2020ipbased}, respectively. Our computing method is Algorithm \ref{alg}. Due to the limitation of the memory space and computing time, the previous results only compute for $n\le 22$. Except for $n=14,16,18$, our upper bounds are same with them (see Table \ref{pn22}). Note that conversely their results actually improved the Lindstr\"{o}m's upper bounds for coin weighing problem in $n=14,16,18$, i.e. $M(14)\le 8$, $M(16)\le 9$ and $M(18)\le 10$. In addition, our upper bound of $\Psi(Q_{28})$ is better than the upper bound of $\beta(Q_{28})$ that founded by IPBS (see Table \ref{pn28}). Recall that $\beta(Q_{n})\le \Psi(Q_{n})$. What is more, when $29 \le n\le 90$, all of our upper bounds of $\Psi(Q_n)$ are not more than the upper bounds of $\beta(Q_n)$ that is calculated by a dynamic programming (DP) procedure in \citep{nikolic2017symmetry} (see Table \ref{pn93}). Besides, some of our upper bounds of $\Psi(Q_n)$ are even better than their upper bounds of $\beta(Q_n)$.

\begin{table}[H]
\begin{minipage}[b]{0.48\textwidth}
\centering

\begin{tabular}{ccccc}
\hline
$n$  & GA & VNS & IPBS & Our  \\ \hline
1  & 2  & -   & -    & 2   \\
2  & 3  & -   & -    & 3   \\
3  & 4  & -   & -    & 4   \\
4  & 4  & -   & -    & 4   \\
5  & 5  & -   & -    & 5   \\
6  & 6  & -   & -    & 6   \\
7  & 6  & -   & -    & 6   \\
8  & 7  & 7   & 7    & 7   \\
9  & 7  & 7   & 7    & 7   \\
10 & 8  & 8   & 8    & 8   \\
11 & 8  & 8   & 8    & 8   \\
12 & 9  & 8   & 8    & 8   \\
13 & 9  & 9   & 9    & 9   \\
14 & 10 & 9   & 9    & 10  \\
15 & 10 & 10  & 10   & 10  \\
16 & 11 & 10  & 10   & 11  \\
17 & 12 & 11  & 11   & 11  \\
18 & -  & -   & 11   & 12  \\
19 & -  & -   & 12   & 12  \\
20 & -  & -   & 12   & 12  \\
21 & -  & -   & 13   & 13  \\
22 & -  & -   & 13   & 13  \\ \hline
\end{tabular}
\caption{$\overline{\Psi}_n$, $n\le 22$}\label{pn22}

\end{minipage}
\begin{minipage}[b]{0.48\textwidth}
\centering
\begin{tabular}{ccc}
\hline
$n$  & $\overline{\beta}_n$ (IPBS) & $\overline{\Psi}_n$ (Our)  \\ \hline
23 & 13                                 & 14                               \\
24 & 14                                 & 14                               \\
25 & 14                                 & 14                               \\
26 & 15                                 & 15                               \\
27 & 15                                 & 15                               \\
28 & 16                                 & 15                               \\ \hline
\end{tabular}
\caption{$\overline{\beta}_n$ and $\overline{\Psi}_n$, $23\le n\le 28$}\label{pn28}
\end{minipage}
\end{table}

\begin{table}[H]
\centering
\begin{tabular}{cccccccccccccc}
\hline
$n$    & 29 & 30 & 31 & 32 & 33 & 34 & 35 & 36 & 37 & 38 & 39 & 40 & 41 \\
$\overline{\beta}_n$ (DP) & 16 & 16 & 16 & 16 & 17 & 18 & 19 & 19 & 20 & 21 & 21 & 22 & 22 \\
$\overline{\Psi}_n$ (Our)  & 16 & 16 & 16 & 16 & 17 & 18 & 18 & 19 & 19 & 20 & 20 & 20 & 21 \\
\hline
$n$    & 42 & 43 & 44 & 45 & 46 & 47 & 48 & 49 & 50 & 51 & 52 & 53 & 54 \\
$\overline{\beta}_n$ (DP) & 23 & 23 & 23 & 24 & 24 & 25 & 25 & 26 & 27 & 28 & 28 & 29 & 30 \\
$\overline{\Psi}_n$ (Our)  & 21 & 22 & 22 & 22 & 23 & 23 & 23 & 24 & 24 & 24 & 24 & 25 & 25 \\
\hline
$n$    & 55 & 56 & 57 & 58 & 59 & 60 & 61 & 62 & 63 & 64 & 65 & 66 & 67 \\
$\overline{\beta}_n$ (DP) & 30 & 30 & 31 & 31 & 32 & 32 & 32 & 32 & 32 & 32 & 32 & 32 & 32 \\
$\overline{\Psi}_n$ (Our)  & 26 & 26 & 26 & 27 & 27 & 27 & 28 & 28 & 28 & 28 & 29 & 29 & 29 \\
\hline
$n$    & 68 & 69 & 70 & 71 & 72 & 73 & 74 & 75 & 76 & 77 & 78 & 79 & 80 \\
$\overline{\beta}_n$ (DP) & 32 & 32 & 32 & 32 & 32 & 32 & 32 & 32 & 32 & 32 & 32 & 32 & 32 \\
$\overline{\Psi}_n$ (Our)  & 30 & 30 & 30 & 30 & 31 & 31 & 31 & 31 & 32 & 32 & 32 & 32 & 32 \\
\hline
$n$    & 81 & 82 & 83 & 84 & 85 & 86 & 87 & 88 & 89 & 90 & 91 & 92 & 93 \\
$\overline{\beta}_n$ (DP) & 33 & 34 & 35 & 35 & 36 & 37 & 37 & 38 & 38 & 39 & -  & -  & -  \\
$\overline{\Psi}_n$ (Our)  & 33 & 34 & 34 & 35 & 35 & 36 & 36 & 36 & 37 & 37 & 38 & 38 & 38\\
\hline
\end{tabular}
\caption{$\overline{\beta}_n$ and $\overline{\Psi}_n$, $29\le n\le 93$}\label{pn93}
\end{table}

Recall that \citet{chartrand2000resolvability} and \citet{kratica2009computing} have given the 0--1 integer linear programming formulations for the metric dimension problem and the minimal doubly resolving set problem respectively. Using the similar method, we give the 0--1 integer linear programming formulations for computing $\phi(G,s)$.

For a doubly distance resolving set $S$ of $G$ on $s$, let
\begin{equation*}
    x_t=\begin{cases}
        1 & \text{if } t\in S\\
        0 & \text{otherwise}.
    \end{cases}
\end{equation*}
Let $T(G,s)=\{(u,v):d_G(u,s)\neq d_G(v,s)\}$. Let
\begin{equation*}
    A_{(u,v),(s,t)}=\begin{cases}
        1 & \text{if } d_G(u,s)-d_G(u,t)\neq d_G(v,s)-d_G(v,t)\\
        0 & \text{otherwise}.
    \end{cases}
\end{equation*}
The following 0--1 linear programming model of calculating the value of $\phi(G,s)$ can be formulated as:
\begin{alignat}{4}
    & \min         & \quad & \sum_{t\in V(G)} x_t  & \quad &\\\label{first}
    & \text{s.t.}  &       & \sum_{t\in V(G)} A_{(u,v),(s,t)}x_t\ge 1  &       &\forall (u,v)\in T(G,s)\\
    &              &       & x_t \in \{0,1\}   &       &\forall t\in V(G). \label{last}
\end{alignat}
By Lemma \ref{small}, it is easy to see that each feasible solution of \eqref{first}-\eqref{last} defines a doubly distance resolving set $S$ of $G$ on $s$, and vice versa.

We use 0--1 linear programming model to compute $\beta(F_n)$, $\phi(F_n)$ and $\Psi(F_n)$ for $n\le 10$ by Gurobi Optimizer (see Table \ref{fn10}). Note that the values of $\beta(F_n)$ for $n\le 8$ have computed in \citep{bailey2015metric}.
\begin{table}[H]
    \centering
    \begin{tabular}{cccccccccc}
        \hline
    $n$     & 2 & 3 & 4 & 5 & 6 & 7 & 8  & 9 & 10 \\
    $\beta(F_n)$ & 1 & 3 & 6 & 4 & 8 & 6 & 11 & 7 & $\le$ 14 \\
    $\Psi(F_n)$ & 2 & 3 & 6 & 5 & 9 & 6 & 11 & 7 & $\le$ 14 \\
    $\phi(F_n)$ & 1 & 1 & 3 & 2 & 5 & 3 & 6  & 3 & $\le$ 8 \\
    \hline
    \end{tabular}
    \caption{$\beta(F_n)$, $\phi(F_n)$ and $\Psi(F_n)$, $n\le 10$}\label{fn10}
\end{table}

\section{Open problems}\label{con}

In Section \ref{cube}, we  proved that $\Psi(H_{n,q})\le \beta(H_{n,q})+q-1$. However, we do not know whether it is best possible. We pose the following question.
\begin{question}
For every positive integer $n$, determine the smallest positive value $f(q)$, such that $\Psi(H_{n,q})\le \beta(H_{n,q})+f(q)$.
\end{question}

By the values of $\beta(F_n)$ and $\Psi(F_n)$ for $n\le 10$ in Table \ref{fn10}, we raise the following conjecture.
\begin{conjecture}
For every integer $n\ge 2$, $\beta(F_n)\le \Psi(F_n) \le \beta(F_n)+1$.
\end{conjecture}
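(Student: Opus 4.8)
The plan is to treat the two inequalities separately, since they are of completely different difficulty. The left inequality $\beta(F_n)\le \Psi(F_n)$ is free: for every graph a doubly resolving set is in particular a resolving set (if $d(u,S)-d(v,S)\neq \overrightarrow{c}$ for all constants $c$, then in particular $d(u,S)-d(v,S)\neq \overrightarrow{0}$), exactly as recorded in the introduction. So all the content lies in the upper bound $\Psi(F_n)\le \beta(F_n)+1$.

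My first instinct for the upper bound is Theorem \ref{psile}, which gives $\Psi(F_n)\le \beta(F_n)+\phi(F_n)$. Unfortunately Lemmas \ref{ddrsodd} and \ref{ddrseven} only bound $\phi(F_n)$ by roughly $n/2$ and $n-1$, so this route yields $\Psi(F_n)\le \beta(F_n)+O(n)$ and cannot possibly reach the constant $+1$. I would therefore abandon the $\phi$-machinery and argue directly at the level of ``bad pairs.'' Using vertex-transitivity of $F_n$ together with Lemma \ref{sym}\ref{b}, fix a minimum resolving set $S$ with $\foldzero\in S$. A pair $\{\fold{u},\fold{v}\}$ that $S$ resolves but does not doubly resolve is precisely one for which $d_{F_n}(\fold{u},\fold{s})-d_{F_n}(\fold{v},\fold{s})$ equals a single nonzero constant $c$ for all $\fold{s}\in S$; for any such pair, adjoining one vertex $\fold{z}$ with $d_{F_n}(\fold{u},\fold{z})-d_{F_n}(\fold{v},\fold{z})\neq c$ forces $S\cup\{\fold{z}\}$ to doubly resolve $\{\fold{u},\fold{v}\}$, since then pairing $\fold{z}$ with any $\fold{s}\in S$ gives distinct differences $c\neq c'$. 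The whole problem thus reduces to producing a single vertex $\fold{z}$ that simultaneously breaks the constant difference of every bad pair of $S$.

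The heart of the proof — and the step I expect to be the main obstacle — is showing that one universal breaker $\fold{z}$ always exists. This demands a structural description of the bad pairs of a minimum resolving set of $F_n$, which is delicate because the folded distance $d_{F_n}=\min\{d_{Q_n},\,n-d_{Q_n}\}$ is nonlinear: unlike the hypercube case, where the identity $d_{Q_n}(u,\overrightarrow{0})-d_{Q_n}(u,x)=2(u\cdot x)-x\cdot\overrightarrow{1}$ underlying Theorem \ref{lin} converts everything into the linear coin-weighing problem, the outer $\min$ destroys that linearity and with it the clean equivalence to a weighing strategy. I would try to push the folding map $f\colon V(F_n)\to V(Q_n)$ of Lemma \ref{ge}, together with the partial inverses used in Lemmas \ref{oddfold} and \ref{evenfold}, far enough to transfer the analysis to $Q_n$, where the coin-weighing machinery is available. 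One must be careful, though: even in the hypercube these tools give only $\Psi(Q_n)-\beta(Q_n)\le 2$, since Theorem \ref{lin} gives $\Psi(Q_n)=M(n)+1$ while Theorem \ref{sebh02004} gives $\beta(Q_n)\ge M(n)-1$. Reaching the sharper gap $1$ for $F_n$ will therefore need more than a direct transfer.

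Finally I would organize the argument by the parity of $n$, mirroring the split in Lemmas \ref{oddfold}--\ref{evenfold}, and use the exact values in Table \ref{fn10} as a base for any inductive scheme built on $\beta(F_n)=\beta(Q_n)$ for odd $n$ and $\beta(F_{n+1})\le 2\beta(Q_n)$ for even $n$. My honest expectation is that the constant gap is genuinely hard, precisely because the linear-algebraic leverage that makes the hypercube case tractable is unavailable here; a proof may well require a new invariant measuring exactly how far a resolving set of $F_n$ is from being doubly resolving, rather than the lossy estimate $\Psi\le\beta+\phi$.
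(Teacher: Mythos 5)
First, a point of status: the statement you are trying to prove is posed in the paper as a \emph{conjecture}, supported only by the computed values $\beta(F_n)$ and $\Psi(F_n)$ for $n\le 10$ in Table \ref{fn10}; the paper contains no proof, so there is nothing to compare your argument against except the conjecture's own evidence. Your proposal, by your own admission, is also not a proof. The trivial direction $\beta(F_n)\le \Psi(F_n)$ is fine, and your reduction of the upper bound is correct as far as it goes: if a resolving set $S$ fails to doubly resolve $\{\fold{u},\fold{v}\}$, then $d_{F_n}(\fold{u},\fold{s})-d_{F_n}(\fold{v},\fold{s})$ is a constant $c\neq 0$ over $\fold{s}\in S$, and any $\fold{z}$ breaking that constant makes $S\cup\{\fold{z}\}$ doubly resolve the pair (doubly resolving is monotone under superset, so no new bad pairs appear). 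But the entire content of the conjecture is then concentrated in the unestablished claim that some metric basis of $F_n$ admits a \emph{single} vertex $\fold{z}$ breaking all bad pairs simultaneously, and you offer no argument for it. You correctly diagnose why Theorem \ref{psile} cannot help (Lemmas \ref{ddrsodd} and \ref{ddrseven} give $\phi(F_n)=\Theta(n)$ at best from above) and why the linear coin-weighing translation behind Theorem \ref{lin} collapses under the outer $\min$ in $d_{F_n}$; but diagnosing the obstruction is not overcoming it.

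Two further cautions about the plan itself. First, your universal-breaker scheme is sufficient but not necessary: a doubly resolving set of cardinality $\beta(F_n)+1$ need not contain a resolving set of cardinality $\beta(F_n)$ (deleting a vertex from a doubly resolving set can destroy the resolving property, since a pair may be resolved only by the deleted vertex), so restricting attention to sets of the form $S\cup\{\fold{z}\}$ with $S$ a metric basis could in principle make the conjecture unprovable by this route even if it is true, and conversely a failure of universal breakers for every metric basis would not refute it. Second, your calibration against the hypercube is apt and should temper expectations: even for $Q_n$, Theorems \ref{lin} and \ref{sebh02004} yield only $\Psi(Q_n)\le \beta(Q_n)+2$, and whether the gap is always at most $1$ there is itself open; since $F_n$ lacks even that linear structure, your closing assessment --- that a genuinely new invariant is needed rather than the lossy bound $\Psi\le\beta+\phi$ --- is the honest conclusion, and it means the proposal should be read as a research plan with its central step missing, not as a proof.
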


We observe that $\beta(F_{2n})\approx 2\beta(F_{2n-1})$, $\Psi(F_{2n})\approx 2\Psi(F_{2n-1})$ and $\phi(F_{2n})\approx 2\phi(F_{2n-1})$ when $n$ is small by Table \ref{fn10}. Besides, it seems to remain true when $n$ is large by comparing Lemma \ref{oddfold} with Lemma \ref{evenfold}, as well as comparing Lemma \ref{ddrsodd} with Lemma \ref{ddrseven}. We pose the following question and conjecture that the values are 2.

\begin{question}
    Determine the values of 
    \[\lim_{n\rightarrow +\infty} \frac{\beta(F_{2n})}{\beta(F_{2n-1})}, \lim_{n\rightarrow +\infty} \frac{\Psi(F_{2n})}{\Psi(F_{2n-1})}, \lim_{n\rightarrow +\infty}  \frac{\phi(F_{2n})}{\phi(F_{2n-1})}.\]
\end{question}

\bibliographystyle{elsarticle-num-names-alpha}
\bibliography{ref}

\end{document}